\title{Runs in Random Sequences over Ordered Sets}
\author{Tanner Reese \\ \small{University of Arizona}}
\newtheorem{thm}{Theorem}
\newtheorem{lem}[thm]{Lemma}
\newtheorem{prop}[thm]{Proposition}
\newtheorem{coro}[thm]{Corollary}
\theoremstyle{definition}
\newtheorem{defn}{Definition}
\newcommand\abs[1]{\left|#1\right|}
\newcommand\norm[1]{\left\|#1\right\|}
\newcommand\prob[2][]{\mathbb{P}_{#1}\left(#2\right)}
\newcommand\probWhen[3][]{\mathbb{P}_{#1}\left(#2 \,\middle|\, #3\right)}
\newcommand\expect[2][]{\mathbb{E}_{#1}\left[#2\right]}
\newcommand\expectWhen[3][]{\mathbb{E}_{#1}\left[#2 \,\middle|\, #3\right]}
\newcommand\var[2][]{\mathrm{var}_{#1}\left(#2\right)}
\newcommand\varWhen[3][]{\mathrm{var}_{#1}\left(#2 \,\middle|\, #3\right)}
\newcommand\borel[1]{\mathcal{B}_{#1}}
\newcommand\dr{\; \mathrm{d}}
\newcommand\closed[1]{\accentset{\bullet}{#1}}
\newcommand\open[1]{\accentset{\circ}{#1}}
\begin{document}
\maketitle

\begin{abstract}
We determine the distributions of lengths of runs in random sequences of elements from a
totally ordered set (total order) or partially ordered set (partial order).
In particular, we produce novel formulae for the expected value, variance, and probability generating function (PGF)
of such lengths in the case of an arbitrary total order.
Our focus is on the case of distributions with both atoms and diffuse (absolutely or singularly continuous) mass
which has not been addressed in this generality before.
We also provide a method of calculating the PGF of run lengths for countably series-parallel partial orders.
Additionally, we prove a strong law of large numbers for the distribution of run lengths in a particular realization of an infinite sequence.
\end{abstract}
 
\section{Introduction}
\label{sect:intro}

A \textbf{run} in a sequence $ \{a_i\}_{i=0}^\infty $ is a maximal subsequence of consecutive monotonic elements.
We say $ \langle a_i, \ldots, a_{i+n-1} \rangle $ is a \textbf{strict run} when
\[ \begin{split}
a_{i-1} &\nless a_i < a_{i+1} < \ldots < a_{i+n-1} \nless a_{i+n} \quad \text{ or } \\
a_{i-1} &\ngtr a_i > a_{i+1} > \ldots > a_{i+n-1} \ngtr a_{i+n}
\end{split} \]
and $ \langle a_i, \ldots, a_{i+n-1} \rangle $ is a \textbf{non-strict run} when
\[ \begin{split}
a_{i-1} &\nleq a_i \leq a_{i+1} \leq \ldots \leq a_{i+n-1} \nleq a_{i+n} \quad \text{ or } \\
a_{i-1} &\ngeq a_i \geq a_{i+1} \geq \ldots \geq a_{i+n-1} \ngeq a_{i+n}
\end{split} \]
where $ a_{i-1} $ or $ a_{i+n} $ may not exist if the run starts at zero or extends to infinity.
Here, we say the runs begin at index $ i $ and have length $ n $ (if the run extends to infinity we say $ n = \infty $).
When the values are ascending (resp.\ non-descending), the run is a strict (resp.\ non-strict) \textbf{run up}
and if the values are descending (resp.\ non-ascending) then it is a strict (resp.\ non-strict) \textbf{run down}.
Because runs up and down may be swapped by reversing the ordering relation,
we will just consider runs up for the remainder of the paper.
\medskip

Suppose $ T $ is a partial order and $ \mu $ is a probability measure on $ T $.
Let $ \{X_i\}_{i=0}^\infty $ be a sequence of i.i.d.\ random variables
on the probability space $ \langle \Omega, \mathcal{F}, \mathbb{P} \rangle $ distributed according to $ \mu $.
We are interested in the distribution of the lengths of runs in the sequence $ X_0, X_1, \ldots $
particularly when $ \mu $ has atoms and/or $ T $ is a countably series-parallel partial order
(definition in \cref{sub:series-parallel}).
\medskip

Runs have been studied extensively in the context of permutations beginning with Andr\'e \cite{DAnd}.
Of particular interest has been the enumeration of permutations with a given number of runs
and the generating functions associated with these numbers \cite{Can, Ma, Chen}.
A general treatment of runs and other patterns in permutations can be found in B\'ona's book \cite{Bon}
with a review of probabilistic methods in chapter six.
The distribution of the length of runs in permutations can be used to calculate
the distribution of the length of runs in a sequence when $ \mu $ is diffuse (continuous).
However, since permutations do not allow duplicate entries,
the behavior of runs in permutations will not reflect the behavior of
runs in sequences where $ \mu $ contains atoms (and perhaps diffuse mass).
\medskip

Runs in random sequences are also closely related to ascents, descents,
and records (running maxima and minima) which have been explored extensively.
The asymptotic theory of order statistics has been well studied.
For example, a theorem of R\'enyi \cite{Ren} regarding records
can be used to calculate the distribution of run lengths
when $ \mu $ is diffuse (continuous).
However, there are fewer results in the non-diffuse case.
Existing theory for records in the non-diffuse case
is focused on the mean, variance, and position of records for discrete distributions,
but does not address runs or mixed distributions
(see \cite{Key}, chapter 6 of \cite{AN}, or for a survey \cite{Nag}).
In this paper, we will be considering not just the discrete case,
but cases in which the distribution contains both discrete and continuous parts.
We will also consider a broad class of partial orders which we call countably series-parallel
(see \cref{sub:series-parallel} for a definition).
\medskip

Research into runs in permutations has found practical applications in computer science.
The number of runs of a given length in a permutation was used as a test of randomness in section 3.3.2 of \cite{Knu2}.
Runs and their lengths are also important for sorting algorithms as described in section 5.1.3 of \cite{Knu3}.
Given these applications, it seems valuable to investigate runs in the context of i.i.d.\ random sequences
where the distribution may contain atoms.
\medskip

We will now define the run length as a random variable.
For each index $ i \geq 0 $, we define $ \open{S}_i $ (resp.\ $ \closed{S}_i $)
as the event that a strict (resp.\ non-strict) run begins at $ i $.
More precisely, the events will be
\[ \begin{split}
\open{S}_i := \begin{cases}
\Omega & \text{ when } i = 0 \\
\{X_{i-1} \nless X_i\} & \text{ when } i \geq 1
\end{cases}
\quad \text{ and } \quad
\closed{S}_i := \begin{cases}
\Omega & \text{ when } i = 0 \\
\{X_{i-1} \nleq X_i\} & \text{ when } i \geq 1
\end{cases}.
\end{split} \]
We also define the random variable $ \open{N}_i $ (resp.\ $ \closed{N}_i $)
as the length of the strict (resp.\ non-strict) run beginning at $ i $ assuming one is present.
More precisely, for any $ n \in \{1, 2, \ldots, \infty\} $,
\[ \begin{split}
\left\{ \open{N}_i = n \right\} &:= \begin{cases}
\{X_0 < \ldots < X_{n-1} \nless X_{i+n}\} & \text{ when } i = 0 \text{ and } n < \infty \\
\{X_0 < X_1 < \ldots\} & \text{ when } i = 0 \text{ and } n = \infty \\
\{X_{i-1} \nless X_i < \ldots < X_{i+n-1} \nless X_{i+n} \} & \text{ when } i \geq 1 \text{ and } n < \infty \\
\{X_{i-1} \nless X_i < X_{i+1} < \ldots\} & \text{ when } i \geq 1 \text{ and } n = \infty
\end{cases} \\
\left\{ \closed{N}_i = n \right\} &:= \begin{cases}
\{X_0 \leq \ldots \leq X_{n-1} \nleq X_{i+n}\} & \text{ when } i = 0 \text{ and } n < \infty \\
\{X_0 \leq X_1 \leq \ldots\} & \text{ when } i = 0 \text{ and } n = \infty \\
\{X_{i-1} \nleq X_i \leq \ldots \leq X_{i+n-1} \nleq X_{i+n} \} & \text{ when } i \geq 1 \text{ and } n < \infty \\
\{X_{i-1} \nleq X_i \leq X_{i+1} \leq \ldots\} & \text{ when } i \geq 1 \text{ and } n = \infty
\end{cases}.
\end{split} \]
Notice that $ \open{N}_i $ and $ \closed{N}_i $ are \emph{not} defined on the entire probability space $ \Omega $.
Instead, they are restricted to $ \open{S}_i $ and $ \closed{S}_i $, respectively.
In order for $ \open{N}_i $ and $ \closed{N}_i $ to be well-defined, $ \{X_0 < X_1\} $ must be measurable.
We will assume that this is the case throughout the rest of the paper.
The probability generating functions (PGFs) of $ \open{N}_i $ and $ \closed{N}_i $ are the formal power series
\[
G_{\open{N}_i}(Z) := \sum_{n=0}^\infty \probWhen{\open{N}_i = n}{\open{S}_i} Z^n
\quad \text{ and } \quad
G_{\closed{N}_i}(Z) := \sum_{n=0}^\infty \probWhen{\closed{N}_i = n}{\closed{S}_i} Z^n.
\]
Our main interest in this paper will be calculating the PGFs, $ G_{\open{N}_i} $ and $ G_{\closed{N}_i} $,
and from these determining the means and variances of the run lengths.
Because of the generality of $ T $ and $ \mu $, there are many examples of random sequences
that can be modelled this way.
\begin{itemize}
\item Suppose we have a block of radioactive material
and we let $ X_i $ be the amount of time between the $ i $-th and $ (i+1) $-th decay.
We can take $ T = \mathbb{R}_{\geq 0} $ and $ \mu $ is exponential with $ \mu([0, t)) = 1 - e^{-\lambda t} $.
Then, $ \open{N}_i $ (resp.\ $ \closed{N}_i $) is the number of decay events starting with the $ i $-th
for which the amount of time between decays is strictly increasing (resp.\ non-decreasing).

\item Suppose we have an evenly weighted $ n $-sided die.
We let $ X_i $ represent the value of the $ i $-th roll.
In this case, $ T = \{1, \ldots, n\} $ and $ \mu(\{k\}) = \frac{1}{n} $ for each $ k \in T $.
Then, $ \open{N}_i $ (resp.\ $ \closed{N}_i $) is the number of rolls starting with the $ i $-th
for which the value is strictly increasing (resp.\ non-decreasing).

\item Suppose we have a dart board with a bullseye.
For any two throws that land outside the bullseye, we say the one that is closer to the bullseye is better.
However, for any throws landing in the bullseye, we say they are all equally good.
The total order $ T $ can be described as $ (0, 1] $.
Here, $ 1 $ represents a throw landing in the bullseye and $ (0, 1) $ represents throws landing outside
with $ x > y $ when $ x $ is closer to the bullseye.
The probability measure $ \mu $ assigns mass $ p \in (0, 1) $ to the bullseye $ \mu(\{1\}) = p $
and the remaining mass is uniformly distributed over $ (0, 1) $ so $ \mu(A) = (1 - p) m(A) $
for any Borel set $ A $ where $ m $ is the Lebesgue measure.
Then, $ \open{N}_i $ (resp.\ $ \closed{N}_i $) is the number of throws starting with the $ i $-th throw
for which each throw is better (resp.\ not worse) than the previous.

\item Suppose there is a light bulb socket
whose bulb has a probability $ p \in (0, 1) $ of going out each day.
We keep track of the number of days each light bulb lasts.
If we let $ X_i $ be the number of days the $ i $-th bulb lasts
then $ T = \{1, 2, \ldots\} $ and $ \mu $ is geometric with $ \mu(\{k\}) = p (1 - p)^{k-1} $.
Then, $ \open{N}_i $ (resp.\ $ \closed{N}_i $) is the number of bulbs starting with the $ i $-th bulb
for which every bulb lasts longer than (resp.\ at least as long as) the previous.

\item Suppose one has an evenly weighted $ 2n $-sided die.
We want to require that runs are not just increasing (non-decreasing),
but that the elements of a run are all of the same parity.
That is the sequence $ 1, 3, 7, 8, 2, 4, 5, 9 $ would have runs $ 1, 3, 7 \;|\; 8 \;|\; 2, 4 \;|\; 5, 9 $.
This can be achieved by taking $ T = \{1, 2, \ldots, 2n\} $ to be a partial order where
\[ 1 < 3 < \ldots < 2n - 1 \quad \text{ and } \quad 2 < 4 < \ldots < 2n \]
and $ i, j \in T $ are incomparable whenever $ i $ and $ j $ are of different parity.
As above, we take $ \mu(\{k\}) = \frac{1}{2n} $ for each $ k \in T $.
Then, $ \open{N}_i $ (resp.\ $ \closed{N}_i $) represents the number of rolls starting with the $ i $-th roll
for which the value is strictly increasing (resp.\ non-decreasing) and the parity is the same.
\end{itemize}
We will present a method of calculating the PGFs
for $ \open{N}_i $ and $ \closed{N}_i $ in all of these cases.
The PGFs, means, and variances for these examples can be found in \cref{sect:examples}.
This method is applicable whenever $ T $ is countably series-parallel (which includes total orders)
and $ \mu $ is a probability measure (potentially containing atoms).
\medskip

I would like to thank Sunder Sethuraman for helping me with the editing of this paper.
The writing of this paper was partially supported by a Galileo Circle Scholarship from the College of Science at the University of Arizona.
\medskip

\section{Results}

This section contains the statements of the theorems and corollaries.
Their proofs can be found in the later sections.
First, we will state our primary results for total orders
and then, in \cref{sub:series-parallel}, we will consider partial corders.

\subsection{Total Orders}
\label{sub:total-ords}

We say $ x \in T $ is an \textbf{atom} of $ \mu $ if $ \mu(\{x\}) > 0 $.
Let $ \mathcal{A} $ be the set of atoms of $ \mu $.
For each $ \alpha \in \mathcal{A} $, we say $ m_\alpha = \mu(\{\alpha\}) $.
The mass of $ \mu $ not containing atoms is the \textbf{diffuse mass}, $ m_d = \mu(T \setminus \mathcal{A}) $.
We will be considering $ \mu $ that contain both atoms and diffuse mass.
In particular, when $ T = \mathbb{R} $, this means that $ \mu $
may contain discrete, singularly continuous, and absolutely continuous sections.
If there exists $ x \in T $ such that $ \mu(\{x\}) = \mu(T) $ then we say $ \mu $ is \textbf{degenerate}
and otherwise $ \mu $ is \textbf{non-degenerate}.

\begin{thm} \label{thm:total-length-pgf}
If $ T $ is a total order and $ \mu $ is a non-degenerate probability measure on $ T $ then
for all $ z \in \mathbb{C} \setminus \{0\} $,
the PGFs of $ \open{N}_i $ and $ \closed{N}_i $ will fulfill
\begin{equation} \label{eq:total-init-length-pgf}
G_{\open{N}_0}(z) = \frac{1}{z} \left(1 + (z - 1) e^{m_d z}
\prod_{\alpha \in \mathcal{A}} (1 + m_\alpha z)
\right)
\quad \text{ and for } |z| \leq 1, \quad
G_{\closed{N}_0}(z) = \frac{1}{z} \left(1 + (z - 1) e^{m_d z}
\prod_{\alpha \in \mathcal{A}} \frac{1}{1 - m_\alpha z}
\right)
\end{equation}
\begin{equation} \label{eq:total-length-pgf}
G_{\open{N}_i}(z) = 1 + \frac{2 (z - 1)}{z \left(1 + \sum_{\alpha \in \mathcal{A}} m_\alpha^2 \right)} G_{\open{N}_0}(z)
\quad \text{ and for } |z| \leq 1, \quad
G_{\closed{N}_i}(z) = 1 + \frac{2 (z - 1)}{z \left(1 - \sum_{\alpha \in \mathcal{A}} m_\alpha^2 \right)} G_{\closed{N}_0}(z)
\text{ when } i \geq 1.
\end{equation}
\end{thm}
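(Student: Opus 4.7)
The plan is to first compute the ``at-least-$n$'' probabilities $p_n^< := \mathbb{P}(X_0 < X_1 < \cdots < X_{n-1})$ and $p_n^\leq := \mathbb{P}(X_0 \leq X_1 \leq \cdots \leq X_{n-1})$, assemble their ordinary generating functions $P^<(z) := \sum_n p_n^< z^n$ and $P^\leq(z) := \sum_n p_n^\leq z^n$, and then read off the four PGFs. Since $\{\open{N}_0 = n\} = \{X_0 < \cdots < X_{n-1}\} \setminus \{X_0 < \cdots < X_n\}$, we have $\mathbb{P}(\open{N}_0 = n) = p_n^< - p_{n+1}^<$, and a short power-series manipulation yields $G_{\open{N}_0}(z) = (1 + (z-1) P^<(z))/z$, with the analogous identity for $G_{\closed{N}_0}$.

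The main step is computing $P^<$ and $P^\leq$ by exchangeability. For the strict case, the events $\{X_{\sigma(0)} < \cdots < X_{\sigma(n-1)}\}$, over $\sigma \in S_n$, are equiprobable and almost surely partition the event that the $X_i$ are all distinct, so $n!\,p_n^< = \mathbb{P}(X_0, \ldots, X_{n-1} \text{ all distinct})$. Summing over which subset $S \subseteq \mathcal{A}$ of atoms appears (each at most once) and the number of diffuse draws $k = n - |S|$ gives $p_n^< = \sum_{k + |S| = n} (m_d^k / k!) \prod_{\alpha \in S} m_\alpha$, which sums to $P^<(z) = e^{m_d z} \prod_{\alpha} (1 + m_\alpha z)$. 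For the non-strict case the events over $\sigma$ are no longer disjoint when ties occur, but $\sum_\sigma \indic\{X_{\sigma(0)} \leq \cdots \leq X_{\sigma(n-1)}\}$ equals the number of permutations that sort the realized multiset, namely $\prod_\alpha R_\alpha!$, where $R_\alpha$ is the multiplicity of atom $\alpha$ (diffuse values being almost surely distinct). Taking expectations and expanding the multinomial type-distribution yields $n!\,p_n^\leq = \mathbb{E}[\prod_\alpha R_\alpha!] = \sum (n!/k!)\, m_d^k \prod_\alpha m_\alpha^{r_\alpha}$, hence $P^\leq(z) = e^{m_d z} \prod_\alpha (1 - m_\alpha z)^{-1}$; the product converges for $|z| \leq 1$ precisely because $\mu$ is non-degenerate. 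Substituting into the two $G_{\,\cdot\,N_0}$ formulas above gives \cref{eq:total-init-length-pgf}.

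For $i \geq 1$, the key observation is that inclusion--exclusion gives $\mathbb{P}(\open{S}_i \cap \{\open{N}_i \geq n\}) = \mathbb{P}(X_{i-1} \nless X_i < X_{i+1} < \cdots < X_{i+n-1}) = p_n^< - p_{n+1}^<$, obtained by subtracting the sub-event $\{X_{i-1} < X_i < \cdots\}$ from $\{X_i < \cdots < X_{i+n-1}\}$. Meanwhile, $\mathbb{P}(X_0 = X_1) = \sum_\alpha m_\alpha^2$ together with the symmetry $\mathbb{P}(X_0 < X_1) = \mathbb{P}(X_0 > X_1)$ gives $\mathbb{P}(\open{S}_i) = (1 + \sum_\alpha m_\alpha^2)/2$ and $\mathbb{P}(\closed{S}_i) = (1 - \sum_\alpha m_\alpha^2)/2$. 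Dividing and telescoping the resulting tail probabilities into a power series produces $G_{\open{N}_i}(z) = 1 + \frac{2(z-1)}{z(1 + \sum_\alpha m_\alpha^2)} G_{\open{N}_0}(z)$, and the non-strict identity follows by the same argument with $p^\leq$ in place of $p^<$.

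I expect the main obstacle to be the exchangeability count in the non-strict case: one must justify that $\sum_\sigma \indic\{X_{\sigma(0)} \leq \cdots \leq X_{\sigma(n-1)}\} = \prod_\alpha R_\alpha!$ almost surely (handling the null sets where diffuse values coincide), and then push $\mathbb{E}[\prod_\alpha R_\alpha!]$ through the multinomial expansion to cancel the $\prod_\alpha r_\alpha!$ factors and produce the clean infinite product appearing in $P^\leq$. The convergence of that product and the analyticity of the expressions in \cref{eq:total-init-length-pgf} on the stated domains both hinge on the non-degeneracy hypothesis $\max_\alpha m_\alpha < 1$.
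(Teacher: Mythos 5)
Your proposal is correct, and while the first and last steps coincide with the paper's route, the central computation is done differently. The reduction of the four PGFs to the generating functions $ P^<(z) = \sum_n \prob{X_0 < \ldots < X_{n-1}} z^n $ and $ P^\leq(z) $ via telescoping, together with the identity $ \prob{\open{S}_i \cap \{\open{N}_i \geq n\}} = p^<_n - p^<_{n+1} $ and $ \prob{\open{S}_i} = \tfrac12 (1 + \sum_\alpha m_\alpha^2) $, is exactly the content of \cref{thm:length-pgf} (your $ P^<, P^\leq $ are the run functions $ \open{P}_\mu, \closed{P}_\mu $, and your formula for $ \prob{\open{S}_i} $ is the paper's $ 1 - L_2 $ evaluated via \cref{lem:total-coef2}). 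Where you genuinely diverge is in evaluating these generating functions for a total order, i.e.\ the content of \cref{prop:total-run-funcs}: the paper writes $ T $ as a series composition $ R_0 \otimes \{\alpha_1\} \otimes R_1 \otimes \cdots $ of diffuse intervals and single atoms, proves that run functions multiply under series composition (\cref{lem:fin-series-func}), and then handles countably many atoms by a uniform-convergence limit of measures (\cref{prop:func-limit}). You instead compute the coefficients in one shot by exchangeability: $ n!\, p_n^< = \prob{X_0, \ldots, X_{n-1} \text{ distinct}} $, and $ n!\, p_n^\leq = \expect{\prod_\alpha R_\alpha!} $ because the number of permutations sorting the sample into non-decreasing order is the product of the multiplicities' factorials; the multinomial expansion then cancels the $ \prod_\alpha r_\alpha! $ and yields the exponential and the geometric factors directly. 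Your route is shorter and more self-contained for the totally ordered case (the strict half is essentially the paper's \cref{lem:total-diffuse-func} extended to atomic measures), at the price of having to justify the a.s.\ counting identity and the term-by-term expansion of the infinite product when $ \mathcal{A} $ is countably infinite --- both of which go through by non-negativity and Tonelli, as you note. The paper's heavier machinery is not wasted on its end, since the series/parallel composition lemmas are reused for \cref{thm:series-parallel}, which your argument does not touch.
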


From the PGFs, it is possible to immediately calculate the means and variances.

\begin{coro} \label{coro:total-length-stats}
For any total order $ T $ and non-degenerate probability measure $ \mu $ on $ T $,
the means of the run lengths will be
\[
\expect{\open{N}_0} = -1 + e^{m_d} \prod_{\alpha \in \mathcal{A}} (1 + m_\alpha)
\quad \text{ and } \quad
\expect{\closed{N}_0} = -1 + e^{m_d} \prod_{\alpha \in \mathcal{A}} \frac{1}{1 - m_\alpha}
\]
\[
\expectWhen{\open{N}_i}{\open{S}_i} = \frac{2}{1 + \sum_{\alpha \in \mathcal{A}} m_\alpha^2}
\quad \text{ and } \quad
\expectWhen{\closed{N}_i}{\closed{S}_i} = \frac{2}{1 - \sum_{\alpha \in \mathcal{A}} m_\alpha^2}
\quad \text{ when } i \geq 1.
\]
Further, the variances of the run lengths will be
\begin{equation} \label{eq:total-init-length-var} \begin{split}
\var{\open{N}_0} &= \left( e^{m_d} \prod_{\alpha \in \mathcal{A}} (1 + m_\alpha) \right)
\left(3 - e^{m_d} \prod_{\alpha \in \mathcal{A}} (1 + m_\alpha) - 2 \sum_{\alpha \in \mathcal{A}} \frac{m_\alpha^2}{1 + m_\alpha}\right) \\
\var{\closed{N}_0} &= \left( e^{m_d} \prod_{\alpha \in \mathcal{A}} \frac{1}{1 - m_\alpha} \right)
\left(3 - e^{m_d} \prod_{\alpha \in \mathcal{A}} \frac{1}{1 - m_\alpha} + 2 \sum_{\alpha \in \mathcal{A}} \frac{m_\alpha^2}{1 - m_\alpha} \right)
\end{split} \end{equation}
\begin{equation} \label{eq:total-length-var} \begin{split}
\varWhen{\open{N}_i}{\open{S}_i} &= \frac{2}{1 + \sum_{\alpha \in \mathcal{A}} m_\alpha^2}
\left(-3 + 2 e^{m_d} \prod_{\alpha \in \mathcal{A}} (1 + m_\alpha) - \frac{2}{1 + \sum_{\alpha \in \mathcal{A}} m_\alpha^2}\right)
\quad \text{ for } i \geq 1 \\
\varWhen{\closed{N}_i}{\closed{S}_i} &= \frac{2}{1 - \sum_{\alpha \in \mathcal{A}} m_\alpha^2}
\left(-3 + 2 e^{m_d} \prod_{\alpha \in \mathcal{A}} \frac{1}{1 - m_\alpha} - \frac{2}{1 - \sum_{\alpha \in \mathcal{A}} m_\alpha^2} \right)
\quad \text{ for } i \geq 1.
\end{split} \end{equation}
\end{coro}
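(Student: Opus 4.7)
The plan is to read off the means and variances directly from the PGFs supplied by \cref{thm:total-length-pgf} using the standard identities $\expect{N} = G_N'(1)$ and $\var{N} = G_N''(1) + G_N'(1) - G_N'(1)^2$ (the second coming from $\expect{N(N-1)} = G_N''(1)$). Since the four PGFs in \cref{thm:total-length-pgf} are given in closed form, the whole corollary is just a differentiation exercise; the only real work is making the algebra collapse.

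For $\open{N}_0$, I would write $f(z) := e^{m_d z} \prod_{\alpha \in \mathcal{A}}(1 + m_\alpha z)$ so that $G_{\open{N}_0}(z) = \tfrac{1}{z} + (1 - \tfrac{1}{z}) f(z)$. Differentiating once and setting $z = 1$ gives $G_{\open{N}_0}'(1) = -1 + f(1)$, which is the stated mean. For the second derivative I would use the logarithmic derivative $f'(z)/f(z) = m_d + \sum_{\alpha} m_\alpha/(1+m_\alpha z)$ to keep the (possibly infinite) product manageable. The one nontrivial algebraic move is the identity
\[
m_d + \sum_{\alpha \in \mathcal{A}} \frac{m_\alpha}{1+m_\alpha} \;=\; 1 - \sum_{\alpha \in \mathcal{A}} \frac{m_\alpha^2}{1+m_\alpha},
\]
which follows immediately from $m_d = 1 - \sum_\alpha m_\alpha$ and is precisely what produces the $\sum_\alpha m_\alpha^2/(1+m_\alpha)$ factor in (\ref{eq:total-init-length-var}). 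Plugging $G''(1) = 2 - 2 f(1) \sum_\alpha m_\alpha^2/(1+m_\alpha)$ into the variance identity and simplifying yields the first line of (\ref{eq:total-init-length-var}). The non-strict case is a mirror computation in which the replacement $(1 + m_\alpha z) \mapsto (1 - m_\alpha z)^{-1}$ flips the relevant signs throughout.

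For $i \geq 1$, I would exploit the decomposition $G_{\open{N}_i}(z) = 1 + h(z) G_{\open{N}_0}(z)$ with $h(z) = 2(z-1)/(z A)$ and $A = 1 + \sum_\alpha m_\alpha^2$ supplied by \cref{thm:total-length-pgf}. Since $h(1) = 0$, the Leibniz rule trims out every term involving $G_{\open{N}_0}''(1)$: one finds $G_{\open{N}_i}'(1) = h'(1) = 2/A$ and $G_{\open{N}_i}''(1) = h''(1) + 2 h'(1) G_{\open{N}_0}'(1) = -\tfrac{4}{A} + \tfrac{4}{A}(-1 + f(1))$. Substituting into the variance identity and factoring $2/A$ reproduces (\ref{eq:total-length-var}). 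The non-strict version is again symmetric.

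I do not anticipate any genuine obstacle: the derivatives can be written down mechanically and no convergence issues arise at $z = 1$ (each derivative of the product is a finite sum whose $\alpha$-th term is $O(m_\alpha^2)$, so the sums converge since $\sum_\alpha m_\alpha \leq 1$). The main source of risk is purely bookkeeping, namely tracking the sign flips between the strict and non-strict cases and making the simplification via the displayed identity so that the final expressions match the clean forms claimed in (\ref{eq:total-init-length-var}) and (\ref{eq:total-length-var}).
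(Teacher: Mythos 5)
Your proposal is correct and amounts to the same computation as the paper's: the paper first differentiates the generic PGF $\tfrac{1}{z}\bigl(1+(z-1)P_\mu(z)\bigr)$ once and for all (its \cref{coro:length-stats}, giving $\var{N_0}=P_\mu(1)-P_\mu(1)^2+2P_\mu'(1)$ and the $i\geq 1$ formulae in terms of $P_\mu''(0)$) and then substitutes the explicit total-order run functions, whereas you differentiate the already-specialized PGFs — but the key steps, namely the logarithmic derivative of the product and the identity $m_d+\sum_\alpha \tfrac{m_\alpha}{1\pm m_\alpha}=1\mp\sum_\alpha\tfrac{m_\alpha^2}{1\pm m_\alpha}$, are exactly the ones the paper uses to evaluate $P_\mu'(1)$. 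The only point worth making explicit is that differentiation of $\closed{P}_\mu$ at $z=1$ is legitimate because non-degeneracy forces its radius of convergence to exceed $1$ (the paper's \cref{lem:func-conv-rad}), which your parenthetical convergence remark essentially covers.
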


\subsection{Countably Series-Parallel Partial Orders}
\label{sub:series-parallel}

In this subsection, we will introduce a method for calculating the PGFs of the run lengths
whenever $ T $ is a countably series-parallel partial order (defined below).
First, however, we will state some results which apply to all partial orders.
To do this, we will introduce two very useful generating functions using $ Z $ as a formal indeterminate.
For any finite non-negative measure $ \mu $ on a partial order $ T $,
we define the \textbf{strict run function} $ \open{P}_\mu(Z) $ as
\begin{equation} \label{eq:strict-run-func-defn}
\open{P}_\mu(Z) := 1 + \mu(T) Z + \sum_{n=2}^\infty \mu^n(\{\langle a_1, \ldots, a_n \rangle \in T^n \;:\; a_1 < \ldots < a_n\}) Z^n
\end{equation}
and the \textbf{non-strict run function} $ \closed{P}_\mu(Z) $ as
\begin{equation} \label{eq:non-strict-run-func-defn}
\closed{P}_\mu(Z) := 1 + \mu(T) Z + \sum_{n=2}^\infty \mu^n(\{\langle a_1, \ldots, a_n \rangle \in T^n \;:\; a_1 \leq \ldots \leq a_n\}) Z^n.
\end{equation}
In particular, when $ \mu $ is a probability measure and $ \{X_i\} $ is defined as above, we have
\[
\open{P}_\mu(Z) = 1 + Z + \sum_{n=2}^\infty \prob{X_1 < \ldots < X_n} Z^n
\quad \text{ and } \quad
\closed{P}_\mu(Z) = 1 + Z + \sum_{n=2}^\infty \prob{X_1 \leq \ldots \leq X_n} Z^n.
\]
The usefulness of these generating functions is justified by the following result
which holds for \emph{any} partial order.

\begin{thm} \label{thm:length-pgf}
For any partial order $ T $ and probability measure $ \mu $ on $ T $,
the PGFs of $ \open{N}_i $ and $ \closed{N}_i $ will be
\begin{equation} \label{eq:init-length-pgf}
G_{\open{N}_0}(Z) := \frac{1 + (Z - 1) \open{P}_\mu(Z)}{Z}
\quad \text{ and } \quad
G_{\closed{N}_0}(Z) := \frac{1 + (Z - 1) \closed{P}_\mu(Z)}{Z}.
\end{equation}
\begin{equation} \label{eq:length-pgf}
G_{\open{N}_i}(Z) = 1 + \frac{2 (Z - 1)}{Z (2 - \open{P}_\mu''(0))} G_{\open{N}_0}(Z)
\quad \text{ and } \quad
G_{\closed{N}_i}(Z) = 1 + \frac{2 (Z - 1)}{Z (2 - \closed{P}_\mu''(0))} G_{\closed{N}_0}(Z)
\text{ when } i \geq 1.
\end{equation}
Further, the radius of convergence of $ G_{\open{N}_i}(Z) $ will be infinite
and the radius of convergence of $ G_{\closed{N}_i}(Z) $ will be greater than or equal to $ \frac{1}{\|\mu\|} $
with equality when $ \mu $ is degenerate.
\end{thm}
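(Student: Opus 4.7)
The plan is to reduce everything to the strict chain probabilities $p_n := \prob{X_1 < \cdots < X_n}$, which are exactly the coefficients of $\open{P}_\mu$ (so that $p_0 = p_1 = 1$), and then sum telescoping series. By the iid assumption, $\{\open{N}_0 \geq n\} = \{X_0 < \cdots < X_{n-1}\}$ has probability $p_n$, so $\prob{\open{N}_0 = n} = p_n - p_{n+1}$ for each finite $n \geq 1$, and a direct computation yields
\[
G_{\open{N}_0}(Z) = \sum_{n=1}^\infty (p_n - p_{n+1}) Z^n = (\open{P}_\mu(Z) - 1) - \frac{\open{P}_\mu(Z) - 1 - Z}{Z} = \frac{1 + (Z - 1) \open{P}_\mu(Z)}{Z},
\]
which is \eqref{eq:init-length-pgf}.

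For $i \geq 1$ I would decompose
\[
\open{S}_i \cap \{\open{N}_i = n\} = \{X_i < \cdots < X_{i+n-1} \nless X_{i+n}\} \setminus \{X_{i-1} < X_i < \cdots < X_{i+n-1} \nless X_{i+n}\}
\]
and apply the same trick to each piece to obtain $\prob{\open{S}_i \cap \{\open{N}_i = n\}} = p_n - 2 p_{n+1} + p_{n+2}$. Since $\prob{\open{S}_i} = 1 - p_2$ and $\open{P}_\mu''(0) = 2 p_2$, summing and telescoping once more gives
\[
\sum_{n=1}^\infty (p_n - 2 p_{n+1} + p_{n+2}) Z^n = \frac{(Z-1)^2 \open{P}_\mu(Z) + (1 - p_2) Z^2 + (Z - 1)}{Z^2};
\]
dividing by $1 - p_2$ and substituting the rearrangement $(Z - 1) \open{P}_\mu(Z) = Z G_{\open{N}_0}(Z) - 1$ of \eqref{eq:init-length-pgf} recovers \eqref{eq:length-pgf}. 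The non-strict identities follow by the identical argument with $<$ replaced by $\leq$ throughout.

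For the radius of convergence, the symmetry of the iid sequence gives $\sum_{\sigma \in S_n} \prob{X_{\sigma(1)} < \cdots < X_{\sigma(n)}} \leq 1$, and each summand equals $p_n$; hence $p_n \leq 1/n!$, so $\open{P}_\mu(Z)$ is entire and both $G_{\open{N}_0}$ and $G_{\open{N}_i}$ have infinite radius of convergence. In the non-strict case the trivial bound $\mu^n(\{a_1 \leq \cdots \leq a_n\}) \leq \|\mu\|^n$ shows $\closed{P}_\mu(Z)$ converges for $|Z| < 1/\|\mu\|$, and when $\mu$ is degenerate at a single point each such inequality is an equality, giving $\closed{P}_\mu(Z) = 1/(1 - \|\mu\| Z)$ and radius exactly $1/\|\mu\|$.

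The main obstacle I anticipate is bookkeeping rather than depth: one must verify that the rational right-hand sides of \eqref{eq:init-length-pgf} and \eqref{eq:length-pgf} really are power series in $Z$, i.e., that the apparent $1/Z$ (and implicitly $1/Z^2$) poles are removable. This relies on the normalizations $p_0 = p_1 = 1$, which force the numerators to vanish to the required order at the origin. A secondary subtlety is the $n = \infty$ term: for strict runs the factorial bound gives $\prob{\open{N}_0 = \infty} = 0$ so no mass is missing, while for non-strict runs some mass may escape to infinity, but this affects only the PGF value at $Z = 1$, not the formal identity.
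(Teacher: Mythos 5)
Your proposal is correct and follows essentially the same route as the paper: it identifies $\prob{\open{N}_0 = n}$ with the first difference $L_n - L_{n+1}$ of the chain probabilities, $\prob{\open{S}_i \cap \{\open{N}_i = n\}}$ with the second difference $L_n - 2L_{n+1} + L_{n+2}$, telescopes the resulting series, and uses the same $1/n!$ symmetry bound (strict) and geometric bound (non-strict) for the radii of convergence. The only cosmetic difference is that you obtain the second-difference identity by a set subtraction where the paper splits $\{X_i < \cdots < X_{i+n-1} \nless X_{i+n}\}$ according to whether $X_{i-1} < X_i$; these are the same computation.
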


As before, we can use the PGFs to derive formulae for the means and variances of the run lengths.

\begin{coro} \label{coro:length-stats}
Suppose $ T $ is a partial order and $ \mu $ is a non-degenerate probability measure over $ T $.
The run lengths $ \open{N}_i $ and $ \closed{N}_i $ are almost surely finite and their means are
\begin{equation} \label{eq:init-length-mean}
\expect{\open{N}_0} = \open{P}_\mu(1) - 1
\quad \text{ and } \quad
\expect{\closed{N}_0} = \closed{P}_\mu(1) - 1
\end{equation}
\begin{equation} \label{eq:length-mean}
\expectWhen{\open{N}_i}{\open{S}_i} = \frac{2}{2 - \open{P}''(0)}
\quad \text{ and } \quad
\expectWhen{\closed{N}_i}{\closed{S}_i} = \frac{2}{2 - \closed{P}''(0)}
\quad \text{ for } i \geq 1.
\end{equation}
Also, the variances of the run lengths will be
\begin{equation} \label{eq:init-length-var}
\var{\open{N}_0} = \open{P}_\mu(1) - \open{P}_\mu(1)^2 + 2 \open{P}_\mu'(1)
\quad \text{ and } \quad
\var{\closed{N}_0} = \closed{P}_\mu(1) - \closed{P}_\mu(1)^2 + 2 \closed{P}_\mu'(1)
\end{equation}
\begin{equation} \label{eq:length-var}
\varWhen{\open{N}_i}{\open{S}_i} = \frac{4 \open{P}_\mu(1) - 6}{2 - \open{P}_\mu''(0)} - \frac{4}{(2 - \open{P}_\mu''(0))^2}
\quad \text{ and } \quad
\varWhen{\closed{N}_i}{\closed{S}_i} = \frac{4 \closed{P}_\mu(1) - 6}{2 - \closed{P}_\mu''(0)} - \frac{4}{(2 - \closed{P}_\mu''(0))^2}
\quad \text{ for } i \geq 1.
\end{equation}
\end{coro}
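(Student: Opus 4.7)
The plan is to reduce everything to standard probability generating function identities applied to the explicit formulas from \cref{thm:length-pgf}. Recall that for any $\mathbb{N}$-valued random variable $X$ with PGF $G_X$ analytic on a neighborhood of $1$, one has $\mathbb{P}(X < \infty) = G_X(1)$, $\expect{X} = G_X'(1)$, and $\var{X} = G_X''(1) + G_X'(1) - G_X'(1)^2$. So the corollary becomes a calculation: substitute $Z=1$, differentiate, and collect terms. Since $\open{N}_i$ and $\closed{N}_i$ are only defined on $\open{S}_i$ and $\closed{S}_i$, the same identities give the conditional mean and variance upon using the conditional PGFs written down in \cref{thm:length-pgf}.

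First I would handle almost-sure finiteness. Evaluating $G_{\open{N}_0}(Z) = (1 + (Z-1)\open{P}_\mu(Z))/Z$ at $Z=1$ gives $1$ trivially, and the non-strict analogue gives $1$ as well provided $\closed{P}_\mu(1)$ is finite; by the radius-of-convergence statement in \cref{thm:length-pgf} this requires $\mu$ to be non-degenerate, which is assumed. For $i\geq 1$ the factor $(Z-1)$ at the front of the correction term in \eqref{eq:length-pgf} kills it at $Z=1$, yielding $G_{\open{N}_i}(1) = G_{\closed{N}_i}(1) = 1$; here one just has to check that $2 - \open{P}_\mu''(0)$ and $2 - \closed{P}_\mu''(0)$ are nonzero, which follows from $\open{P}_\mu''(0) = 2\mu^2\{a_1 < a_2\} \leq 1$ and $\closed{P}_\mu''(0) = 2\mu^2\{a_1 \leq a_2\} < 2$ (the latter strict inequality using non-degeneracy).

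For the means and variances at $i = 0$, I would write $H(Z) := 1 + (Z-1)\open{P}_\mu(Z)$ so that $G_{\open{N}_0} = H/Z$, and apply the quotient rule. A short calculation gives $G_{\open{N}_0}'(1) = H'(1) - H(1) = \open{P}_\mu(1) - 1$ and $G_{\open{N}_0}''(1) = H''(1) - 2H'(1) + 2H(1) = 2\open{P}_\mu'(1) - 2\open{P}_\mu(1) + 2$; substituting into the variance identity and simplifying yields \eqref{eq:init-length-var}. The non-strict versions are literally identical with $\open{P}_\mu$ replaced by $\closed{P}_\mu$. For $i \geq 1$, write the PGF in \eqref{eq:length-pgf} as $1 + cF(Z)$ with $c = 2/(2 - \open{P}_\mu''(0))$ and $F(Z) = (Z-1)G_{\open{N}_0}(Z)/Z$; one more quotient-rule computation gives $F'(1) = 1$ and $F''(1) = 2\open{P}_\mu(1) - 4$, so $G_{\open{N}_i}'(1) = c$ and $G_{\open{N}_i}''(1) = c(2\open{P}_\mu(1) - 4)$, from which \eqref{eq:length-mean} and \eqref{eq:length-var} drop out after assembly.

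The only genuine subtlety, and thus the step requiring the most care, is the justification of termwise differentiation at $Z = 1$ in the non-strict case. The strict PGF has infinite radius of convergence so differentiation at $1$ is automatic, but the non-strict PGF only has radius of convergence $\geq 1/\|\mu\| = 1$. Non-degeneracy is what upgrades this to strict inequality (so that $\closed{P}_\mu(Z)$ is analytic on an open neighborhood of $1$), and I would invoke this upgrade explicitly before computing $\closed{P}_\mu'(1)$. Once analyticity is in hand the remainder is routine bookkeeping with the quotient rule.
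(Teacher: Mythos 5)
Your proposal is correct and follows essentially the same route as the paper: both differentiate the PGF formulas from \cref{thm:length-pgf} at $Z=1$ (your $G'(1),G''(1)$ factorial moments are equivalent to the paper's $(z\,d/dz)$ operator), both rely on non-degeneracy to push the radius of convergence of $\closed{P}_\mu$ strictly past $1$, and your computed values $G_{\open{N}_0}'(1)=\open{P}_\mu(1)-1$, $G_{\open{N}_0}''(1)=2\open{P}_\mu'(1)-2\open{P}_\mu(1)+2$, $F'(1)=1$, $F''(1)=2\open{P}_\mu(1)-4$ all check out against the stated formulas. The only cosmetic difference is the finiteness argument, where the paper bounds $\prob{N_i=\infty}\leq L_{k+1}\to 0$ directly rather than evaluating $G(1)=1$; both rest on the same convergence of $P_\mu(1)$.
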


While \cref{thm:length-pgf} holds for any partial order,
it remains to actually calculate the run functions.
In the case of total orders, we have the following formulae for the run functions.

\begin{prop} \label{prop:total-run-funcs}
Suppose $ T $ is a total order and $ \mu $ is a finite non-negative and non-degenerate measure on $ T $.
Let $ \mathcal{A} $ be the atoms of $ \mu $ with masses $ (m_\alpha)_{\alpha \in \mathcal{A}} $
and $ m_d $ be the diffuse mass of $ \mu $.
Then, for any $ z \in \mathbb{C} $,
\begin{equation} \label{eq:total-run-funcs}
\open{P}_\mu(z) = e^{m_d z} \prod_{\alpha \in \mathcal{A}} (1 + m_\alpha z)
\quad \text{ and when } |z| \leq \frac{1}{\|\mu\|}, \quad
\closed{P}_\mu(z) = e^{m_d z} \prod_{\alpha \in \mathcal{A}} \frac{1}{1 - m_\alpha z}.
\end{equation}
\end{prop}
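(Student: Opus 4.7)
The plan is to split $\mu$ into its diffuse and atomic parts $\mu = \mu_d + \mu_a$, where $\mu_a = \sum_{\alpha \in \mathcal{A}} m_\alpha \delta_\alpha$, and then expand $\mu^n = (\mu_d + \mu_a)^n$ to classify strict (resp.\ non-strict) chains of length $n$ by which indices draw from $\mu_a$ and which from $\mu_d$. Fubini/Tonelli on nonnegative integrands will justify all interchanges of sums and integrals, so the main work is a bookkeeping calculation.

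For the strict case, a chain $a_1 < \cdots < a_n$ with $k$ atomic positions must use $k$ distinct atoms $\alpha_{l_1} < \cdots < \alpha_{l_k}$ (forced by totality of the order and strict inequality) placed in $k$ specific positions determined by the "gap sizes" $s_0, s_1, \ldots, s_k \geq 0$ with $\sum_j s_j = n - k$. On the $j$-th gap, the $s_j$ diffuse entries must form a strictly increasing sequence confined to the interval between $\alpha_{l_j}$ and $\alpha_{l_{j+1}}$ (with obvious conventions at the ends), and since $\mu_d$ is diffuse, this sub-measure is $d_j^{s_j}/s_j!$ where $d_j$ is the diffuse mass of that interval. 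Multiplying and summing:
\[
\open{P}_\mu(z)
= \sum_{k\geq 0}\sum_{\alpha_{l_1}<\cdots<\alpha_{l_k}}\Bigl(\prod_{j=1}^k m_{\alpha_{l_j}} z\Bigr)\sum_{s_0,\ldots,s_k\geq 0}\prod_{j=0}^{k}\frac{(d_j z)^{s_j}}{s_j!}.
\]
The inner sum factors into $\prod_j e^{d_j z} = e^{z\sum_j d_j} = e^{m_d z}$ (this is where the telescoping $\sum_j d_j = m_d$ is crucial), and the outer sum over subsets of $\mathcal{A}$ is exactly $\sum_k e_k(m_\alpha)\, z^k = \prod_\alpha (1 + m_\alpha z)$, giving the stated formula.

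The non-strict case proceeds identically except that each chosen atom $\alpha_{l_j}$ may now appear with any multiplicity $k_{l_j} \geq 1$, contributing a geometric factor $\sum_{k_{l_j}\geq 1}(m_{\alpha_{l_j}} z)^{k_{l_j}} = \frac{m_{\alpha_{l_j}} z}{1 - m_{\alpha_{l_j}} z}$ in place of the single factor $m_{\alpha_{l_j}} z$. The diffuse gaps still contribute $e^{m_d z}$, and the sum over subsets of atoms becomes
\[
\prod_\alpha\Bigl(1 + \tfrac{m_\alpha z}{1 - m_\alpha z}\Bigr) = \prod_\alpha \frac{1}{1 - m_\alpha z},
\]
yielding the desired identity.

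The only delicate point is convergence when $\mathcal{A}$ is infinite. For $\open{P}_\mu$, the product $\prod_\alpha (1 + m_\alpha z)$ converges absolutely for every $z \in \mathbb{C}$ because $\sum_\alpha m_\alpha \leq \|\mu\| < \infty$, and all series in the computation are nonnegative (so the manipulations are unconditional). For $\closed{P}_\mu$, the non-degeneracy hypothesis yields $m_\alpha < \|\mu\|$ for every atom, so whenever $|z| \leq 1/\|\mu\|$ we have $|m_\alpha z| < 1$ uniformly, which makes every geometric series converge and the infinite product $\prod_\alpha (1 - m_\alpha z)^{-1}$ converge absolutely (since $\sum_\alpha m_\alpha |z| \leq 1$). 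This bound is exactly what validates swapping the order of summation over chain length, atom count, and gap sizes in the non-strict case.
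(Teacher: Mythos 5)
Your proof is correct, but it takes a genuinely different route from the paper's. The paper decomposes the \emph{order} as a series composition $R_0 \otimes \{\alpha_1\} \otimes R_1 \otimes \cdots \otimes \{\alpha_k\} \otimes R_k$ of the diffuse intervals between atoms and the singleton atoms, invokes multiplicativity of run functions under series composition (\cref{lem:fin-series-func}) together with $\open{P}_\lambda(Z) = \closed{P}_\lambda(Z) = e^{\|\lambda\| Z}$ for diffuse $\lambda$ (\cref{lem:total-diffuse-func}), and then reaches countably many atoms by an increasing-limit argument (\cref{prop:func-limit}, which in turn rests on the perturbation bound of \cref{lem:func-diff-bound}). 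You instead expand $\mu^n = (\mu_d + \mu_a)^n$ and classify chains by their atomic pattern, which amounts to re-deriving both lemmas inline: the gap computation $d_j^{s_j}/s_j!$ is exactly \cref{lem:total-diffuse-func} applied to the restriction of $\mu_d$ to each interval, and the factorization over gaps and chosen atoms is the series-composition identity in disguise. What your route buys is that the countably infinite atomic case comes for free: every term is nonnegative, so Tonelli licenses all the rearrangements at once and no monotone approximation $\mu_k \to \mu$ is needed; your convergence argument for $|z| \leq 1/\|\mu\|$ (non-degeneracy forces $\sup_{\alpha} m_\alpha < \|\mu\|$, hence $\sup_\alpha m_\alpha |z| < 1$ and $\sum_\alpha m_\alpha |z| \leq 1$) is also sound and matches the claimed domain. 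What the paper's route buys is modularity, since \cref{lem:fin-series-func} and \cref{prop:func-limit} are needed anyway for \cref{thm:series-parallel}. If you write your version up in full, two small points deserve an explicit sentence: first, that in a non-strict chain the positions carrying a given atom form a consecutive block (forced by $\alpha \leq x \leq \alpha$), so the per-atom geometric factor is the correct accounting; second, that the telescoping $\sum_j d_j = m_d$ holds because $\mu_d$ assigns zero mass to \emph{all} of $\mathcal{A}$, not merely to the atoms selected for the chain. Both are true, but they are the places a reader would pause.
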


Using \cref{prop:total-run-funcs} together with \cref{thm:length-pgf} and \cref{coro:length-stats},
we can derive the results for total orders described in \cref{thm:total-length-pgf} and \cref{coro:total-length-stats}.
We can also calculate the run functions in the case of countably series-parallel partial orders.
In order to define this class of partial orders, we will introduce the following operations.

\begin{defn} \label{defn:series-para-ord}
Suppose $ \Gamma $ is a totally ordered index set
and $ \mathcal{T} = \{T_\gamma\}_{\gamma \in \Gamma} $ is an indexed collection of partial orders.
We define the \textbf{series composition} and \textbf{parallel composition} of $ \mathcal{T} $
as the partial orders
\[
\bigotimes \mathcal{T} = \bigotimes_{\gamma \in \Gamma} T_\gamma \quad \text{ and } \quad
\bigoplus \mathcal{T} = \bigoplus_{\gamma \in \Gamma} T_\gamma,
\]
respectively.
The underlying set of $ \bigotimes \mathcal{T} $ and $ \bigoplus \mathcal{T} $ is the disjoint union $ \bigsqcup_{\gamma \in \Gamma} T_\gamma $.
For any $ \gamma, \tilde{\gamma} \in \Gamma $ with $ \gamma \neq \tilde{\gamma} $, the ordering relation of $ \bigotimes \mathcal{T} $ is
\[ \begin{split}
\text{for all }& t_1, t_2 \in T_\gamma, \qquad t_1 \leq_{\bigotimes \mathcal{T}} \, t_2 \iff t_1 \leq_{T_\gamma} t_2 \quad \text{ and} \\
\text{for all }& t \in T_\gamma, \tilde{t} \in T_{\tilde{\gamma}}, \qquad
t \leq_{\bigotimes \mathcal{T}} \, \tilde{t} \iff \gamma <_\Gamma \tilde{\gamma}.
\end{split} \]
and the ordering relation of $ \bigoplus \mathcal{T} $ is
\[ \begin{split}
\text{for all }& t_1, t_2 \in T_\gamma, \qquad t_1 \leq_{\bigoplus \mathcal{T}} \, t_2 \iff t_1 \leq_{T_\gamma} t_2 \quad \text{ and} \\
\text{for all }& t \in T_\gamma, \tilde{t} \in T_{\tilde{\gamma}}, \qquad t \text{ is incomparable to } \tilde{t}.
\end{split} \]
Note that the ordering on $ \Gamma $ has no effect on the parallel composition.
\end{defn}

If $ T_1 $ and $ T_2 $ are partial orders, we can think of $ T_1 \otimes T_2 $ as the partial order
in which we place all of the elements of $ T_1 $ below all of the elements of $ T_2 $.
Simiarly, $ T_1 \oplus T_2 $ is the partial order
in which we place all of the elements of $ T_1 $ beside all of the elements of $ T_2 $
so that every $ x \in T_1 $ is incomparable to every $ y \in T_2 $.
The above operations have been studied and rediscovered a number of times.
As a result, there are a variety of different notations and names for them.
The above ``series composition" is sometimes referred to as the
``linear sum", ``ordinal sum", or ``lexicographic sum".
Furthermore the above ``parallel composition" is sometimes referred to as the
``direct sum" or ``disjoint union".
In most cases, only series and parallel compositions where $ \Gamma $ is finite are considered.
Using the terms series and parallel composition,
M\"ohring enumerates several of the main results about them particularly in the case of finite orders \cite{Moh}.
Schr\"oder provides a more abstract treatment of series composition using the term lexicographic sum \cite{Sch}.
\medskip

A natural collection of partial orders to consider is those which can be constructed using these operations.
We will say that a partial order $ T $
is \textbf{countably series-parallel} if it can be constructed
from total orders using countable series and parallel compositions
(ie.\ where the index set $ \Gamma $ is countable).
Note that the series-parallel partial orders considered by M\"ohring \cite{Moh} and other authors
only allow finite series and parallel compositions starting with singleton orders.
Next, we will extend series and parallel composition to measures over orders
which does not appear to have been done before.

\begin{defn} \label{defn:series-para-meas}
Let $ \Gamma $ and $ \mathcal{T} $ be as in \cref{defn:series-para-ord}.
Suppose $ M = \{\mu_\gamma\}_{\gamma \in \Gamma} $ is a collection of finite non-negative measures
where $ \mu_\gamma $ is defined over the Borel $ \sigma $-algebra of $ T_\gamma $.
Further, we assume that $ \sum_{\gamma \in \Gamma} \mu_\gamma(T_\gamma) < \infty $.
Then, we define the \textbf{series composition} and \textbf{parallel composition} of $ M $
as the finite non-negative measures
\[
\bigotimes M = \bigotimes_{\gamma \in \Gamma} \mu_\gamma \quad \text{ and } \quad
\bigoplus M = \bigoplus_{\gamma \in \Gamma} \mu_\gamma,
\]
respectively.
Then, for any $ E \in \borel{\bigotimes \mathcal{T}} $ and $ F \in \borel{\bigoplus \mathcal{T}} $,
\[
\left(\bigotimes M\right)(E) = \sum_{\gamma \in \Gamma} \mu_\gamma(E \cap T_\gamma) \quad \text{ and } \quad
\left(\bigoplus M\right)(F) = \sum_{\gamma \in \Gamma} \mu_\gamma(F \cap T_\gamma).
\]
\end{defn}

Suppose $ \mu_1 $ and $ \mu_2 $ are measures on $ T_1 $ and $ T_2 $.
As above, we can think of $ \mu_1 \otimes \mu_2 $ as the measure
in which we place the mass of $ \mu_1 $ below the mass of $ \mu_2 $
and $ \mu_1 \oplus \mu_2 $ as the measure
in which we place the mass of $ \mu_1 $ besides the mass of $ \mu_2 $.

\begin{figure}[H] \label{fig:series-parallel} \centering
\begin{tikzpicture}
\fill (-3.5, 1.5) circle (3pt);
\fill (-3, 1.5) circle (5pt);
\draw[line width=0.5mm, line cap=round] (-2.5, 1.5) -- (-1,1.5);
\draw (-2.35, 1.5) ellipse [x radius=47pt, y radius=14pt];
\node at (-2.35, 2.25) {$ \langle \mu_1, T_1 \rangle $};

\fill (1, 1.5) circle (3pt);
\fill (1.5, 1.5) circle (3pt);
\draw[line width=0.5mm, line cap=round] (2, 1.5) -- (3, 1.5);
\fill (3.5, 1.5) circle (5pt);
\draw (2.3, 1.5) ellipse [x radius=48pt, y radius=14pt];
\node at (2.3, 2.25) {$ \langle \mu_2, T_2 \rangle $};

\draw[line width=0.4mm, ->] (0, 1) -- (-3, -0.55);
\draw[line width=0.4mm, ->] (0, 1) -- (2.6, 0);

\fill (-6.5, -2) circle (3pt);
\fill (-6, -2) circle (5pt);
\draw[line width=0.6mm, line cap=round] (-5.5, -2) -- (-4, -2);
\fill (-3.5, -2) circle (3pt);
\fill (-3, -2) circle (3pt);
\draw[line width=0.6mm, line cap=round] (-2.5, -2) -- (-1.5, -2);
\fill (-1, -2) circle (5pt);
\draw (-3.7, -2) ellipse[x radius=90pt, y radius=17pt];
\node at (-3.7, -1.1) {$ \langle \mu_1 \otimes \mu_2, T_1 \otimes T_2 \rangle $};

\fill (2, -1.5) circle (3pt);
\fill (2.5, -1.5) circle (5pt);
\draw[line width=0.6mm, line cap=round] (3, -1.5) -- (4.5, -1.5);
\fill (2, -2.5) circle (3pt);
\fill (2.5, -2.5) circle (3pt);
\draw[line width=0.6mm, line cap=round] (3, -2.5) -- (4, -2.5);
\fill (4.5, -2.5) circle (5pt);
\draw (3.3, -2) ellipse[x radius=50pt, y radius=36pt];
\node at (3.3, -0.45) {$ \langle \mu_1 \oplus \mu_2, T_1 \oplus T_2 \rangle $};
\end{tikzpicture}
\caption{
The series and parallel compositions of the measures $ \mu_1 $ and $ \mu_2 $
and their orders $ T_1 $ and $ T_2 $.
Circles represent atoms with the size of the circle indicating the mass.
Lines represent totally ordered sections of diffuse mass
with the length indicating the mass.
Lower elements are to the left.}
\end{figure}
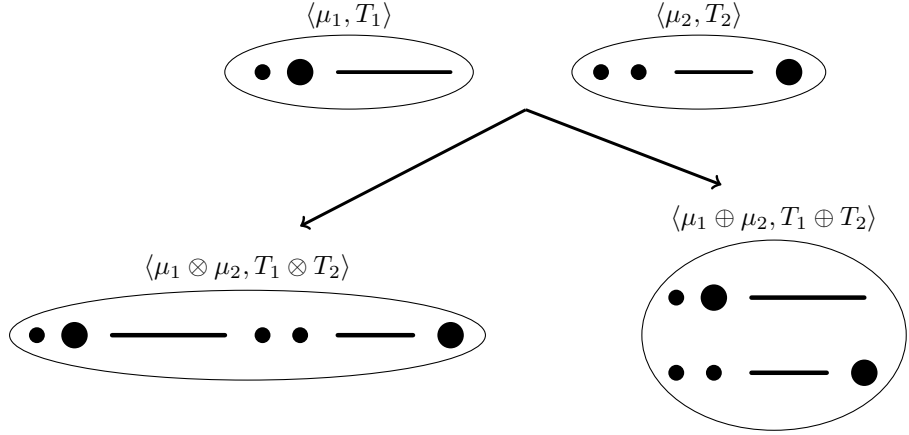

The following theorem specifies how we can compute the run functions
for a measure on a countably series-parallel partial order.
After obtaining the run functions, we can use \cref{thm:length-pgf} and \cref{coro:length-stats}
to get the PGFs, means, and variances of the run lengths.

\begin{thm} \label{thm:series-parallel}
Suppose $ T $ is a countably series-parallel partial order
and $ \mu $ is a finite non-negative measure on $ T $.
Let $ R $ be the radius of convergence of $ \closed{P}_\mu(Z) $.
Then, either $ T $ is a total order or $ T $ is a countable series or parallel composition.
If $ T $ is a total order then the run functions are determined by \cref{prop:total-run-funcs}.
If $ T $ is a series composition then $ T = \bigotimes_{\gamma \in \Gamma} T_\gamma $
and for all $ z \in \mathbb{C} $,
\[
\open{P}_\mu(z) = \prod_{\gamma \in \Gamma} \open{P}_{\mu \big|_{T_\gamma}}(z)
\quad \text{ and when } |z| < R, \quad
\closed{P}_\mu(z) = \prod_{\gamma \in \Gamma} \closed{P}_{\mu \big|_{T_\gamma}}(z)
\]
where $ \mu \big|_{T_\gamma} $ is the restriction of $ \mu $ to $ T_\gamma $.
If $ T $ is a parallel composition then $ T = \bigoplus_{\gamma \in \Gamma} T_\gamma $
and for all $ z \in \mathbb{C} $,
\[
\open{P}_\mu(z) - 1 = \sum_{\gamma \in \Gamma} \left(\open{P}_{\mu \big|_{T_\gamma}}(z) - 1 \right)
\quad \text{ and when } |z| < R, \quad
\closed{P}_\mu(z) - 1 = \sum_{\gamma \in \Gamma} \left(\closed{P}_{\mu \big|_{T_\gamma}}(z) - 1 \right).
\]
\end{thm}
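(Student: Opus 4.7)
The plan is to argue inductively on the structural decomposition of a countably series--parallel order. The dichotomy is immediate from the recursive definition: every countably series--parallel order is either a total order (the base case) or was assembled by applying one outermost series or parallel composition to a collection of countably series--parallel orders. The total order case is handled directly by \cref{prop:total-run-funcs}, so the only work is to establish the two compositional identities and verify their domains of convergence.

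For a series composition $T = \bigotimes_{\gamma \in \Gamma} T_\gamma$, I would use the fact that every element of $T_\gamma$ lies strictly below every element of $T_{\tilde\gamma}$ whenever $\gamma <_\Gamma \tilde\gamma$. Thus any (strictly or non-strictly) monotonic $n$-tuple $\langle a_1, \ldots, a_n \rangle$ in $T$ splits uniquely into consecutive ``blocks'' where the $\gamma$-block consists of those indices with $a_i \in T_\gamma$, each block is itself monotonic inside $T_\gamma$, and conversely any choice of monotonic blocks in the $T_\gamma$ assembles into a monotonic sequence in $T$. Applying Tonelli to decompose $\mu^n$ over the disjoint union and summing over all compositions $n = \sum_\gamma n_\gamma$ with finitely many $n_\gamma \neq 0$ yields exactly the convolution identity
\[
[Z^n]\,\open{P}_\mu(Z) = \sum_{\sum_\gamma n_\gamma = n} \prod_{\gamma \in \Gamma} [Z^{n_\gamma}]\,\open{P}_{\mu|_{T_\gamma}}(Z),
\]
and similarly for $\closed{P}$. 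This is precisely the power series expansion of the claimed infinite product.

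For a parallel composition $T = \bigoplus_{\gamma \in \Gamma} T_\gamma$, the argument collapses because any two elements drawn from distinct $T_\gamma$ are incomparable. Hence a monotonic sequence of length $n \geq 2$ is forced to lie inside a single $T_\gamma$, giving $[Z^n]\,\open{P}_\mu = \sum_{\gamma} [Z^n]\,\open{P}_{\mu|_{T_\gamma}}$. The $n=0$ and $n=1$ coefficients match after subtracting $1$ from each summand (the constant term contributes once on the left and the linear term is $\mu(T) = \sum_\gamma \mu_\gamma(T_\gamma)$ by countable additivity), which is precisely why the identity is stated with the ``$-1$'' shift.

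The main obstacle is not the combinatorial bijection but handling the convergence of the infinite product and sum on their claimed domains. Here I would use the assumption $\sum_\gamma \mu_\gamma(T_\gamma) < \infty$ from \cref{defn:series-para-meas}: since each factor in the series case satisfies $|\open{P}_{\mu|_{T_\gamma}}(z) - 1| \leq \mu_\gamma(T_\gamma)|z| \exp(\mu_\gamma(T_\gamma)|z|)$ for $z \in \mathbb{C}$, and correspondingly $|\closed{P}_{\mu|_{T_\gamma}}(z) - 1| \leq C_R\, \mu_\gamma(T_\gamma)$ on $|z| < R$ for each compact subdisk, the standard criterion for absolute convergence of infinite products applies uniformly on compact sets, yielding entire (resp.\ analytic on $|z|<R$) limits. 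The parallel case is even easier since it reduces to an absolutely convergent series of power series with the same summability bound. A minor technical check is that the event $\{a_1 < \ldots < a_n\}$ in the composed order is measurable in the product Borel $\sigma$-algebra, which follows because it is a countable (in the series case) or immediate (in the parallel case) disjoint union of measurable events in the individual $T_\gamma^n$.
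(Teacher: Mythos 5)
Your proposal is correct, but it reaches the result by a genuinely different route than the paper. The paper's proof of this theorem is only the final reduction: it observes via $\sigma$-additivity that $\mu = \bigotimes_\gamma \mu\big|_{T_\gamma}$ (resp.\ $\bigoplus_\gamma \mu\big|_{T_\gamma}$) and then invokes \cref{prop:comb-funcs}, which is itself built in stages — the two-factor identities of \cref{lem:fin-series-func} and \cref{lem:fin-para-func} (whose combinatorial core, the block decomposition of a monotone tuple for series and confinement to a single component for parallel, is exactly yours), an extension to finite $\Gamma$ by associativity, and finally passage to countable $\Gamma$ by approximating $\mu$ with the increasing truncations $\mu_k$ and applying \cref{prop:func-limit}, which rests on the perturbation bound $\abs{P_{\mu_1}(z) - P_{\mu_2}(z)} \leq \norm{\mu_1 - \mu_2} \cdot |z| \cdot P_{\mu_1}(|z|) P_{\mu_2}(|z|)$ of \cref{lem:func-diff-bound}. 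You instead prove the coefficient identities for countable $\Gamma$ in one shot, by writing $\{a_1 \leq \ldots \leq a_n\}_{T^n}$ as a countable disjoint union over compositions $n = \sum_\gamma n_\gamma$ (series) or over single components (parallel), and then control convergence of the resulting infinite product and sum by the termwise bounds $\abs{\open{P}_{\mu|_{T_\gamma}}(z) - 1} \leq \mu(T_\gamma)\,|z|\, e^{\mu(T_\gamma)|z|}$ and $\abs{\closed{P}_{\mu|_{T_\gamma}}(z) - 1} \leq \mu(T_\gamma)\, b\, \closed{P}_\mu(b)$ on $|z| \leq b < R$ (the latter following from $\closed{L}_n(\mu|_{T_\gamma}) \leq \mu(T_\gamma) \closed{L}_{n-1}(\mu)$), which are summable over $\gamma$ since $\sum_\gamma \mu(T_\gamma) = \norm{\mu}$. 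Your approach is more self-contained and avoids the measure-limit apparatus entirely; its only extra obligation is to justify identifying the analytic limit of the partial products with the power series $\open{P}_\mu$, which follows from the non-negativity of the coefficients (the partial-product coefficients increase monotonically to $L_n(\mu)$) together with uniform convergence on compact sets. The paper's longer route has the advantage that \cref{lem:func-diff-bound} and \cref{prop:func-limit} are reused elsewhere, notably in the countable-atom case of \cref{prop:total-run-funcs}.
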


\subsection{Strong Law of Large Numbers}
\label{sub:slln}

Let $ \omega \in \Omega $ be a particular outcome
and consider the proportion of strict (resp.\ non-strict) runs in the sequence $ \{X_i(\omega)\}_{i=0}^\infty $ of length $ n $.
Because $ \open{S}_i $ and $ \open{S}_j $ are correlated events, it is not clear whether this proportion will converge.
However, it happens that as $ |i - j| \to \infty $, $ \open{S}_i $ and $ \open{S}_j $
decorrelate quickly enough that this proportion will indeed converge.
In particular, the proportion of strict (resp.\ non-strict) runs
will approach $ \probWhen{\open{N}_i = n}{\open{S}_i} $ \Big(resp.\ $ \probWhen{\closed{N}_i = n}{\closed{S}_i} $ \Big) for any $ i \geq 1 $.
That is to say the asymptotic distribution of lengths of runs in a particular infinite sequence
is also given by \cref{thm:length-pgf}.

\begin{thm} \label{thm:slln}
Suppose $ T $ is a partial order and $ \mu $ is a probability measure on $ T $
with $ \open{S}_i $, $ \closed{S}_i $, $ \open{N}_i $, and $ \closed{N}_i $ defined as in the introduction.
Let $ b, n \geq 0 $ and $ i \geq 1 $ be integers then
\begin{equation} \label{eq:prob-limit-slln} \begin{split}
\lim_{b \to \infty} \frac{\# \{j \in [0, b) \;:\; \open{S}_j, \open{N}_j = n\}}{\# \{j \in [0, b) \;:\; \open{S}_j\}}
&= \probWhen{\open{N}_i = n}{\open{S}_i} \\
\lim_{b \to \infty} \frac{\# \{j \in [0, b) \;:\; \closed{S}_j, \closed{N}_j = n\}}{\# \{j \in [0, b) \;:\; \closed{S}_j\}}
&= \probWhen{\closed{N}_i = n}{\closed{S}_i}
\end{split} \end{equation}
almost surely.
\end{thm}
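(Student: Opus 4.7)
The plan is to recognize the numerator and denominator of \eqref{eq:prob-limit-slln} as Birkhoff ergodic averages for the coordinate shift $\tau$ on the i.i.d.\ product space $(T^{\mathbb{N}}, \mu^{\otimes \mathbb{N}})$. Because the $X_i$ are i.i.d., $\tau$ is measure-preserving and ergodic (a standard consequence of Kolmogorov's $0$-$1$ law for the tail $\sigma$-algebra). For fixed $n \geq 0$, set
$$Y_j := \indic_{\open{S}_j \cap \{\open{N}_j = n\}}, \qquad Z_j := \indic_{\open{S}_j}, \qquad j \geq 1.$$
Since $n$ is finite, each $Y_j$ depends only on the finite block $(X_{j-1}, X_j, \ldots, X_{j+n})$ and each $Z_j$ only on $(X_{j-1}, X_j)$; consequently there are fixed bounded measurable $f, g$ with $Y_j = f \circ \tau^{j-1}$ and $Z_j = g \circ \tau^{j-1}$.

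Applying Birkhoff's pointwise ergodic theorem to each sequence, almost surely
$$\frac{1}{b}\sum_{j=1}^{b-1} Y_j \longrightarrow \mathbb{E}[Y_1] = \mathbb{P}(\open{S}_1 \cap \{\open{N}_1 = n\}) \quad \text{and} \quad \frac{1}{b}\sum_{j=1}^{b-1} Z_j \longrightarrow \mathbb{E}[Z_1] = \mathbb{P}(\open{S}_1).$$
The $j = 0$ term in each count (where $\open{S}_0 = \Omega$ by convention) is bounded by $1$ and so does not affect either Ces\`aro limit. Dividing the two limits and invoking stationarity of $\tau$ to identify $\mathbb{P}(\open{S}_1) = \mathbb{P}(\open{S}_i)$ and $\mathbb{P}(\open{S}_1 \cap \{\open{N}_1 = n\}) = \mathbb{P}(\open{S}_i \cap \{\open{N}_i = n\})$ for every $i \geq 1$, the ratio equals $\mathbb{P}(\open{N}_i = n \mid \open{S}_i)$. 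The non-strict statement follows from the identical argument with $<, \leq$ interchanged and the strict events replaced by their non-strict counterparts.

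Two small verifications close the argument. First, the ratio in \eqref{eq:prob-limit-slln} must eventually be defined, which requires $\mathbb{P}(\open{S}_1) > 0$; this is automatic since exchangeability forces $\mathbb{P}(X_0 < X_1) = \mathbb{P}(X_0 > X_1) \leq 1/2$, whence $\mathbb{P}(X_0 \nless X_1) \geq 1/2$. The analogous computation yields $\mathbb{P}(\closed{S}_1) > 0$ precisely when $\mu$ is non-degenerate, which is the implicit hypothesis (otherwise the conditioning event on the right-hand side is null). Second, the required measurability of $Y_j$ and $Z_j$ reduces, as in the introduction, to the running assumption that $\{X_0 < X_1\} \in \mathcal{F}$.

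The argument is essentially a single invocation of Birkhoff applied to cylinder indicators, so I do not foresee a serious obstacle. The point worth \emph{emphasizing} is that the non-trivial dependence among the $\open{S}_j$ across different $j$---which made convergence of the proportion appear delicate at first glance---is exactly the dependence the ergodic theorem is designed to accommodate: one never needs independence of the summands, only stationarity and ergodicity of the shift, both of which come for free from the i.i.d.\ law of $\{X_i\}$.
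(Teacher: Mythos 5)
Your proof is correct, but it takes a genuinely different route from the paper. The paper avoids the ergodic theorem entirely: it centers the indicators $R_i = \indic_{S_i}$ and $Q_{i,n} = \indic_{S_i \cap \{N_i = n\}}$, verifies by hand that $R_i, R_j$ are independent once $|i-j| > 1$ and that $Q_{i,n}, Q_{j,n}$ are independent once $|i-j| > n+1$, and then invokes a strong law of large numbers for weakly correlated bounded variables (Corollary 4 of Lyons), with the correlation bound $\Phi(k)$ taken to be the indicator of $k \leq n+1$. Your approach instead realizes $Y_j$ and $Z_j$ as fixed cylinder functions composed with powers of the coordinate shift and applies Birkhoff's pointwise ergodic theorem, using ergodicity of the i.i.d.\ shift via Kolmogorov's zero--one law. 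Both are sound. Your route is shorter and conceptually cleaner --- it needs only stationarity and ergodicity, not the finite-range dependence structure --- whereas the paper's route is more elementary in its prerequisites and makes the decorrelation mechanism explicit (which is the phenomenon the surrounding discussion in \cref{sub:slln} emphasizes). Your two ``small verifications'' are genuinely needed and correctly handled: positivity of $\prob{\open{S}_1}$ follows from exchangeability and disjointness of $\{X_0 < X_1\}$ and $\{X_1 < X_0\}$ in any partial order, and the degenerate case does make the non-strict conditional probability ill-posed --- a hypothesis the paper's proof also uses implicitly when it divides by $1 - \closed{L}_2$. One cosmetic remark: identifying the limit of the ratio with $\probWhen{\open{N}_i = n}{\open{S}_i}$ requires knowing $\prob{\open{S}_i \cap \{\open{N}_i = n\}}$ and $\prob{\open{S}_i}$ do not depend on $i \geq 1$, which your appeal to stationarity supplies; the paper gets the same identification from its explicit formulas $\open{L}_n - 2\open{L}_{n+1} + \open{L}_{n+2}$ and $1 - \open{L}_2$.
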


\subsection{Rearrangement Insensitivity}
\label{sub:rear-insens}

Suppose we define the two element total order $ T_{xy} := \{x, y\} $ where $ x < y $.
For some $ p \in (0, 1) $, we define the probability measure $ \mu_{xy} $ on $ T_{xy} $
where $ \mu_{xy}(\{x\}) = p $ and $ \mu_{xy}(\{y\}) = 1 - p $.
Similarly, we may define $ T_{yx} = \{x, y\} $ as the partial order where $ y < x $
and we endow it with the probability measure $ \mu_{yx} $ on $ T_{yx} $
where $ \mu_{yx}(\{x\}) = \mu_{xy}(\{x\}) = p $ and $ \mu_{yx}(\{y\}) = \mu_{xy}(\{y\}) = 1 - p $.
Because the number of atoms of each mass is the same for $ \mu_{xy} $ and $ \mu_{yx} $,
we can think of $ \mu_{xy} $ as a rearrangement of $ \mu_{yx} $.
More generally, for any finite non-negative measures $ \mu $ and $ \mu' $
on total orders $ T $ and $ T' $, respectively,
we say $ \mu' $ is a \textbf{rearrangement} of $ \mu $
if there exists a collection of measures on total orders $ \{\langle \mu_\gamma, T_\gamma \rangle\}_{\gamma \in \Gamma} $
and a permutation $ \sigma $ over $ \Gamma $
such that $ \mu = \bigotimes_\gamma \mu_\gamma $ and $ \mu' = \bigotimes_\gamma \mu_{\sigma(\gamma)} $.

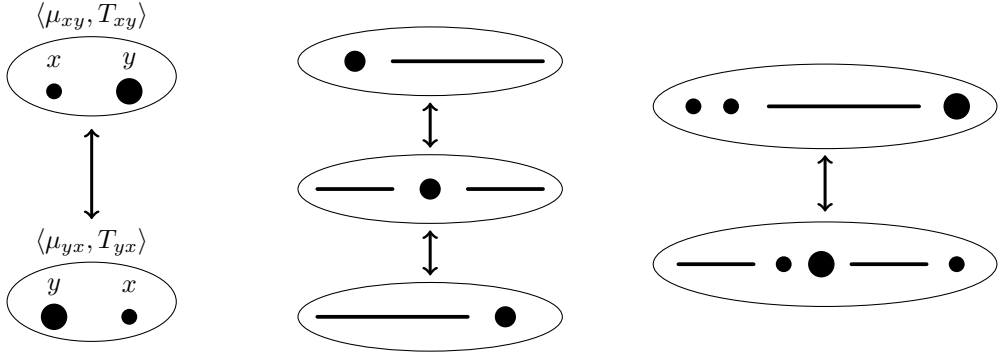
\begin{figure}[H] \label{fig:series-parallel} \centering
\begin{tikzpicture}
\node at (-2.5, 4) {$ \langle \mu_{xy}, T_{xy} \rangle $};
\fill (-3, 3) circle (3pt);
\node at (-3, 3.4) {$ x $};
\fill (-2, 3) circle (5pt);
\node at (-2, 3.4) {$ y $};
\draw (-2.5, 3.2) ellipse [x radius=32pt, y radius=15pt];

\draw[line width=0.4mm, <->] (-2.5, 2.5) -- (-2.5, 1.3);

\node at (-2.5, 1) {$ \langle \mu_{yx}, T_{yx} \rangle $};
\fill (-3, 0) circle (5pt);
\node at (-3, 0.4) {$ y $};
\fill (-2, 0) circle (3pt);
\node at (-2, 0.4) {$ x $};
\draw (-2.5, 0.2) ellipse [x radius = 32pt, y radius=15pt];

\fill (1, 3.4) circle (4pt);
\draw[line width=0.5mm, line cap=round] (1.5, 3.4) -- (3.5, 3.4);
\draw (2, 3.4) ellipse [x radius=50pt, y radius=13pt];

\draw[line width=0.4mm, <->] (2, 2.85) -- (2, 2.25);

\draw[line width=0.5mm, line cap=round] (0.5, 1.7) -- (1.5, 1.7);
\fill (2, 1.7) circle (4pt);
\draw[line width=0.5mm, line cap=round] (2.5, 1.7) -- (3.5, 1.7);
\draw (2, 1.7) ellipse [x radius=50pt, y radius=13pt];

\draw[line width=0.4mm, <->] (2, 1.15) -- (2, 0.55);

\draw[line width=0.5mm, line cap=round] (0.5, 0) -- (2.5, 0);
\fill (3, 0) circle (4pt);
\draw (2, 0) ellipse [x radius=50pt, y radius=13pt];

\fill (5.5, 2.8) circle (3pt);
\fill (6, 2.8) circle (3pt);
\draw[line width=0.5mm, line cap=round] (6.5, 2.8) -- (8.5, 2.8);
\fill (9, 2.8) circle (5pt);
\draw (7.25, 2.8) ellipse [x radius=65pt, y radius=16pt];

\draw[line width=0.4mm, <->] (7.25, 2.15) -- (7.25, 1.4);

\draw[line width=0.5mm, line cap=round] (5.3, 0.7) -- (6.3, 0.7);
\fill (6.7, 0.7) circle (3pt);
\fill (7.2, 0.7) circle (5pt);
\draw[line width=0.5mm, line cap=round] (7.6, 0.7) -- (8.6, 0.7);
\fill (9, 0.7) circle (3 pt);
\draw (7.25, 0.7) ellipse [x radius=65pt, y radius=16pt];

\end{tikzpicture}
\caption{Three examples of rearranged total orders.
Circles represent atoms and lines represent diffuse mass.}
\end{figure}

We are interested in the distribution of run lengths for $ \mu_{xy} $ and $ \mu_{yx} $.
Both $ \mu_{yx} $ and $ \mu_{xy} $ have one atom of mass $ p $ and one of mass $ 1 - p $.
Thus, by \cref{prop:total-run-funcs}, the run functions of $ \mu_{xy} $ and $ \mu_{yx} $ will be
\[
\open{P}_{\mu_{xy}}(Z) = \left(1 + \frac{Z}{3}\right) \left(1 + \frac{2 Z}{3}\right) = \open{P}_{\mu_{yx}}(Z)
\quad \text{ and } \quad
\closed{P}_{\mu_{xy}}(Z) = \frac{1}{1 - \frac{Z}{3}} \cdot \frac{1}{1 - \frac{2 Z}{3}} = \closed{P}_{\mu_{yx}}(Z)
\]
which with \cref{thm:length-pgf} means that the PGFs of the run lengths agree between $ \mu_{xy} $ and $ \mu_{yx} $.
In particular, the distributions of the run lengths for $ \mu_{xy} $ and $ \mu_{yx} $ will be the same.
Therefore, we might say the events $ \{\open{N}_i = n\} $ and $ \{\closed{N}_i = n\} $ are insensitive to rearrangement.
More generally, we say an event $ E \in \mathcal{F} $ associated with the random sequence $ X_0, X_1, X_2, \ldots $
is \textbf{rearrangement insensitive} if for any probability measures $ \mu $ and $ \mu' $
which are rearrangements of each other $ \prob[\mu]{E} = \prob[\mu']{E} $
where $ \mathbb{P}_\mu $ is the probability of an event when the distribution of $ X_i $ is $ \mu $.
\medskip

\begin{prop} \label{prop:runs-insens}
For any $ n \geq 0 $ and any $ i \geq 0 $,
the events $ \{\open{N}_i = n\} $ and $ \{\closed{N}_i = n\} $
are insensitive to rearrangement.
\end{prop}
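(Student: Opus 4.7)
The plan is to reduce rearrangement insensitivity to invariance of the run functions $\open{P}_\mu$ and $\closed{P}_\mu$, and then invoke \cref{thm:length-pgf}.

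Suppose $\mu = \bigotimes_{\gamma \in \Gamma} \mu_\gamma$ and $\mu' = \bigotimes_{\gamma \in \Gamma} \mu_{\sigma(\gamma)}$ is a rearrangement. I would first observe that the multiset of atom masses of $\mu$, formed by collecting the atoms of each $\mu_\gamma$ with their masses, agrees with that of $\mu'$: applying $\sigma$ merely relabels which block each atom came from without altering its mass. The total diffuse mass $m_d = \sum_\gamma m_d(\mu_\gamma)$ is invariant for the same reason. (Since $\mu$ is a probability measure, the finite-total-mass hypothesis of \cref{defn:series-para-meas} forces all but countably many $\mu_\gamma$ to vanish, so there is no measurability issue in talking about ``the'' multiset.)

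I would then apply \cref{prop:total-run-funcs}, which gives
\[ \open{P}_\mu(z) = e^{m_d z} \prod_{\alpha \in \mathcal{A}} (1 + m_\alpha z) \qquad \text{and} \qquad \closed{P}_\mu(z) = e^{m_d z} \prod_{\alpha \in \mathcal{A}} \frac{1}{1 - m_\alpha z}. \]
These formulas are symmetric functions of the atom masses and depend on $\mu$ only through $m_d$ and the multiset $(m_\alpha)_{\alpha \in \mathcal{A}}$, so by the previous paragraph $\open{P}_\mu = \open{P}_{\mu'}$ and $\closed{P}_\mu = \closed{P}_{\mu'}$. (If $\mu$ is degenerate then $\mu'$ is also degenerate with the same atom of the same mass, and one verifies $\open{P}_\mu = 1 + Z = \open{P}_{\mu'}$ and $\closed{P}_\mu = \frac{1}{1-Z} = \closed{P}_{\mu'}$ directly.)

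Finally, \cref{thm:length-pgf} writes $G_{\open{N}_i}$ and $G_{\closed{N}_i}$ purely in terms of the run functions and $\open{P}_\mu''(0)$, $\closed{P}_\mu''(0)$, so the conditional distributions of $\open{N}_i$ given $\open{S}_i$ and of $\closed{N}_i$ given $\closed{S}_i$ are invariant under rearrangement. Since $\{\open{N}_i = n\} \subseteq \open{S}_i$, we have $\prob[\mu]{\open{N}_i = n} = \probWhen[\mu]{\open{N}_i = n}{\open{S}_i}\cdot\prob[\mu]{\open{S}_i}$, and for $i \geq 1$ the second factor equals $1 - \tfrac{1}{2} \open{P}_\mu''(0)$ (while $\prob{\open{S}_0} = 1$), which is likewise determined by $\open{P}_\mu$ and hence invariant; the non-strict case is identical. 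No step presents a real obstacle: the proposition is essentially encoded in the manifest symmetry of the closed-form in \cref{prop:total-run-funcs}, and the only subtlety worth mentioning is remembering to handle the unconditional event probabilities $\prob{\open{S}_i}$ and $\prob{\closed{S}_i}$ separately from the conditional PGFs.
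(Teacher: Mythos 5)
Your proof is correct and follows essentially the same route as the paper's: observe that rearrangement preserves the multiset of atom masses and the diffuse mass, invoke \cref{prop:total-run-funcs} to conclude $\open{P}_\mu = \open{P}_{\mu'}$ and $\closed{P}_\mu = \closed{P}_{\mu'}$, and then pass through \cref{thm:length-pgf} to the distributions of the run lengths. Your explicit handling of the unconditional factor $\prob{\open{S}_i} = 1 - \tfrac{1}{2}\open{P}_\mu''(0)$ (and of the degenerate case excluded from \cref{prop:total-run-funcs}) is a small refinement the paper glosses over, but it does not change the argument.
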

\begin{proof}
Suppose $ T $ and $ T' $ are total orders with finite non-negative measures $ \mu $ and $ \mu' $.
Also, assume that $ \mu $ and $ \mu' $ are rearrangements.
If $ \mathcal{A} $ and $ \mathcal{A}' $ are their sets of atoms
then there must exist a bijection $ \phi : \mathcal{A} \to \mathcal{A}' $.
Further, $ \mu(\{\alpha\}) = \mu'(\{\phi(\alpha)\})$ for any $ \alpha \in \mathcal{A} $.
Thus, the weights of the atoms are the same and
\[
\mu(T \setminus \mathcal{A}) = \mu(T) - \sum_{\alpha \in \mathcal{A}} \mu(\{\alpha\})
= \mu(T) - \sum_{\alpha \in \mathcal{A}} \mu(\{\phi(\alpha)\})
= \mu'(T') - \sum_{\alpha' \in \mathcal{A}'} \mu'(\{\alpha'\})
= \mu'(T' \setminus \mathcal{A}')
\]
so their diffuse masses are the same.
Therefore, by \cref{prop:total-run-funcs}, we know $ \open{P}_\mu(Z) = \open{P}_{\mu'}(Z) $ and $ \closed{P}_\mu(Z) = \closed{P}_\mu(Z) $.
Then, by \cref{thm:length-pgf} and \cref{coro:length-stats}, the PGFs of $ \open{N}_i $ and $ \closed{N}_i $
will be the same for $ \mu $ and $ \mu' $.
Because the PGFs determine the probabilities of $ \{\open{N}_i = n\} $ and $ \{\closed{N}_i = n\} $, we know
\[
\prob[\mu]{\open{N}_i = n} = \prob[\mu']{\open{N}_i = n}
\quad \text{ and } \quad
\prob[\mu]{\closed{N}_i = n} = \prob[\mu']{\closed{N}_i = n}.
\]
\end{proof}

Rearrangement insensitivity may not seem like an especially unusual property.
However, most events that one encounters are \emph{sensitive} to rearrangement.
In the following proposition, we will look at a class of examples of rearrangement sensitive events.

\begin{prop} \label{prop:record-sens}
Suppose $ \mu $ is a probability measure on a total order $ T $
and $ \{X_i\}_{i=0}^\infty $ is a random sequence in $ T $
independently and identically distributed as $ \mu $.
Then, for any $ n \geq 2 $, the event
\[ R_n := \{X_n \geq \max(X_0, X_1, \ldots, X_{n-1})\} \]
is sensitive to rearrangement.
\end{prop}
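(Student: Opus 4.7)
The plan is to revisit the two-atom example $\langle \mu_{xy}, T_{xy} \rangle$ and $\langle \mu_{yx}, T_{yx} \rangle$ introduced at the start of this subsection and exhibit an explicit choice of $p \in (0,1)$ for which $\mathbb{P}_{\mu_{xy}}(R_n) \neq \mathbb{P}_{\mu_{yx}}(R_n)$. Because each of these measures is the series composition of two singleton total orders carrying masses $p$ and $1-p$, taken in the two possible orders, they are rearrangements of one another in the sense defined earlier.

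The key computational step is a direct case analysis on the value of $X_n$. Under $\mu_{xy}$, where $y$ is the larger element, the event $R_n$ occurs when $X_n = y$ (which automatically exceeds the maximum of the previous entries) or when $X_n = x$ and every one of $X_0, \ldots, X_{n-1}$ also equals $x$. These two cases are disjoint and have probabilities $1-p$ and $p^{n+1}$, so
\[
\mathbb{P}_{\mu_{xy}}(R_n) = (1-p) + p^{n+1}.
\]
The symmetric analysis under $\mu_{yx}$, where now $x$ is on top, yields $\mathbb{P}_{\mu_{yx}}(R_n) = p + (1-p)^{n+1}$.

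It then remains to exhibit a value of $p$ making these two expressions unequal for every $n \geq 2$. Fixing $p = 1/3$ reduces the difference to the quantity $(3^n + 1 - 2^{n+1})/3^{n+1}$, whose numerator is positive for $n \geq 2$ by a short induction based on $3 \cdot 3^n \geq 2 \cdot 2^{n+1}$. The mildly fiddly part of the argument is just this last check — essentially the only obstacle — and could equally well be avoided by observing that for each fixed $n$ the difference $(1 - 2p) - ((1-p)^{n+1} - p^{n+1})$ is a polynomial in $p$ that does not vanish identically (e.g.\ its value at $p = 1/3$ is nonzero), hence fails to vanish for all but finitely many $p \in (0,1)$.
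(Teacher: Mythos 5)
Your proposal is correct and follows essentially the same route as the paper: the same two-atom counterexample $\mu_{xy}$ versus $\mu_{yx}$, the same formulas $(1-p)+p^{n+1}$ and $p+(1-p)^{n+1}$ (obtained by a disjoint case analysis on $X_n$ rather than the paper's explicit conditioning, which is an immaterial difference), the same choice $p = \tfrac{1}{3}$, and the same final check that $3^n > 2^{n+1}$ for $n \geq 2$. No gaps.
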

\begin{proof}
It suffices to consider a single counter-example.
We will use $ \mu_{xy} $ and $ \mu_{yx} $ from above.
Now, for any $ n \geq 2 $, using independence of $ X_0, \ldots, X_{n-1} $,
\[ \begin{split}
\prob[\mu_{xy}]{R_n} &= \prob[\mu_{xy}]{X_n \geq X_0, X_n \geq X_1, \ldots, X_n \geq X_{n-1}} \\
&= p \cdot \probWhen[\mu_{xy}]{X_n \geq X_0, \ldots, X_n \geq X_{n-1}}{X_n = x}
+ (1 - p) \cdot \probWhen[\mu_{yx}]{X_n \geq X_0, \ldots, X_n \geq X_{n-1}}{X_n = y} \\
&= p \cdot \prob[\mu_{xy}]{x \geq X_0} \cdots \prob[\mu_{xy}]{x \geq X_{n-1}}
+ (1 - p) \cdot \prob[\mu_{xy}]{y \geq X_0} \cdots \prob[\mu_{xy}]{y \geq X_{n-1}} \\
&= p \cdot p^n + (1 - p) \cdot 1^n = p^{n+1} + (1 - p).
\end{split} \]
By a similar calculation, one concludes that $ \prob[\mu_{yx}]{R_n} = (1 - p)^{n+1} + p $.
Then, setting $ p = \frac{1}{3} $ and taking their difference
\[
\prob[\mu_{xy}]{R_n} - \prob[\mu_{yx}]{R_n} = \left(\frac{1}{3^{n+1}} + \frac{2}{3}\right) - \left(\frac{2^{n+1}}{3^{n+1}} + \frac{1}{3}\right)
= \frac{1}{3} + \frac{1 - 2^{n+1}}{3^{n+1}} = \frac{3^n - 2^{n+1} + 1}{3^{n+1}}.
\]
For all $ n \geq 2 $, $ 3^n > 2^{n+1} $ so $ 3^n - 2^{n+1} + 1 > 0 $ implying $ \prob[\mu_{xy}]{R_n} \neq \prob[\mu_{yx}]{R_n} $.
\end{proof}

The choice of $ \mu_{xy} $ and $ \mu_{yx} $ here is not unique.
One can check that most measures that have non-trivial rearrangements will exhibit rearrangement sensitivity in $ R_n $.
It is also not difficult to find other events which are sensitive to rearrangement.
Though, the condition for whether an event is sensitive and insensitive appears rather subtle.
Notice from \cref{prop:runs-insens} that $ \{\closed{N}_0 = 2\} = \{X_0 \leq X_1 > X_2\} $ and $ \{\closed{N}_0 \geq 3\} = \{X_0 \leq X_1 \leq X_2\} $ are insensitive.
However, from \cref{prop:record-sens}, we know $ \prob{R_2} = \prob{X_2 \geq X_0, X_2 \geq X_1} = \prob{X_0 \leq X_2 \geq X_1} = \prob{X_0 \leq X_1 \geq X_2} $ is sensitive.
Thus, $ \{X_0 \leq X_1 \geq X_2\} $ is sensitive, but if we flip the second inequality or remove the equality from it
then we get the insensitive events $ \{X_0 \leq X_1 \leq X_2\} $ and $ \{X_0 \leq X_1 > X_2\} $.

\section{Examples}
\label{sect:examples}

\subsection{Radioactive Decay}
Here, the total order is $ T = \mathbb{R}_{\geq 0} $ and $ \mu $ is exponential
with $ \mu([0, t)) = 1 - e^{-\lambda t} $ for some $ \lambda > 0 $.
However, as a result of \cref{lem:total-diffuse-func},
it is only important that $ \mu $ has no atoms and $ T $ is a total order.
By \cref{thm:total-length-pgf}, the PGFs are
\[
G_{\open{N}_0}(Z) = G_{\closed{N}_0}(Z) = \frac{1 + (Z - 1) e^Z}{Z}
\quad \text{ and when } i \geq 1, \;
G_{\open{N}_i}(Z) = G_{\closed{N}_i}(Z) = 1 + \frac{2 (Z - 1) + 2 (Z - 1)^2 e^Z}{Z^2}.
\]
Using \cref{coro:total-length-stats}, the means are
\[
\expect{\open{N}_0} = \expect{\closed{N}_0} = e - 1
\quad \text{ and when } i \geq 1, \;
\expectWhen{\open{N}_i}{\open{S}_i} = \expectWhen{\closed{N}_i}{\closed{S}_i} = 2
\]
and the variances are
\[
\var{\open{N}_0} = \var{\closed{N}_0} = e (3 - e)
\quad \text{ and when } i \geq 1, \;
\varWhen{\open{N}_i}{\open{S}_i} = \varWhen{\closed{N}_i}{\closed{S}_i} = 4e - 10.
\]

\subsection{N-Sided Die}
Recall that $ T = \{1, \ldots, n\} $ and $ \mu(\{k\}) = \frac{1}{n} $ for all $ k \in [1, n] $.
Therefore, the atoms will be $ \mathcal{A} = \{1, \ldots, n\} $ with $ m_\alpha = \frac{1}{n} $ for all $ \alpha \in \mathcal{A} $
and there is no diffuse mass so $ m_d = 0 $.
From \cref{thm:total-length-pgf}, we know the PGFs are
\[
G_{\open{N}_0}(Z) = \frac{1 + (Z - 1) \left(1 + \frac{Z}{n}\right)^n}{Z}
\quad \text{ and } \quad
G_{\closed{N}_0}(Z) = \frac{1 + \frac{Z - 1}{(1 - Z / n)^n}}{Z}
\]
\[
G_{\open{N}_i}(Z) = 1 + \frac{2 n (Z - 1)}{(n + 1) Z} G_{\open{N}_0}(Z)
\quad \text{ and } \quad
G_{\closed{N}_i}(Z) = 1 + \frac{2 n (Z - 1)}{(n - 1) Z} G_{\closed{N}_0}(Z).
\]
Using \cref{coro:total-length-stats}, we can say
\[
\expect{\open{N}_0} = \left(1 + \frac{1}{n}\right)^n - 1
\quad \text{ and } \quad
\expect{\closed{N}_0} = \left(\frac{1}{1 - \frac{1}{n}}\right)^n - 1 = \left(1 + \frac{1}{n - 1}\right)^n - 1
\]
as well as for $ i \geq 1 $,
\[
\expectWhen{\open{N}_i}{\open{S}_i} = \frac{2}{1 + n \frac{1}{n^2}} = \frac{2n}{n + 1} = 2 - \frac{2}{n + 1}
\quad \text{ and } \quad
\expectWhen{\closed{N}_i}{\closed{S}_i} = \frac{2}{1 - n \frac{1}{n^2}} = \frac{2n}{n - 1} = 2 + \frac{2}{n - 1}.
\]
Notice that in all cases, as $ n \to \infty $, the expected values approaches those
for a diffuse distribution: $ \expect{\open{N}_0}, \expect{\closed{N}_0} \to e - 1 $
and $ \expectWhen{\open{N}_i}{\open{S}_i}, \expectWhen{\closed{N}_i}{\closed{S}_i} \to 2 $.
This reflects the fact that a diffuse distribution (e.g.\ the uniform distribution on $ [0, 1] $) can be approximated by separating it into very small equal slices.
\medskip

Next, the run functions are $ \open{P}_\mu(Z) = \left(1 + \frac{Z}{n}\right)^n $ and $ \closed{P}_\mu(Z) = \left(\frac{1}{1 - Z / n}\right)^n $
and their derivatives are
\[ \begin{split}
\open{P}_\mu'(Z) &= \frac{d}{dZ} \left(1 + \frac{Z}{n}\right)^n = \frac{n}{n} \left(1 + \frac{Z}{n}\right)^{n-1} \quad \text{ and} \\
\closed{P}_\mu'(Z) &= \frac{d}{dZ} \left(\frac{1}{1 - Z / n}\right)^n = \frac{d}{dZ} \left(\frac{n}{n - Z} \right)^n
= n \frac{n}{(n - Z)^2} \left(\frac{n}{n - Z}\right)^{n-1} = \left(\frac{n}{n - Z}\right)^{n+1}
\end{split} \]
implying that $ \open{P}_\mu'(1) = \left(1 + \frac{1}{n}\right)^{n-1} $ and $ \closed{P}_\mu'(1) = \left(1 + \frac{1}{n-1}\right)^{n+1} $.
We define $ a := 1 + \frac{1}{n} $ and $ b := 1 + \frac{1}{n-1} $.
Using \cref{coro:length-stats}, the variances for $ N_0 $ are
\[
\var{\open{N}_0} = \open{P}_\mu(1) - \open{P}_\mu^2(1) + 2 \open{P}_\mu'(1) = a^n - a^{2n} + 2 a^{n-1}
\quad \text{ and } \quad
\var{\closed{N}_0} = \closed{P}_\mu(1) - \closed{P}_\mu^2(1) + 2 \closed{P}_\mu'(1) = b^n - b^{2n} + 2 b^{n+1}.
\]
Also when $ i \geq 1 $, the variances are
\[ \begin{split}
\varWhen{\open{N}_i}{\open{S}_i} &= \frac{2}{1 + n \frac{1}{n^2}} \left(-3 + 2 \left(1 + \frac{1}{n}\right)^n - \frac{2}{1 + n \frac{1}{n^2}}\right)
= \frac{2}{a} \left(-3 + 2 a^n - \frac{2}{a}\right) = 4 a^{n-1} - \frac{6}{a} - \frac{4}{a^2}
\quad \text{ and} \\
\varWhen{\closed{N}_i}{\closed{S}_i} &= \frac{2}{1 - n \frac{1}{n^2}} \left(-3 + 2 \left(1 + \frac{1}{n-1}\right)^n - \frac{2}{1 - n \frac{1}{n^2}}\right)
= 2b \left(-3 + 2 b^n - 2b\right) = 4 b^{n+1} - 6b - 4b^2.
\end{split} \]

\subsection{Dart Board with Bullseye}
Here, $ T = (0, 1] $ with $ \mu(\{1\}) = p $ and $ \mu((0, 1)) = 1 - p $ with the mass distributed uniformly.
Thus, $ \mu $ has one atom $ \mathcal{A} = \{1\} $ and diffuse mass $ m_d = 1 - p $.
Using \cref{thm:total-length-pgf}, the PGFs will be
\[ \def\arraystretch{2.2} \begin{array}{clclc}
& G_{\open{N}_0}(Z) = \dfrac{1 + (Z - 1) (1 + pZ) e^{(1 - p) Z}}{Z} & \text{and} &
G_{\closed{N}_0}(Z) = \dfrac{1 + \frac{Z - 1}{1 - pZ} e^{(1 - p) Z}}{Z} &
\text{as well as} \\
\text{for } i \geq 1, &
G_{\open{N}_i}(Z) = 1 + \dfrac{2 (Z - 1)}{Z (1 + p^2)} G_{\open{N}_0}(Z) & \text{and} &
G_{\closed{N}_i}(Z) = 1 + \dfrac{2 (Z - 1)}{Z (1 - p^2)} G_{\closed{N}_0}(Z). &
\end{array} \]
From \cref{coro:total-length-stats}, the expected run lengths are
\[ \def\arraystretch{2.2} \begin{array}{clclc}
& \expectWhen{\open{N}_0}{\open{S}_0} = e^{1 - p} (1 + p) - 1 & \text{and} &
\expectWhen{\closed{N}_0}{\closed{S}_0} = \dfrac{e^{1 - p}}{1 - p} - 1 &
\text{as well as} \\
\text{for } i \geq 1, &
\expectWhen{\open{N}_i}{\open{S}_i} = \dfrac{2}{1 + p^2} & \text{and} &
\expectWhen{\closed{N}_i}{\closed{S}_i} = \dfrac{2}{1 - p^2}. &
\end{array} \]
For the variances, we have
\[ \def\arraystretch{2.2} \begin{array}{clclc}
& \var{\open{N}_0} = e^{1 - p} (1 + p) \left(3 - e^{1 - p} (1 + p) - \dfrac{2 p^2}{1 + p} \right) & \text{and} &
\var{\closed{N}_0} = \dfrac{e^{1 - p}}{1 - p} \left(3 - \dfrac{e^{1 - p}}{1 - p} + \dfrac{2 p^2}{1 - p} \right)
\; \text{as well as} \\
\text{for } i \geq 1, &
\varWhen{\open{N}_i}{\open{S}_i} = \dfrac{2}{1 + p^2} \left( -3 + 2 e^{1 - p} (1 + p) - \dfrac{2}{1 + p^2} \right) & \text{and} &
\varWhen{\closed{N}_i}{\closed{S}_i} = \dfrac{2}{1 - p^2} \left( -3 + 2 \dfrac{e^{1 - p}}{1 - p} - \dfrac{2}{1 - p^2} \right).
\end{array} \]

\subsection{Faulty Light Bulb}
In the light bulb example, we recall the order is $ T = \mathbb{N}_{>0} $
and $ \mu $ is a geometric distribution with $ \mu(\{k\}) = p (1 - p)^{k-1} $ for $ k > 0 $.
Because it has the same order theoretic properties, we will consider $ T = \mathbb{N} $
with $ \mu(\{k\}) = p (1 - p)^k $ for $ k \geq 0 $.
Then, $ \mu $ has atoms $ \mathcal{A} = \{0, 1, 2, \ldots\} $ and no diffuse mass.
From \cref{prop:total-run-funcs}, we know for any $ z \in \mathbb{C} \setminus \{0\} $,
the run functions will satisfy
\[
\open{P}_\mu(z) = \prod_{k=0}^\infty \left( \big. 1 + p (1 - p)^k z \right)
\quad \text{ and when } |z| \leq 1, \quad
\closed{P}_\mu(z) = \left(\prod_{k=0}^\infty \left( \big. 1 - p (1 - p)^k z \right)\right)^{-1}.
\]
Here, we observe that these functions are related to the well-known $ q $-Pochhammmer symbol $ (a; q)_n $ which is defined to be
\[ (a; q)_n = \prod_{k=0}^{n-1} (1 - a q^k) \]
for all $ n > 0 $.
This function plays a significant role in the theory of $ q $-series (see \cite{GAnd} for more information).
We can then say $ \open{P}_\mu(z) = (-pz; 1 - p)_\infty $ and $ \closed{P}_\mu(z) = 1 / (pz; 1 - p)_\infty $.
We can also calculate
\[ \sum_{k=0}^\infty m_k^2 = \sum_{k=0}^\infty p^2 (1 - p)^{2k} = \frac{p^2}{1 - (1 - p)^2} = \frac{p}{2 - p}. \]
Then from \cref{thm:length-pgf}, we get $ G_{\open{N}_0}(z) = \frac{1}{z} \left( \big. 1 + (z - 1) (-pz; 1 - p)_\infty \right) $
and when $ |z| \leq 1 $, $ G_{\closed{N}}(z) = \frac{1}{z} \left( \big. 1 + \frac{z - 1}{(pz; 1 - p)_\infty} \right) $.
Further, for $ i \geq 1 $, we have
\[ \begin{split}
G_{\open{N}_i}(z) &= 1 + \frac{2 (z - 1)}{z \left(1 + \frac{p}{2 - p} \right)} G_{\open{N}_0}(z) 
= 1 + \frac{(2 - p)(z - 1)}{z} G_{\open{N}_0}(z) \\
\text{ and when } |z| \leq 1, \quad
G_{\closed{N}_i}(z) &= 1 + \frac{2 (z - 1)}{z \left(1 - \frac{p}{2 - p}\right)} G_{\closed{N}_0}(z)
= 1 + \frac{(2 - p)(z - 1)}{z (1 - p)} G_{\closed{N}_0}(z).
\end{split} \]
From \cref{coro:length-stats} or equivalently \cref{coro:total-length-stats}, we get
\[ \def\arraystretch{2.2} \begin{array}{clclc}
& \expect{\open{N}_0} = (-p; 1 - p)_\infty - 1 & \text{and} &
\expect{\closed{N}_0} = \dfrac{1}{(p; 1 - p)_\infty} - 1 &
\text{as well as} \\
\text{for } i \geq 1, &
\expectWhen{\open{N}_i}{\open{S}_i} = \dfrac{2}{1 + \frac{p}{2 - p}} = 2 - p & \text{and} &
\expectWhen{\closed{N}_i}{\closed{S}_i} = \dfrac{2}{1 - \frac{p}{2 - p}} = \dfrac{2 - p}{1 - p}. &
\end{array} \]
Further, the variances are
\[ \begin{split}
\var{\open{N}_0} &= (-p; 1 - p)_\infty \left(3 - (-p; 1 - p)_\infty - 2 \sum_{k=0}^\infty \frac{p^2 (1 - p)^{2k}}{1 + p (1 - p)^k} \right)
\quad \text{ and} \\
\var{\closed{N}_0} &= \frac{1}{(p; 1 - p)_\infty} \left(3 - \frac{1}{(p; 1 - p)_\infty} + 2 \sum_{k=0}^\infty \frac{p^2 (1 - p)^{2k}}{1 - p(1 - p)^k} \right)
\end{split} \]
as well as for $ i \geq 1 $,
\[ \begin{split}
\varWhen{\open{N}_i}{\open{S}_i} &= (2 - p) \left(-3 + 2 (-p; 1 - p)_\infty - (2 - p) \Big. \right)
= (2 - p) \left(p - 5 + 2 (-p; 1 - p)_\infty \Big. \right)
\quad \text{ and} \\
\varWhen{\closed{N}_i}{\closed{S}_i} &= \frac{2 - p}{1 - p} \left(-3 + \frac{2}{(p; 1 - p)_\infty} - \frac{2 - p}{1 - p}\right)
= \frac{2 - p}{1 - p} \left(\frac{4p - 5}{1 - p} + \frac{2}{(p; 1 - p)_\infty} \right).
\end{split} \]

\subsection{Even-Sided Die as Partial Order}
The partial order $ T $ in this case is $ \{1, \ldots, 2n\} $ where $ \{1, 3, \ldots, 2n - 1\} $ and $ \{2, 4, \ldots, 2n\} $ are chains
and the probability measure $ \mu $ is $ \mu(\{k\}) = \frac{1}{2n} $ for all $ k \in [1, 2n] $.
We say $ \mu_{\mathrm{odd}} $ and $ \mu_{\mathrm{even}} $ are the restrictions of $ \mu $ to $ \{1, 3, \ldots, 2n - 1\} $ and $ \{2, 4, \ldots, 2n\} $, respectively.
Then, $ \mu = \mu_{\mathrm{odd}} \oplus \mu_{\mathrm{even}} $.
By \cref{eq:total-run-funcs}, the run functions for $ \mu_{\mathrm{odd}} $ and $ \mu_{\mathrm{even}} $ are
\[
\open{P}_{\mu_{\mathrm{odd}}}(Z) = \open{P}_{\mu_{\mathrm{even}}}(Z) = \left(1 + \frac{Z}{2n} \right)^n
\quad \text{ and } \quad
\closed{P}_{\mu_{\mathrm{odd}}}(Z) = \closed{P}_{\mu_{\mathrm{even}}}(Z) = \left(\frac{1}{1 - Z / (2n)}\right)^n = \left(\frac{2n}{2n - Z}\right)^n.
\]
Thus, by \cref{thm:series-parallel},
\[ \begin{matrix}
\open{P}_\mu(Z) = \open{P}_{\mu_{\mathrm{odd}}}(Z) + \open{P}_{\mu_{\mathrm{even}}}(Z) - 1 = 2 \left(1 + \frac{Z}{2n} \right)^n - 1 &
\closed{P}_\mu(Z) = \closed{P}_{\mu_{\mathrm{odd}}}(Z) + \closed{P}_{\mu_{\mathrm{even}}}(Z) - 1 = 2 \left(\frac{2n}{2n - Z}\right)^n - 1 \\
\open{P}_\mu'(Z) = \frac{2n}{2n} \left(1 + \frac{Z}{2n}\right)^{n-1} &
\closed{P}_\mu'(Z) = 2n \frac{2n}{(2n - Z)^2} \left(\frac{2n}{2n - Z}\right)^{n-1} = \left(\frac{2n}{2n - Z}\right)^{n+1} \\
\open{P}_\mu''(Z) = \frac{n - 1}{2n} \left(1 + \frac{Z}{2n}\right)^{n-2} &
\closed{P}_\mu''(Z) = (n + 1) \frac{2n}{(2n - Z)^2} \left(\frac{2n}{2n - Z}\right)^n = \frac{n + 1}{2n} \left(\frac{2n}{2n - Z}\right)^{n + 2}.
\end{matrix} \]
Therefore, using \cref{coro:length-stats}, the expected values for $ N_0 $ are
\[
\expect{\open{N}_0} = \open{P}_\mu(1) - 1 = 2 \left(1 + \frac{1}{2n}\right)^n - 2
\quad \text{ and } \quad
\expect{\closed{N}_0} = \closed{P}_\mu(1) - 1 = 2 \left(1 + \frac{1}{2n - 1}\right)^n - 2.
\]
The expected values for $ N_i $ when $ i \geq 1 $ are
\[
\expectWhen{\open{N}_i}{\open{S}_i = 1} = \frac{2}{2 - \open{P}_\mu''(0)} = \frac{2}{2 - \frac{n - 1}{2n}} = \frac{4n}{3n + 1}
\quad \text{ and } \quad
\expectWhen{\closed{N}_i}{\closed{S}_i = 1} = \frac{2}{2 - \closed{P}_\mu''(0)} = \frac{2}{2 - \frac{n + 1}{2n}} = \frac{4n}{3n - 1}.
\]
Using \cref{thm:length-pgf} and \cref{coro:length-stats} with $ \open{P}_\mu(Z) $, $ \closed{P}_\mu(Z) $, $ \open{P}_\mu'(Z) $, and $ \closed{P}_\mu'(Z) $,
one can also derive the PGFs and variances for $ \open{N}_i $ and $ \closed{N}_i $.

\section{Outline of Proofs}
For the sake of concision, we will omit the $ \bullet $ and $ \circ $ in equations
when the results apply to both the strict and non-strict cases.
Since we will often work with subsets of $ T^n $,
we may write set schema as
\[ \{a_1 < a_2, a_3 < a_4\}_{T^n} := \{\langle a_1, \ldots, a_n \rangle \in T^n \;:\; a_1 < a_2, a_3 < a_4\}. \]
As above, we will assume that $ \{a_1 < a_2\} \subseteq T^2 $
is always measurable with respect to the Borel $ \sigma $-algebra.
Because of the value of addressing orders abstractly,
we will prove the some of the results of \cref{sub:series-parallel} before addressing
the totally ordered case.
Here is an outline of the proofs.
\begin{itemize}
	\item[] \textbf{Statistics of Run Lengths from Run Functions:}
			\textsl{Proofs of \cref{thm:length-pgf} and \cref{coro:length-stats}}
	\begin{itemize}
		\item In \cref{lem:func-conv-rad}, we show that the run functions are convergent in a sufficiently large radius.
		\item We establish the relationship between the run functions and the PGFs of $ \open{N}_i $ and $ \closed{N}_i $
		proving \cref{thm:length-pgf}.
		\item Using the formulae for the PGFs, we derive the means and variances of $ \open{N}_i $ and $ \closed{N}_i $
		proving \cref{coro:length-stats}.
	\end{itemize}

	\item[] \textbf{Law of Large Numbers:}
			\textsl{Proof of \cref{thm:slln}} \newline
	For a given realization of a random sequence, we demonstrate that
	the proportion of runs of length $ n $ almost surely approaches
	the distribution given by \cref{eq:length-pgf}.

	\item[] \textbf{Run Functions of Limits of Measures:}
			\textsl{Proof of \cref{prop:func-limit}}
	\begin{itemize}
		\item In \cref{lem:func-diff-bound}, we show that
		the distance between $ P_{\mu_1}(z) $ and $ P_{\mu_2}(z) $
		is bounded by $ \|\mu_1 - \mu_2\| $.
		\item We prove \cref{prop:func-limit} that for any convergent increasing sequence of measures,
		their run functions converge uniformly on bounded sets.
	\end{itemize}

	\item[] \textbf{Combining Run Functions:}
			\textsl{Proof of \cref{prop:comb-funcs}}
	\begin{itemize}
		\item We show that for finite non-negative measures $ \mu_1 $ and $ \mu_2 $,
		the run functions of the series and parallel composition
		are $ P_{\mu_1 \otimes \mu_2}(Z) = P_{\mu_1}(Z) \cdot P_{\mu_2}(Z) $
		and $ P_{\mu_1 \oplus \mu_2}(Z) - 1 = (P_{\mu_1}(Z) - 1) + (P_{\mu_2}(Z) - 1) $.
		\item Using \cref{prop:func-limit}, we extend to countable $ \Gamma $ which proves \cref{prop:comb-funcs}.
	\end{itemize}
	
	\item[] \textbf{Run Lengths in Total Orders \& Countably Series-Parallel Partial Orders:}
			\textsl{Proofs of \cref{thm:total-length-pgf}, \cref{coro:total-length-stats},
			\cref{prop:total-run-funcs}, and \cref{thm:series-parallel}}
	\begin{itemize}
		\item We calculate the run functions for
		a diffuse measure and a degenerate measure on a total order.
		\item Using \cref{prop:comb-funcs}, we describe the run functions
		of an arbitrary measure on a total order
		proving \cref{prop:total-run-funcs} and \cref{thm:total-length-pgf}.
		\item We apply \cref{coro:length-stats} to the run functions
		to obtain the means, variances, and PGFs of run lengths for total orders
		proving \cref{coro:total-length-stats}.
		\item We prove \cref{thm:series-parallel} by showing that a measure
		on a countably series-parallel partial order is equal
		to the series or parallel composition of its restrictions.
	\end{itemize}
\end{itemize}

\section{Statistics of Run Lengths from Run Functions}

\begin{defn}
Suppose $ \mu $ is a finite non-negative measure over a partial order $ T $
and $ \{\langle x, y \rangle \in T^2 \;:\; x < y\} $ is measurable.
Then, for each integer $ n \geq 1 $, we define the \textbf{strict run coefficients}, $ \open{L}_n(\mu) $,
and \textbf{non-strict run coefficients}, $ \closed{L}_n(\mu) $, as
\[
\open{L}_n(\mu) := \begin{cases}
\hfil 1 & \text{ for } n = 0 \\
\hfil \mu(T) & \text{ for } n = 1 \\
\mu^n(\{a_1 < \ldots < a_n\}_{T^n}) & \text{ otherwise}
\end{cases}
\quad \text{ and } \quad
\closed{L}_n(\mu) := \begin{cases}
\hfil 1 & \text{ for } n = 0 \\
\hfil \mu(T) & \text{ for } n = 1 \\
\mu^n(\{a_1 \leq \ldots \leq a_n\}_{T^n}) & \text{ otherwise}
\end{cases}.
\]
When clear from context, we may omit $ \mu $ and simply say $ \open{L}_n $ and $ \closed{L}_n $.
Note that, using Tonelli's theorem, the measurability of $ \{x < y\} $ implies
that $ \{a_1 < \ldots < a_n\} $ and $ \{a_1 \leq \ldots \leq a_n\} $ are measurable for all $ n \geq 2 $.
\end{defn}

Observe that with $ \open{P}_\mu(Z) $ and $ \closed{P}_\mu(Z) $ defined
as in \cref{eq:strict-run-func-defn} and \cref{eq:non-strict-run-func-defn},
\[
\open{P}_\mu(Z) = \sum_{n=0}^\infty \open{L}_n(\mu) Z^n
\quad \text{ and } \quad
\closed{P}_\mu(Z) = \sum_{n=0}^\infty \closed{L}_n(\mu) Z^n.
\]

\begin{lem} \label{lem:func-conv-rad}
For any finite non-negative measure $ \mu $ on a partial order $ T $,
the radius of convergence of $ \open{P}_\mu(Z) $ is infinite
and the radius of convergence of $ \closed{P}_\mu(Z) $ is at least
\[ \sqrt{\frac{2}{\|\mu\| \cdot \left(\|\mu\| + \sup_{\alpha \in \mathcal{A}} \mu(\{\alpha\}) \right)}} \geq \frac{1}{\|\mu\|} \]
where equality occurs only if $ \mu $ is degenerate.
\end{lem}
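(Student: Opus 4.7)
The plan is to bound the coefficients $\open{L}_n$ and $\closed{L}_n$ directly and then read off the radius of convergence of each run function via the root test.

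For the strict case, I would normalize $\mu$ to a probability measure $\nu := \mu/\|\mu\|$ and exploit the exchangeability of the product measure $\nu^n$ on $T^n$. For any two distinct permutations $\sigma, \tau$ of $\{1,\ldots,n\}$, the sets $\{a_{\sigma(1)} < \cdots < a_{\sigma(n)}\}_{T^n}$ and $\{a_{\tau(1)} < \cdots < a_{\tau(n)}\}_{T^n}$ are disjoint, since any point lying in either set witnesses a unique linear ordering of its coordinates in $T$. All $n!$ such sets share the same $\nu^n$-measure by exchangeability, so their total measure is at most $1$, giving $\open{L}_n \leq \|\mu\|^n/n!$. Hence $\open{P}_\mu(Z)$ is dominated termwise by $e^{\|\mu\|\,|Z|}$, which is entire.

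For the non-strict case, I would first prove the sharp two-point bound $\closed{L}_2 \leq \tfrac{1}{2}\|\mu\|(\|\mu\|+m^*)$, where $m^* := \sup_{\alpha \in \mathcal{A}} m_\alpha$. By antisymmetry, $\{a_1 \leq a_2\}_{T^2} \cap \{a_2 \leq a_1\}_{T^2} = \{a_1 = a_2\}_{T^2}$, and the union of these two sets lies inside $T^2$, so inclusion-exclusion together with exchangeability yields $2\closed{L}_2 \leq \|\mu\|^2 + \mu^2(\{a_1 = a_2\}_{T^2})$; a Fubini computation identifies the diagonal mass as $\sum_{\alpha \in \mathcal{A}} m_\alpha^2$, which is bounded by $m^* \|\mu\|$. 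To lift this to all $n$, I would use the submultiplicativity $\closed{L}_{n+m} \leq \closed{L}_n \closed{L}_m$, which follows from the inclusion $\{a_1 \leq \cdots \leq a_{n+m}\}_{T^{n+m}} \subseteq \{a_1 \leq \cdots \leq a_n\}_{T^{n+m}} \cap \{a_{n+1} \leq \cdots \leq a_{n+m}\}_{T^{n+m}}$ together with the product structure of $\mu^{n+m}$. Iterating yields $\closed{L}_{2k} \leq \closed{L}_2^{k}$ and $\closed{L}_{2k+1} \leq \|\mu\|\, \closed{L}_2^{k}$, so $\limsup_n \closed{L}_n^{1/n} \leq \sqrt{\closed{L}_2}$, producing the claimed lower bound on the radius of convergence.

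Finally, the inequality $\sqrt{\tfrac{2}{\|\mu\|(\|\mu\|+m^*)}} \geq \tfrac{1}{\|\mu\|}$ reduces to $m^* \leq \|\mu\|$, which always holds; equality requires $m^* = \|\mu\|$, forcing all the mass of $\mu$ onto a single atom, i.e., $\mu$ is degenerate. The main subtle point, I expect, is the careful inclusion-exclusion accounting in the two-point bound over a general partial order, where $\{a_1 \leq a_2\} \cup \{a_2 \leq a_1\}$ need not exhaust $T^2$ because of incomparable pairs; this slack is precisely what yields the tight constant involving $m^*$ rather than just $\|\mu\|$.
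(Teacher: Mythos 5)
Your proposal is correct and follows essentially the same route as the paper's proof: the factorial bound $\open{L}_n \leq \|\mu\|^n/n!$ via the disjoint permuted copies of $\{a_1 < \cdots < a_n\}$, the bound $2\closed{L}_2 \leq \|\mu\|^2 + \sum_\alpha m_\alpha^2 \leq \|\mu\|(\|\mu\| + m^*)$ via symmetry and the diagonal mass, submultiplicativity of the $\closed{L}_n$, and the root test. The only differences are cosmetic (normalizing to a probability measure, and phrasing the two-point bound as inclusion--exclusion on $\{a_1 \leq a_2\} \cup \{a_2 \leq a_1\}$ rather than as disjointness of $\{a_1 \leq a_2\}$ and $\{a_1 > a_2\}$).
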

\begin{proof}
First, we consider the strict case.
Let $ n \geq 2 $ and let $ \Sigma_n $ be the set of permutations of $ \{1, \ldots, n\} $.
For each $ \sigma \in \Sigma_n $, we define
\[ A_\sigma := \{a_{\sigma(1)} < a_{\sigma(2)} < \ldots < a_{\sigma(n)}\}_{T^n}. \]
Now, if $ \sigma, \sigma' \in \Sigma_n $ with $ \sigma \neq \sigma' $ then $ A_\sigma \cap A_{\sigma'} = \emptyset $,
because if $ a_{\sigma(1)}, \ldots, a_{\sigma(n)} $ is an ascending sequence then $ a_{\sigma'(1)}, \ldots, a_{\sigma'(n)} $ will not be.
Therefore, using symmetry
\[
n! \cdot \open{L}_n = n! \cdot \mu^n(\{a_1 < \ldots < a_n\}_{T^n})
= \sum_{\sigma \in \Sigma_n} \mu^n(A_\sigma)
= \mu^n\left(\bigcup_{\sigma \in \Sigma_n} A_\sigma \right) \leq \mu^n(T^n) = \|\mu\|^n
\]
which implies $ \open{L}_n \leq \frac{\|\mu\|^n}{n!} $.
Thus, for any $ z \in \mathbb{C} $,
\[
\open{P}_\mu(|z|) = \sum_{n=0}^\infty \open{L}_n(\mu) |z|^n
\leq \sum_{n=0}^\infty \frac{\|\mu\|^n \cdot |z|^n}{n!} = e^{\|\mu\| \cdot |z|} < \infty
\]
so $ \open{P}_\mu(Z) $ has an infinite radius of convergence.
\bigskip

Second, we consider the non-strict case.
Let $ n \geq 4 $ be an integer then
\[ \{a_1 \leq a_2 \leq a_3 \leq \ldots \leq a_n\}_{T^n} \subseteq \{a_1 \leq a_2\}_{T^2} \times \{a_3 \leq \ldots \leq a_n\}_{T^{n-2}} \]
\[
\closed{L}_n = \mu^n(\{a_1 \leq \ldots \leq a_n\}_{T^n})
\leq \mu^2(\{a_1 \leq a_2\}_{T^2}) \cdot \mu^{n-2}(\{a_3 \leq \ldots \leq a_n\}_{T^{n-2}}) = \closed{L}_2 \cdot \closed{L}_{n-2}.
\]
Similarly for $ n = 3 $, we get
\[ \{a_1 \leq a_2 \leq a_3\}_{T^3} \subseteq \{a_1 \leq a_2\}_{T^2} \times T \]
\[ \closed{L}_3 = \mu^3(\{a_1 \leq a_2 \leq a_3\}_{T^3}) \leq \mu^2(\{a_1 \leq a_2\}_{T^2}) \cdot \mu(T) = \closed{L}_2 \cdot \closed{L}_1. \]
Applying this inductively, we get that for all $ n \geq 1 $,
\begin{equation} \label{eq:coef-bounds}
\closed{L}_{2n} \leq \closed{L}_2^n
\quad \text{ and } \quad
\closed{L}_{2n+1} \leq \closed{L}_2^n \cdot \closed{L}_1.
\end{equation}
Let $ x \in S $ then by definition $ \mu(\{x\}) \leq C := \sup_{\alpha \in \mathcal{A}} \mu(\{\alpha\}) $ and so
\[ \mu^2(\{a_1 = a_2\}_{T^2}) = \int \mu(\{a_1 = x\}_T) \dr\mu(x) = \int \mu(\{x\}) \dr\mu(x) \leq \int C \dr\mu(x) = C \|\mu\|. \]
Since $ a_1 \leq a_2 $ and $ a_1 > a_2 $  are disjoint possibilities, we know
\[ \mu^2(\{a_1 \leq a_2\}_{T^2}) + \mu^2(\{a_1 > a_2\}_{T^2}) \leq \|\mu\|^2 \]
and by symmetry $ \mu^2(\{a_1 > a_2\}_{T^2}) = \mu^2(\{a_1 < a_2\}_{T^2}) $ so
\[ 2 \closed{L}_2 = \mu^2(\{a_1 \leq a_2\}_{T^2}) + \mu^2(\{a_1 = a_2\}) + \mu^2(\{a_1 < a_2\}_{T^2}) \leq \|\mu\|^2 + C \|\mu\|. \]
Thus, we have the bound $ \closed{L}_2 \leq \|\mu\| \cdot (\|\mu\| + C) / 2 $.
Let $ z \in \mathbb{C} $ with $ |z| < 1 / \sqrt{\closed{L}_2} $.
Then, using \cref{eq:coef-bounds} and the fact that $ \closed{L}_1 = \|\mu\| > 0 $, we know
\[
\lim_{k \to \infty} \sqrt[2k]{\abs{\closed{L}_{2k} z^{2k}}} \leq \lim_{k \to \infty} \sqrt[2k]{\abs{\closed{L}_2^k z^{2k}}} = |z| \sqrt{\closed{L}_2} \quad \text{ and } \quad
\lim_{k \to \infty} \sqrt[2k+1]{\abs{\closed{L}_{2k+1} z^{2k+1}}} \leq \lim_{k \to \infty} \sqrt[2k]{\abs{\closed{L}_1 \cdot \closed{L}_2^k z^{2k}}} = |z| \sqrt{\closed{L}_2}.
\]
Because this holds for both evens and odds, we know $ \sqrt[k]{\closed{L}_k |z|^k} \to |z| \sqrt{\closed{L}_2} < 1 $
which by Cauchy's root test means that $ P_\mu(z) $ converges absolutely.
Since this is true for all $ |z| < 1 / \sqrt{\closed{L}_2} $, we conclude that the radius of convergence
must be at least $ 1 / \sqrt{\closed{L}_2} > \sqrt{2 / (\|\mu\| \cdot (\|\mu\| + C))} $.
\end{proof}
\bigskip

For the remainder of this subsection, we will assume $ \mu $ is a probability measure.
Though, $ T $ may still be a partial order.

\begin{proof}[Proof of \cref{thm:length-pgf}]
First, we will consider when $ i = 0 $ and prove \cref{eq:init-length-pgf}.
The first run must contain at least $ X_0 $ so $ N_0 \geq 1 $.
Thus, $ \prob{N_0 = 0} = 0 = \|\mu\| - 1 = L_1 - L_0 $.
Similarly, for $ n = 1 $, we know
\begin{equation} \label{eq:start-prob}
\prob{\open{N}_1 = 1} = \prob{X_0 \nless X_1} = 1 - \prob{X_0 < X_1} = \open{L}_1 - \open{L}_2.
\end{equation}
Now, we observe that for $ n \geq 2 $,
\[ \begin{split}
\open{L}_n &= \prob{X_0 < \ldots < X_{n-1}} \\
&= \prob{X_0 < \ldots < X_{n-1} \text{ and } X_{n-1} < X_n} + \prob{X_0 < \ldots < X_{n-1} \text{ and } X_{n-1} \nless X_n} \\
&= \open{L}_{n+1} + \prob{\open{N}_0 = n}
\end{split} \]
meaning that $ \open{L}_n - \open{L}_{n+1} = \prob{\open{N}_0 = n} $.
Applying the same arguments, we get the equalities in the non-strict case as well.
Now, we can manipulate the run functions to obtain
\[ \begin{split}
1 + (Z - 1) P_\mu(Z) &= 1 + \sum_{n=0}^\infty L_n Z^{n+1} - \sum_{n=0}^\infty L_n Z^n
= 1 + \sum_{n=0}^\infty L_n Z^{n+1} - 1 - \sum_{n=0}^\infty L_{n+1} Z^{n+1} \\
&= \sum_{n=0}^\infty (L_n - L_{n+1}) Z^{n+1} = \sum_{n=0}^\infty \prob{N_0 = n} Z^{n+1} = Z \cdot G_{N_0}(Z).
\end{split} \]
This completes the proof of \cref{eq:init-length-pgf}.
\bigskip

Second, we will consider when $ i \geq 1 $ and prove \cref{eq:length-pgf}.
Since $ i \geq 1 $, we know $ \open{S}_i = \{X_{i - 1} \nless X_i\} $.
Thus,
\[ \prob{\open{S}_i} = \prob{X_{i - 1} \nless X_i} = 1 - \prob{X_{i - 1} < X_i} = 1 - \open{L}_2. \]
Now, for any $ n \geq 1 $, using symmetry and the fact that $ \open{L}_n - \open{L}_{n+1} = \prob{\open{N}_0 = 0} $, we get
\[ \begin{split}
\open{L}_n - \open{L}_{n+1} &= \prob{X_0 < \ldots < X_{n-1} \nless X_n}
= \prob{X_i < \ldots < X_{i+n-1} \nless X_{i+n}} \\
&= \prob{X_{i-1} < X_i \text{ and } X_i < \ldots < X_{i+n-1} \nless X_{i+n}} + \prob{X_{i-1} \nless X_i \text{ and } X_i < \ldots < X_{i+n-1} \nless X_{i+n}} \\
&= \open{L}_{n+1} - \open{L}_{n+2} + \prob{\open{N}_i = n \text{ and } \open{S}_i}.
\end{split} \]
Rearranging, this means that
\begin{equation} \label{eq:length-prob}
\open{L}_n - 2\open{L}_{n+1} + \open{L}_{n+2}
= \probWhen{\open{N}_i = n}{\open{S}_i} \cdot \prob{\open{S}_i}
= \left(1 - \open{L}_2\right) \probWhen{\open{N}_i = n}{\open{S}_i}
\end{equation}
and we can apply the same argument for $ \closed{N}_i $.
For $ n = 0 $, we know the length of a run is at least one so $ \prob{N_i = 0} = 0 $.
Now, recalling that $ L_1(\mu) = \|\mu\| = 1 $,
\[ \begin{split}
(Z - 1)^2 P_\mu(Z) &= \sum_{n=0}^\infty L_n Z^n - 2 \sum_{n=0}^\infty L_n Z^{n+1} + \sum_{n=0}^\infty L_n Z^{n+2} \\
&= L_0 + L_1 Z + \sum_{n=0}^\infty L_{n+2} Z^{n+2} - 2L_0 Z - \sum_{n=0}^\infty 2 L_{n+1} Z^{n+2} + \sum_{n=0}^\infty L_n Z^{n+2} \\
&= 1 - Z + \sum_{n=0}^\infty (L_{n+2} - 2 L_{n+1} + L_n) Z^{n+2} = 1 - Z + (L_2 - 2 L_1 + L_0) Z^2 + \sum_{n=1}^\infty (L_{n+2} - 2 L_{n+1} + L_n) Z^{n+2} \\
&= 1 - Z + (L_2 - 1) Z^2 + Z^2 (1 - L_2) \sum_{n=0}^\infty \probWhen{N_i = n}{S_i} Z^n \\
&= 1 - Z + (L_2 - 1) Z^2 + Z^2 (1 - L_2) \cdot G_{N_i}(Z).
\end{split} \]
Recalling \cref{eq:init-length-pgf}, we can rewrite this as
\[
G_{N_i}(Z) = \frac{Z - 1 + (1 - L_2) Z^2 + (Z - 1)^2 P_\mu(Z)}{Z^2 (1 - L_2)}
= 1 + \frac{(Z - 1)\left( \Big. 1 + (Z - 1) P_\mu(Z) \right)}{Z^2 (1 - L_2)}
= 1 + \frac{Z - 1}{Z (1 - L_2)} G_{N_0}(Z).
\]
From \cref{lem:func-conv-rad}, we know the radius of convergence of $ \open{P}_\mu(Z) $ is infinite
and so $ G_{\open{N}_i}(Z) $ also has an infinite radius of convergence.
Similarly, because the radius of convergence of $ \closed{P}_\mu(Z) $ is greater than or equal to $ \frac{1}{\|\mu\|} $,
we know the same holds for $ G_{\closed{N}_i}(Z) $.
\end{proof}

Now, we will derive the mean and variance from \cref{eq:init-length-pgf} and \cref{eq:length-pgf}.
\begin{proof}[Proof of \cref{coro:length-stats}]
First from \cref{lem:func-conv-rad}, we know that $ P_\mu(Z) $ will have a radius of convergence greater than $ \frac{1}{\|\mu\|} = 1 $
in particular $ P_\mu(1) = \sum_{n=0}^\infty L_n $ exists and we must have $ \lim_{n \to \infty} L_n = 0 $.
Thus, as $ k \to \infty $,
\[ \begin{split}
\prob{\closed{N}_i = \infty} &= \prob{X_i \leq X_{i+1} \leq \ldots} \leq \prob{X_i \leq \ldots \leq X_{i+k}} = \closed{L}_{k+1} \to 0
\quad \text{ and} \\
\prob{\open{N}_i = \infty} &= \prob{X_i < X_{i+1} < \ldots} \leq \prob{X_i < \ldots < X_{i+k}} = \open{L}_{k+1} \to 0
\end{split} \]
implying $ \prob{\closed{N}_i = \infty} = \prob{\open{N}_i = \infty} = 0 $ so $ N_i $ is almost surely finite.
\medskip

Now, to find the expected value, we can take the derivative of the PGF at one.
Using \cref{eq:init-length-pgf} and \cref{eq:length-pgf}, we know that the PGFs for $ \open{N}_i $ and $ \closed{N}_i $
can be expressed in terms of the run functions.
Because the radius of convergence of $ P_\mu(Z) $ is greater than one,
we can take infinitely many derivatives of $ P_\mu(z) $ at $ z = 1 $.
Thus,
\[ \begin{split}
\expect{N_0} = \left. \left(z \frac{d}{dz}\right) G_{N_0}(z) \Bigg. \right|_{z=1}
&= \left. \left(z \frac{d}{dz}\right) \frac{1 + (z - 1) P_\mu(z)}{z} \Bigg. \right|_{z=1} \\
&= \left. z \frac{z (P_\mu(z) + (z - 1) P'_\mu(z)) - (1 + (z - 1) P_\mu(z))}{z^2} \Bigg. \right|_{z=1} \\
&= \left. \frac{z (z - 1) P'_\mu(z) + P_\mu(z) - 1}{z} \Bigg. \right|_{z=1}
= \frac{1 \cdot (1 - 1) P'_\mu(1) + P_\mu(1) - 1}{1} = P_\mu(1) - 1
\end{split} \]
proving \cref{eq:init-length-mean}.
Similarly for the variance, we will use the PGF to find
\[ \begin{split}
\expect{N_0^2} = \left. \left(z \frac{d}{dz}\right)^2 G_{N_0}(z) \Bigg. \right|_{z=1}
&= \left. \left(z \frac{d}{dz}\right) \frac{z (z - 1) P'_\mu(z) + P_\mu(z) - 1}{z} \Bigg. \right|_{z=1} \\
&= \left. z \frac{z \left((2z - 1) P'_\mu(z) + z (z - 1) P''_\mu(z) + P'_\mu(z) \big. \right) - \left(z (z - 1) P'_\mu(z) + P_\mu(z) - 1 \big. \right)}{z^2} \Bigg. \right|_{z=1} \\
&= \left. \frac{z^2 (z - 1) P''_\mu(z) + (z + z^2) P'_\mu(z) - P_\mu(z) + 1}{z} \Bigg. \right|_{z=1} \\
&= \frac{1^2 \cdot (1 - 1) P''_\mu(1) + (1 + 1^2) P'_\mu(1) - P_\mu(1) + 1}{1} = 2 P'_\mu(1) - P_\mu(1) + 1.
\end{split} \]
Then, the variance will be
\[
\var{N_0} = \expect{N_0^2} - \expect{N_0}^2 = (2 P'_\mu(1) - P_\mu(1) + 1) - (P_\mu(1) - 1)^2
= P_\mu(1) - P_\mu(1)^2 + 2 P'_\mu(1)
\]
proving \cref{eq:init-length-var}.
\bigskip
Now, we consider $ i \geq 1 $.
Again because the radius of convergence of $ P_\mu(Z) $ is one,
we know $ P_\mu $ is infinitely differentiable at zero and $ L_2 = P_\mu''(0) / 2 $.
Using \cref{eq:length-pgf} and noting that $ G_{N_0}(1) = 1 $, we can take the first derivative to get
\[ \begin{split}
\expectWhen{N_i}{S_i} = \left. \left(z \frac{d}{dz}\right) G_{N_i}(z) \Bigg. \right|_{z=1}
&= \left. \left(z \frac{d}{dz}\right) \left[1 + \frac{z - 1}{z (1 - L_2)} G_{N_0}(z) \right] \Bigg. \right|_{z=1} \\
&= \left. z \frac{z - 1}{z (1 - L_2)} G'_{N_0}(z) + z \frac{1}{z^2 (1 - L_2)} G_{N_0}(z) \Bigg. \right|_{z=1} \\
&= \frac{1 - 1}{1 - L_2} G'_{N_0}(z) + \frac{1}{1 - L_2} G_{N_0}(1) = \frac{1}{1 - L_2} = \frac{2}{2 - P_\mu''(0)}
\end{split} \]
proving \cref{eq:length-mean}.
From \cref{eq:init-length-mean}, we get $ 1 \cdot G'_{N_0}(1) = \expect{N_0} = P_\mu(1) - 1 $ so
\[ \begin{split}
\expectWhen{N_i^2}{S_i} = \left. \left(z \frac{d}{dz}\right)^2 G_{N_i}(z) \Bigg. \right|_{z=1}
&= \frac{1}{1 - L_2} \left( \left. \left(z \frac{d}{dz}\right) \left[ (z - 1) G'_{N_0}(z) + \frac{1}{z} G_{N_0}(z) \right] \Bigg. \right|_{z=1} \right) \\
&= \frac{1}{1 - L_2} \left( \left. z (z - 1) G''_{N_0}(z) + z G'_{N_0}(z) + \frac{z}{z} G'_{N_0}(z) - \frac{z}{z^2} G_{N_0}(z) \Bigg. \right|_{z=1} \right) \\
&= \frac{1}{1 - L_2} \left( \Big. (1 - 1) G''_{N_0}(1) + G'_{N_0}(1) + G'_{N_0}(1) - G_{N_0}(1) \right) \\
&= \frac{2 G'_{N_0}(1) - 1}{1 - L_2} = \frac{2 (P_\mu(1) - 1) - 1}{1 - L_2} = \frac{2 P_\mu(1) - 3}{1 - L_2}.
\end{split} \]
Thus, the variance for $ N_i $ will be
\[ \begin{split}
\varWhen{N_i}{S_i} = \expectWhen{N_i^2}{S_i} - \expectWhen{N_i}{S_i}^2 &= \frac{2 P_\mu(1) - 3}{1 - L_2} - \frac{1}{(1 - L_2)^2} \\
&= \frac{1}{1 - L_2} \left(-3 + 2 P_\mu(1) - \frac{1}{1 - L_2} \right) \\
&= \frac{2}{2 - P_\mu''(0)} \left(-3 + 2 P_\mu(1) - \frac{2}{2 - P_\mu''(0)} \right)
\end{split} \]
proving \cref{eq:length-var}.
\end{proof}

\section{Strong Law of Large Numbers}

\begin{proof}[Proof of \cref{thm:slln}]
To prove this, we will use a generalization of the law of large numbers for weakly correlated random variables from Lyons \cite{Lyo}.
We define $ W_b $ to be the number of runs in the sequence $ \{X_i\}_{i=0}^\infty $ starting in $ [0, b) $
and $ U_{b,n} $ to be the number of runs starting in $ [0, b) $ of length $ n $.
First, notice that no run can have length zero so for any $ b $, $ U_{b,0} = 0 $ always.
Thus,
\[ \lim_{b \to \infty} \frac{U_{b,0}}{W_b} = 0 = \probWhen{N_i = 0}{S_i}. \]
Now, for each $ n \geq 1 $ and $ i \geq 0 $, we define
\[
R_i := \begin{cases} 1 & \text{if } S_i \\ 0 & \text{otherwise} \end{cases}
\qquad
Q_{i,n} := \begin{cases} 1 & \text{if } S_i \text{ and } N_i = n \\ 0 & \text{otherwise} \end{cases}.
\]
Let $ n \geq 0 $ and $ b \geq 0 $ then
\[ W_b = \sum_{i=0}^{b-1} R_i \quad \text{ and } \quad U_{b,n} = \sum_{i=0}^{b-1} Q_{i,n}. \]
Now, suppose $ j \geq i \geq 0 $ are indices with $ |i - j| > 1 $.
If $ i = 0 $ then $ R_0 = 1 $ is always true and $ \prob{R_0 = 1 \text{ and } R_j = 1} = \prob{R_j = 1} = \prob{R_0 = 1} \cdot \prob{R_j = 1} $.
Otherwise, $ i \neq 0 $ with $ j - 1 > i $ so by independence of the sequence $ \{X_i\} $
\[
\prob{\open{R}_i = 1 \text{ and } \open{R}_j = 1} = \prob{X_{i-1} \nless X_i \text{ and } X_{j-1} \nless X_j}
= \prob{X_{i-1} \nless X_i} \cdot \prob{X_{j-1} \nless X_j} = \prob{\open{R}_i = 1} \cdot \prob{\open{R}_j = 1}
\]
and similarly for $ \closed{R}_i $ and $ \closed{R}_j $.
Thus, $ R_i $ and $ R_j $ are independent if $ |i - j| > 1 $.
Similarly, suppose $ j \geq i \geq 0 $ are indices with $ |i - j| > n + 1 $.
Then, $ j - 1 > i + n $ so by independence,
\[ \begin{split}
\prob{\open{Q}_{0,n} = 1 \text{ and } \open{Q}_{j,n} = 1}
&= \prob{X_0 < \ldots < X_{n-1} \nless X_n \text{ and } X_{j-1} \nless X_j < \ldots < X_{j+n-1} \nless X_{j+n}} \\
&= \prob{X_0 < \ldots < X_{n-1} \nless X_n} \cdot \prob{X_{j-1} \nless X_j < \ldots < X_{j+n-1} \nless X_{j+n}} \\
&= \prob{\open{Q}_{0,n} = 1} \cdot \prob{\open{Q}_{j,n} = 1} \quad \text{ and} \\
\prob{\open{Q}_{i,n} = 1 \text{ and } \open{Q}_{j,n} = 1}
&= \prob{X_{i-1} \nless X_i < \ldots < X_{i+n-1} \nless X_{i+n} \text{ and } X_{j-1} \nless X_j < \ldots < X_{j+n-1} \nless X_{j+n}} \\
&= \prob{X_{i-1} \nless X_i < \ldots < X_{i+n-1} \nless X_{i+n}} \cdot \prob{X_{j-1} \nless X_j < \ldots < X_{j+n-1} \nless X_{j+n}} \\
&= \prob{\open{Q}_{i,n} = 1} \cdot \prob{\open{Q}_{j,n} = 1} \quad \text{ when } i \neq 0
\end{split} \]
and the same arguments apply for the non-strict case.
Therefore, $ Q_{i,n} $ and $ Q_{j,n} $ are independent if $ |i - j| > n + 1 $.
\bigskip

We will be using corollary 4 from Lyons \cite{Lyo}
which states that for a sequence of random variables $ \{Y_i\}_{i=1}^\infty $ with $ |Y_i| \leq 1 $ a.s.,
\[ \lim_{n \to \infty} \frac{1}{n} \sum_{i=1}^n Y_i = 0 \quad \text{ a.s.} \]
if there exists some non-negative function $ \Phi(k) $ on the integers such that $ \expect{Y_i Y_j} \leq \Phi(|i - j|) $ and $ \sum_{k=1}^\infty \Phi(k) / k < \infty $.
\medskip

For each $ i \geq 0 $ and $ n \geq 1 $, we define $ A_i := R_i - \expect{R_i} $ and $ B_{i,n} := Q_{i,n} - \expect{Q_{i,n}} $
so that $ \expect{A_i} = \expect{B_{i,n}} = 0 $.
Since $ R_i, Q_{i,n} \in [0, 1] $ surely, we know $ |A_i|, |B_{i,n}| \leq 1 $.
Observe that for any indices $ i $ and $ j  $,
\[
\expect{A_i A_j} \leq \expect{|A_i| \cdot |A_j|} \leq 1 \quad \text{ when } |i - j| \leq 1
\quad \text{ and } \quad
\expect{B_{i,n} B_{j,n}} \leq \expect{|B_{i,n}| \cdot |B_{j,n}|} \leq 1 \quad \text{ when } |i - j| \leq n + 1
\]
\[
\expect{A_i A_j} = \expect{A_i} \expect{A_j} = 0 \quad \text{ when } |i - j| > 1
\quad \text{ and } \quad
\expect{B_{i,n} B_{j,n}} = \expect{B_{i,n}} \expect{B_{j,n}} = 0 \quad \text{ when } |i - j| > n + 1.
\]
We then define
\[
\Phi_A(k) := \begin{cases} 1 & k \leq 1 \\ 0 & k > 1 \end{cases}
\quad \text{ and } \quad
\Phi_{B,n}(k) := \begin{cases} 1 & k \leq n + 1 \\ 0 & k > n + 1 \end{cases}
\]
which fulfill the requirements of Lyons' corollary 4.
Therefore, the strong law of large numbers holds for $ A_i $ and $ B_{i,n} $ so almost surely
\[
\lim_{b \to \infty} \frac{W_b - \expect{W_b}}{b} = \lim_{b \to \infty} \frac{1}{b} \sum_{i=0}^{b-1} A_i = 0 \quad \text{ and } \quad
\lim_{b \to \infty} \frac{U_{b,n} - \expect{U_{b,n}}}{b} = \lim_{b \to \infty} \frac{1}{b} \sum_{i=0}^{b-1} B_{i,n} = 0.
\]
From \cref{eq:start-prob} and \cref{eq:length-prob},
we know $ \prob{R_i = 1} = 1 - L_2 $ and $ \prob{R_i = 1 \text{ and } N_i = n} = L_{n+2} - 2 L_{n+1} + L_n $ for all $ i \geq 1 $ so
\[ \begin{split}
\expect{\frac{W_b}{b}} = \frac{1}{b} \sum_{i=0}^{b-1} \expect{R_i}
&= \frac{\prob{R_0 = 1}}{b} + \frac{1}{b} \sum_{i=1}^{b-1} \prob{R_i = 1} = \frac{1}{b} + \frac{b - 1}{b} (1 - L_2)
\quad \text{ and} \\
\expect{\frac{U_{b,n}}{b}} = \frac{1}{b} \sum_{i=0}^{b-1} \expect{Q_{i,n}}
&= \frac{\prob{N_0 = n}}{b} + \frac{1}{b} \sum_{i=1}^{b-1} \prob{R_i = 1 \text{ and } N_i = n} \\
&= \frac{\prob{N_0 = n}}{b} + \frac{b - 1}{b} (L_{n+2} - 2L_{n+1} + L_n).
\end{split} \]
Using these expected values in the limits, we know almost surely that
\[ \begin{split}
\lim_{b \to \infty} \frac{W_b}{b} &= \lim_{b \to \infty} \left( \frac{1}{b} + \frac{b-1}{b} (1 - L_2) \right) = 1 - L_2
\qquad \text{ and} \\
\lim_{b \to \infty} \frac{U_{b,n}}{b} &= \lim_{b \to \infty} \left( \frac{\prob{N_0 = n}}{b} + \frac{b-1}{b} (L_{n+2} - 2L_{n+1} + L_n) \right) = L_{n+2} - 2L_{n+1} + L_n
\end{split} \]
which implies that their quotient is almost surely
\[
\lim_{b \to \infty} \frac{U_{b,n}}{W_b} = \lim_{b \to \infty} \frac{U_{b,n} / b}{W_b / b}
= \frac{L_{n+2} - 2L_{n+1} + L_n}{1 - L_2} = \probWhen{N_i = n}{S_i}
\quad \text{ for any } i \geq 1.
\]
\end{proof}

\section{Run Functions of Limits of Measures}
In this section, we will prove that under certain circumstances
the convergence of a sequence of measures
implies the uniform convergence of their run functions
on bounded intervals.

\begin{lem} \label{lem:func-diff-bound}
Suppose $ \mu_1 $ and $ \mu_2 $ are finite positive measures on $ T $.
For all $ z \in \mathbb{C} $, if $ P_{\mu_1}(|z|) $ and $ P_{\mu_2}(|z|) $ exist then
\[ \abs{P_{\mu_1}(z) - P_{\mu_2}(z)} \leq \|\mu_1 - \mu_2\| \cdot |z| \cdot P_{\mu_1}(|z|) \cdot P_{\mu_2}(|z|). \]
\end{lem}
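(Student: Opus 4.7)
The plan is to bound the difference $|P_{\mu_1}(z) - P_{\mu_2}(z)|$ coefficient by coefficient, using a standard telescoping decomposition of $\mu_1^n - \mu_2^n$ into a sum of signed product measures, and then recognize what remains as the Cauchy product of $P_{\mu_1}(|z|)$ and $P_{\mu_2}(|z|)$.

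Concretely, write $\nu = \mu_1 - \mu_2$ and note the telescoping identity
\[ \mu_1^n - \mu_2^n = \sum_{k=1}^{n} \mu_1^{k-1} \times \nu \times \mu_2^{n-k}, \]
which can be verified by collapsing consecutive terms in the sequence $\mu_2^n, \mu_1 \times \mu_2^{n-1}, \ldots, \mu_1^n$. Letting $C_n \subseteq T^n$ denote either the strict chain set $\{a_1 < \ldots < a_n\}$ or the non-strict chain set $\{a_1 \leq \ldots \leq a_n\}$ (so $L_n(\mu) = \mu^n(C_n)$, with the cases $n=0,1$ included trivially), I would apply this identity to the indicator of $C_n$ and pass to absolute values using $|\mu \times \nu| = |\mu| \times |\nu|$ for signed measures. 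This yields
\[ \bigl|L_n(\mu_1) - L_n(\mu_2)\bigr| \leq \sum_{k=1}^{n} (\mu_1^{k-1} \times |\nu| \times \mu_2^{n-k})(C_n). \]

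The next key step is the geometric observation that if $(a_1, \ldots, a_n) \in C_n$, then $(a_1, \ldots, a_{k-1}) \in C_{k-1}$ and $(a_{k+1}, \ldots, a_n) \in C_{n-k}$, so $C_n$ is contained in $C_{k-1} \times T \times C_{n-k}$ (chains remain chains after deleting an entry). This gives the Fubini factorization
\[ (\mu_1^{k-1} \times |\nu| \times \mu_2^{n-k})(C_n) \leq L_{k-1}(\mu_1) \cdot \|\nu\| \cdot L_{n-k}(\mu_2), \]
and therefore $|L_n(\mu_1) - L_n(\mu_2)| \leq \|\mu_1 - \mu_2\| \sum_{j+l=n-1} L_j(\mu_1) L_l(\mu_2)$ for all $n \geq 1$ (the $n = 0$ term vanishes since $L_0 = 1$ in both cases).

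Finally, summing against $|z|^n$ and shifting the index by one to pull out a factor of $|z|$:
\[ |P_{\mu_1}(z) - P_{\mu_2}(z)| \leq \|\mu_1 - \mu_2\| \cdot |z| \sum_{m=0}^{\infty} |z|^m \sum_{j+l=m} L_j(\mu_1) L_l(\mu_2) = \|\mu_1 - \mu_2\| \cdot |z| \cdot P_{\mu_1}(|z|) \cdot P_{\mu_2}(|z|), \]
where the last equality is the Cauchy product, justified by absolute convergence of $P_{\mu_1}(|z|)$ and $P_{\mu_2}(|z|)$ under the hypothesis. The most delicate point is handling signed measures carefully — specifically, justifying $|\mu \times \nu| = |\mu| \times |\nu|$ for a product of a positive measure with a signed measure, and making sure that integration of a non-negative indicator against such a product is dominated by integration against the product of total variations. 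Everything else is bookkeeping, including verifying that the bound is tight at $n = 1$ (where $|L_1(\mu_1) - L_1(\mu_2)| = |\|\mu_1\| - \|\mu_2\|| \leq \|\mu_1 - \mu_2\|$ matches $\|\mu_1 - \mu_2\| \cdot L_0(\mu_1) L_0(\mu_2)$).
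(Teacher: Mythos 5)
Your proposal is correct and follows essentially the same route as the paper's proof: the same telescoping decomposition of $\mu_1^n - \mu_2^n$, the same containment of the chain set in a product of shorter chain sets with a free middle coordinate, the same Fubini factorization giving the convolution bound on $|L_n(\mu_1)-L_n(\mu_2)|$, and the same Cauchy-product identification at the end. Your explicit attention to the total-variation identity for products involving the signed measure $\mu_1-\mu_2$ is a point the paper passes over more quickly, but it is the same argument.
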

\begin{proof}
First, we will bound the distance of $ L_n(\mu_1) $ and $ L_n(\mu_2) $.
Let $ n \geq 1 $ and observe that the following sum telescopes
\[ \begin{split}
\sum_{k=0}^{n-1} \mu_1^k \times (\mu_1 - \mu_2) \times \mu_2^{n-k-1}
&= \left(\sum_{k=0}^{n-1} \mu_1^{k+1} \times \mu_2^{n-k-1} \right) - \left(\sum_{k=0}^{n-1} \mu_1^k \times \mu_2^{n-k} \right) \\
&= \left(\sum_{k=1}^n \mu_1^k \times \mu_2^{n-k} \right) - \left(\sum_{k=0}^{n-1} \mu_1^k \times \mu_2^{n-k} \right)
= \mu_1^n - \mu_2^n
\end{split} \]
Thus, we can bound $ \abs{\open{L}_n(\mu_1) - \open{L}_n(\mu_2)} $ as
\begin{equation} \label{eq:coef-diff-bound} \begin{split}
\abs{\open{L}_n(\mu_1) - \open{L}_n(\mu_2)}
&\leq \abs{\mu_1^n - \mu_2^n}(\{a_1 < \ldots < a_n\}_{T^n}) \\
&\leq \sum_{k=0}^{n-1} \abs{\mu_1^k \times (\mu_1 - \mu_2) \times \mu_2^{n-k-1}}(\{a_1 < \ldots < a_n\}_{T^n}) \\
&\leq \sum_{k=0}^{n-1} \abs{\mu_1^k \times (\mu_1 - \mu_2) \times \mu_2^{n-k-1}}(\{a_1 < \ldots < a_k, a_{k+2} < \ldots < a_n\}_{T^n}) \\
&= \sum_{k=0}^{n-1} \abs{\mu_1^k}(\{a_1 < \ldots < a_k\}_{T^k}) \cdot \abs{\mu_1 - \mu_2}(T) \cdot \abs{\mu_2^{n-k-1}}(\{a_{k+2} < \ldots < a_n\}_{T^{n-k-1}}) \\
&= \sum_{k=0}^{n-1} \open{L}_k(\mu_1) \cdot \|\mu_1 - \mu_2\| \cdot \open{L}_{n-k-1}(\mu_1)
\end{split} \end{equation}
and the same argument holds for $ \abs{\closed{L}_n(\mu_1) - \closed{L}_n(\mu_2)} $.
\bigskip

Now, let $ z \in \mathbb{C} $ such that $ P_{\mu_1}(|z|) $ and $ P_{\mu_2}(|z|) $ converge.
Note that $ L_0(\mu_1) = 1 = L_0(\mu_2) $ so they will cancel in the difference.
Then, using \cref{eq:coef-diff-bound}
\[ \begin{split}
\abs{P_{\mu_1}(z) - P_{\mu_2}(z)} &= \abs{\sum_{n=1}^\infty (L_n(\mu_1) - L_n(\mu_2)) z^n} \\
&\leq \sum_{n=1}^\infty \abs{L_n(\mu_1) - L_n(\mu_2)} \cdot |z|^n
= |z| \sum_{n=1}^\infty |z|^{n-1} \cdot \|\mu_1 - \mu_2\| \sum_{j=0}^{n-1} L_j(\mu_1) \cdot L_{n-j-1}(\mu_2) \\
&= \|\mu_1 - \mu_2\| \cdot |z| \sum_{n=0}^\infty \sum_{j=0}^n (L_j(\mu_1) \cdot L_{n-j}(\mu_2)) |z|^n.
\end{split} \]
Using the substitution $ j = n - k $, we can say
\[ \begin{split}
\sum_{n=0}^\infty \sum_{j=0}^n (L_j(\mu_1) \cdot L_{n-j}(\mu_2)) |z|^n
&= \sum_{j=0}^\infty \sum_{k=0}^\infty (L_j(\mu_1) \cdot L_k(\mu_2)) |z|^{j + k} \\
&= \left(\sum_{j=0}^\infty L_j(\mu_1) |z|^j \right)\left(\sum_{k=0}^\infty L_k(\mu_2) |z|^k \right)
= P_{\mu_1}(|z|) \cdot P_{\mu_2}(|z|)
\end{split} \]
proving the desired bound.
\end{proof}

\begin{prop} \label{prop:func-limit}
Suppose that $ \mu, \mu_1, \mu_2, \ldots $ are finite non-negative measures on $ T $
with $ \mu_i \leq \mu_j $ for $ i \leq j $ and $ \mu_k \to \mu $ in the total variation norm.
If $ R $ is the radius of convergence of $ P_\mu(Z) $
then for all $ z \in \mathbb{C} $ with $ |z| < R $, $ P_{\mu_k}(|z|) $ exists for all $ k $.
Additionally, for any $ 0 < b < R $, $ P_{\mu_k}(z) \to P_\mu(z) $ uniformly on $ \{|z| \leq b\} $.
\end{prop}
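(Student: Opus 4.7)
The plan is to bootstrap off the bound already established in \cref{lem:func-diff-bound}. The convergence $\mu_k \to \mu$ in total variation, combined with monotonicity, will give an upper envelope for all the $P_{\mu_k}$, and then the lemma will convert total variation convergence directly into uniform convergence of the run functions.

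First I would observe that since $\mu_k$ is non-decreasing and converges to $\mu$ in total variation, we must have $\mu_k(E) \leq \mu(E)$ for every Borel set $E$: for each fixed $E$ the sequence $\mu_k(E)$ is non-decreasing and has limit $\mu(E)$. Consequently, viewing $\mu_k^n$ and $\mu^n$ as product measures, $\mu_k^n \leq \mu^n$, and the monotonicity of integration yields $L_n(\mu_k) \leq L_n(\mu)$ for every $n \geq 0$ (separately in the strict and non-strict cases). Therefore, for any $z$ with $|z| < R$,
\[
P_{\mu_k}(|z|) = \sum_{n=0}^\infty L_n(\mu_k) |z|^n \leq \sum_{n=0}^\infty L_n(\mu) |z|^n = P_\mu(|z|) < \infty,
\]
which proves the existence claim and shows that $P_{\mu_k}$ has radius of convergence at least $R$.

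Next, to obtain the uniform bound on $\{|z| \leq b\}$ with $0 < b < R$, I would apply \cref{lem:func-diff-bound} with the measures $\mu_k$ and $\mu$. Both $P_{\mu_k}(|z|)$ and $P_\mu(|z|)$ exist for $|z| \leq b$ by the previous step, and both are bounded above by $P_\mu(b)$ (which is a finite constant since $b < R$). The lemma then gives
\[
\bigl| P_{\mu_k}(z) - P_\mu(z) \bigr| \leq \|\mu_k - \mu\| \cdot |z| \cdot P_{\mu_k}(|z|) \cdot P_\mu(|z|) \leq \|\mu_k - \mu\| \cdot b \cdot P_\mu(b)^2
\]
uniformly for $|z| \leq b$. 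Since the right-hand side is independent of $z$ and tends to $0$ as $k \to \infty$ by hypothesis, this establishes the uniform convergence on $\{|z| \leq b\}$.

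There is essentially no hard step here: the difficulty has been absorbed into \cref{lem:func-diff-bound}. The only genuinely substantive observation is the monotonicity argument giving $\mu_k \leq \mu$, which is needed to guarantee a priori that each $P_{\mu_k}$ is defined on a disk as large as the one on which $P_\mu$ is defined, so that the bound from \cref{lem:func-diff-bound} is applicable everywhere on $\{|z| \leq b\}$. Once that is in place, applying the lemma and invoking $\|\mu_k - \mu\| \to 0$ finishes the proof immediately.
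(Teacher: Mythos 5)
Your proof is correct and follows essentially the same route as the paper's: establish $\mu_k \leq \mu$ by monotonicity (hence $L_n(\mu_k) \leq L_n(\mu)$ and $P_{\mu_k}(|z|) \leq P_\mu(|z|) < \infty$ for $|z| < R$), then apply \cref{lem:func-diff-bound} to get the uniform bound $\|\mu_k - \mu\| \cdot b \cdot P_\mu(b)^2 \to 0$. The only cosmetic difference is that the paper treats the strict case via the infinite radius of convergence from \cref{lem:func-conv-rad}, while you handle both cases uniformly through the domination argument, which works equally well.
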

\begin{proof}
First, we will show that $ P_{\mu_k}(|z|) $ exists when $ |z| < R $.
For the sctrict case, by \cref{lem:func-conv-rad}, the radii of convergence of $ \open{P}_\mu(Z) $
and $ \open{P}_{\mu_1}(Z), \open{P}_{\mu_2}(Z), \ldots $ are all infinite
so $ \open{P}_{\mu_k}(|z|) $ always exists.
For any $ n $, by monotonicity, we know
\[ \closed{L}_n(\mu_k) = \mu_k^n(\{a_1 \leq \ldots \leq a_n\}_{T^n}) \leq \mu^n(\{a_1 \leq \ldots \leq a_n\}_{T^n}) = \closed{L}_n(\mu). \]
Thus, if $ z \in \mathbb{C} $ with $ |z| < R $ then $ \closed{P}_{\mu_k}(|z|) \leq \closed{P}_\mu(|z|) < \infty $.
\bigskip

Next, we show uniform convergence on bounded sets.
Let $ 0 < b < R $ then $ P_\mu(b) < \infty $.
Now, for all $ z \in \mathbb{C} $ with $ |z| \leq b $, by monotonicity $ P_{\mu_k}(|z|) \leq P_\mu(|z|) \leq P_\mu(b) $.
Then, by \cref{lem:func-diff-bound}
\[
\abs{P_{\mu_k}(z) - P_\mu(z)} \leq \|\mu_k - \mu\| \cdot |z| \cdot P_{\mu_k}(|z|) \cdot P_\mu(|z|)
\leq \|\mu_k - \mu\| \cdot b \cdot P_\mu^2(b) \to 0.
\]
Since the bound is independent of $ z $, this implies that $ \open{P}_{\mu_k}(z) \to \open{P}_\mu(z) $ uniformly on $ \{|z| < b\} $.
\end{proof}

\section{Combining Run Functions}

In this section, we will show how the run functions
behave under series and parallel composition.
We begin by proving the result for finite $ \Gamma $.
It is sufficient to consider when $ \Gamma = \{1, 2\} $
because $ \otimes $ and $ \oplus $ are associative and we can extend by induction.
\medskip

\begin{lem} \label{lem:fin-series-func}
If $ \mu_1 $ and $ \mu_2 $ are finite measures over partial orders $ T_1 $ and $ T_2 $, respectively,
then $ P_{\mu_1 \otimes \mu_2}(Z) = P_{\mu_1}(Z) \cdot P_{\mu_2}(Z) $ as formal power series.
\end{lem}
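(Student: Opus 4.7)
The plan is to argue at the level of coefficients. Recall that $P_{\mu}(Z) = \sum_{n \geq 0} L_n(\mu) Z^n$, so as formal power series the identity $P_{\mu_1 \otimes \mu_2}(Z) = P_{\mu_1}(Z) \cdot P_{\mu_2}(Z)$ is equivalent to the Cauchy product relation
\[ L_n(\mu_1 \otimes \mu_2) = \sum_{k=0}^n L_k(\mu_1) \cdot L_{n-k}(\mu_2) \quad \text{for every } n \geq 0. \]
So the entire proof reduces to establishing this combinatorial identity for the strict and non-strict run coefficients.

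The key observation is the structure of chains in $T_1 \otimes T_2$: since every element of $T_1$ lies strictly below every element of $T_2$, any tuple $\langle a_1, \ldots, a_n \rangle$ satisfying $a_1 < \ldots < a_n$ (or $a_1 \leq \ldots \leq a_n$) in $T_1 \otimes T_2$ has a unique integer $k \in \{0, 1, \ldots, n\}$ such that $a_1, \ldots, a_k \in T_1$ and $a_{k+1}, \ldots, a_n \in T_2$. Moreover, the $T_1$-prefix must be a chain in $T_1$ and the $T_2$-suffix must be a chain in $T_2$; the boundary inequality $a_k < a_{k+1}$ (or $a_k \leq a_{k+1}$) is automatic from the definition of $\otimes$. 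This gives a partition
\[ \{a_1 < \ldots < a_n\}_{(T_1 \sqcup T_2)^n} = \bigsqcup_{k=0}^n \{a_1 < \ldots < a_k\}_{T_1^k} \times \{a_{k+1} < \ldots < a_n\}_{T_2^{n-k}}, \]
and analogously with $\leq$.

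Next I would invoke the fact that $(\mu_1 \otimes \mu_2)\big|_{T_i} = \mu_i$, so the product measure $(\mu_1 \otimes \mu_2)^n$ restricted to $T_1^k \times T_2^{n-k}$ equals $\mu_1^k \times \mu_2^{n-k}$. Applying this to each piece of the partition yields
\[ L_n(\mu_1 \otimes \mu_2) = \sum_{k=0}^n \mu_1^k(\{a_1 < \ldots < a_k\}_{T_1^k}) \cdot \mu_2^{n-k}(\{a_1 < \ldots < a_{n-k}\}_{T_2^{n-k}}) = \sum_{k=0}^n L_k(\mu_1) \cdot L_{n-k}(\mu_2) \]
for $n \geq 2$, and the cases $n = 0$ and $n = 1$ should be checked directly using $L_0 = 1$ and $L_1(\mu_1 \otimes \mu_2) = \mu_1(T_1) + \mu_2(T_2)$, which match the convolution formula. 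The non-strict case is handled identically by replacing $<$ with $\leq$ throughout.

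The only subtle point is making sure the measurability arguments go through so that Tonelli/Fubini justifies the factorisation on each piece, but this is immediate from the blanket measurability assumption on $\{a_1 < a_2\}$ and the observation that the partition involves only Cartesian products of chain-sets in $T_1$ and $T_2$ separately. There is no analytic obstacle since we are working at the level of formal power series, so convergence of either side is irrelevant.
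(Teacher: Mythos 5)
Your proposal is correct and follows essentially the same route as the paper: both decompose the chain-set of $T_1 \otimes T_2$ into the disjoint union over $k$ of products of a $T_1$-chain-prefix and a $T_2$-chain-suffix, and then read off the convolution identity $L_n(\mu_1 \otimes \mu_2) = \sum_{k=0}^n L_k(\mu_1) L_{n-k}(\mu_2)$ from the product measure. The only cosmetic difference is that the paper packages the decomposition via explicitly named sets $A_{k,n}$, $B_n$ and verifies disjointness by contradiction, whereas you argue uniqueness of the splitting index directly; the mathematical content is identical.
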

\begin{proof}
For all integers $ n \geq 1 $ and $ 0 \leq k \leq n $, we define $ A_n, B_n \subseteq (T_1 \otimes T_2)^n $ as
\[ \begin{split}
A_{k,n} &:= \begin{cases}
\hfil T_2 & \text{ for } n = 1 \text{ and } k = 0 \\
\hfil T_1 & \text{ for } n = 1 \text{ and } k = 1 \\
\{\langle a_1, \ldots, a_n \rangle \in T_1^k \times T_2^{n-k} \;:\; a_1 < \ldots < a_n\} & \text{ otherwise}
\end{cases} \\
B_n &:= \begin{cases}
\hfil T_1 \otimes T_2 & \text{ for } n = 1 \\
\{\langle a_1, \ldots, a_n \rangle \in (T_1 \otimes T_2)^n \;:\; a_1 < \ldots < a_n\} & \text{ otherwise}
\end{cases}.
\end{split} \]
Here, we are taking $ T_1^0 \times T_2^n $ and $ T_1^n \times T_2^0 $ to mean $ T_2^n $ and $ T_1^n $, respectively.
Next, we define $ \tilde{B}_n \subseteq (T_1 \otimes T_2)^n $ as
\[ \tilde{B}_n := \bigcup_{k=0}^n A_{k,n}. \]
We will now prove that $ B_n = \tilde{B}_n $.
When $ n = 1 $, we have $ B_0 = T_1 \otimes T_2 = T_1 \cup T_2 = \tilde{B}_0 $.
When $ n \geq 2 $, let $ a = \langle a_1, \ldots, a_n \rangle \in B_n $ so that $ a_1 < \ldots < a_n $.
We define
\[ k := \max\{i \in \{1, \ldots, n\} \;:\; c_i \in T_1\} \]
or $ k = 0 $ if $ a_1, \ldots, a_n \in T_2 $.
For all $ i \leq k $, $ a_i \leq a_k \in T_1 $ so $ a_i \in T_1 $.
Also for all $ i > k $, by maximality, $ a_i \notin T_1 $ so $ a_i \in T_2 $.
Thus, $ a_1, \ldots, a_k \in T_1 $ and $ a_{k+1}, \ldots, a_n \in T_2 $
implying that $ a \in T_1^k \times T_2^{n-k} $.
Because $ a_1 < \ldots < a_n $, this means $ a \in A_{k,n} \subseteq \tilde{B}_n $ and so $ B_n \subseteq \tilde{B}_n $.
Conversely, let $ a = \langle a_1, \ldots, a_n \rangle \in \tilde{B}_n $ with $ a \in A_{k,n} $.
Then, $ a_1 < \ldots < a_n $ meaning $ a \in B_n $ and so $ \tilde{B}_n \subseteq B_n $.
Hence, $ B_n = \tilde{B}_n $.
\bigskip

Next, we show that the collection $ \{A_{k,n}\}_{k=0}^n $ is disjoint.
Let $ j, k \in \{0, \ldots, n\} $ with $ j < k $.
Assume for sake of contradiction that $ a \in A_{j,n} \cap A_{k,n} $.
Since $ a \in A_{k,n} $ with $ k > 0 $, $ a_k \in T_1 $.
However since $ a \in A_{j,n} $, we know $ a_{j+1}, \ldots, a_k, \ldots, a_n \in T_2 $ which is a contradiction.
We conclude that $ A_{j,n} \cap A_{k,n} = \emptyset $.
Thus, $ \open{L}_0(\mu_1 \otimes \mu_2) = 1 = \open{L}_0(\mu_1) \cdot \open{L}_0(\mu_2) $ and for $ n \geq 1 $,
\[ \begin{split}
\open{L}_n(\mu_1 \otimes \mu_2) &= (\mu_1 \otimes \mu_2)^n(B_n)
= (\mu_1 \otimes \mu_2)^n\left(\bigcup_{k=0}^n A_{k,n} \right)
= \sum_{k=0}^n (\mu_1 \otimes \mu_2)^n(A_{k,n}) \\
&= \sum_{k=0}^n (\mu_1 \otimes \mu_2)^k\left(\{a_1 < \ldots < a_k\}_{T_1^k} \right)
\cdot (\mu_1 \otimes \mu_2)^{n-k}\left(\{a_{k+1} < \ldots < a_n\}_{T_2^{n-k}} \right) \\
&= \sum_{k=0}^n \mu_1^k\left(\{a_1 < \ldots < a_k\}_{T_1^k} \right) \cdot \mu_2^{n-k}\left(\{a_{k+1} < \ldots < a_n\}_{T_2^{n-k}} \right)
= \sum_{k=0}^n \open{L}_k(\mu_1) \cdot \open{L}_{n-k}(\mu_2).
\end{split} \]
We can apply the same argument to $ \closed{L}_n(\mu_1 \otimes \mu_2) $ as well.
Therefore, using the substitution $ n = j + k $, the run functions become
\[ \begin{split}
P_{\mu_1 \otimes \mu_2}(Z) &= \sum_{n=0}^\infty L_n(\mu_1 \otimes \mu_2) Z^n
= \sum_{n=0}^\infty \sum_{j=0}^n (L_j(\mu_1) \cdot L_{n-j}(\mu_2)) Z^n \\
&= \sum_{j=0}^\infty \sum_{k=0}^\infty (L_j(\mu_1) \cdot L_k(\mu_2)) Z^{j + k}
= \left(\sum_{j=0}^\infty L_j(\mu_1) Z^j\right) \left(\sum_{k=0}^\infty L_k(\mu_2) Z^k\right)
= P_{\mu_1}(Z) \cdot P_{\mu_2}(Z).
\end{split} \]
\end{proof}

\begin{lem} \label{lem:fin-para-func}
If $ \mu_1 $ and $ \mu_2 $ are finite measures over partial orders $ T_1 $ and $ T_2 $, respectively,
then $ P_{\mu_1 \oplus \mu_2}(Z) - 1 = (P_{\mu_1}(Z) - 1) + (P_{\mu_2}(Z) - 1) $ as formal power series.
\end{lem}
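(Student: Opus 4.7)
The plan is to reduce this to an entrywise identity on the run coefficients $L_n(\mu_1 \oplus \mu_2)$. The guiding observation is that in $T_1 \oplus T_2$ any element of $T_1$ is incomparable to any element of $T_2$, so any chain $a_1 < \ldots < a_n$ (or $a_1 \leq \ldots \leq a_n$) in $T_1 \oplus T_2$ must be entirely contained in either $T_1$ or $T_2$. Mirroring the indexing used in \cref{lem:fin-series-func}, I would define $A_{k,n} \subseteq (T_1 \oplus T_2)^n$ as the set of chains with the first $k$ coordinates in $T_1$ and the remaining coordinates in $T_2$, and show that for $n \geq 2$ only $A_{0,n}$ and $A_{n,n}$ are nonempty, since any $A_{k,n}$ with $0 < k < n$ would require $a_k < a_{k+1}$ across incomparable components.

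Once this is established, for $n \geq 2$ we get the disjoint decomposition
\[
\{a_1 < \ldots < a_n\}_{(T_1 \oplus T_2)^n} = \{a_1 < \ldots < a_n\}_{T_1^n} \sqcup \{a_1 < \ldots < a_n\}_{T_2^n},
\]
and since $(\mu_1 \oplus \mu_2)^n$ agrees with $\mu_1^n$ on $T_1^n$ and with $\mu_2^n$ on $T_2^n$ (immediate from \cref{defn:series-para-meas} applied coordinatewise), this yields $L_n(\mu_1 \oplus \mu_2) = L_n(\mu_1) + L_n(\mu_2)$. The same argument handles the non-strict case verbatim. For $n = 1$ I would simply invoke $(\mu_1 \oplus \mu_2)(T_1 \oplus T_2) = \mu_1(T_1) + \mu_2(T_2)$, so the identity $L_n(\mu_1 \oplus \mu_2) = L_n(\mu_1) + L_n(\mu_2)$ extends to $n \geq 1$. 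The $n = 0$ constant terms are all equal to $1$, which is precisely why a single $-1$ must be subtracted on each side of the final identity.

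Summing over $n \geq 1$ then gives
\[
P_{\mu_1 \oplus \mu_2}(Z) - 1 = \sum_{n=1}^\infty L_n(\mu_1) Z^n + \sum_{n=1}^\infty L_n(\mu_2) Z^n = \bigl(P_{\mu_1}(Z) - 1\bigr) + \bigl(P_{\mu_2}(Z) - 1\bigr),
\]
as formal power series. The main (and essentially only) obstacle is the incomparability argument ruling out mixed chains; once that geometric observation is made precise, the rest is bookkeeping. Measurability of the relevant sets is automatic from the hypothesis that $\{x < y\}$ is measurable, so no subtle measure-theoretic issues arise.
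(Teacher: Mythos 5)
Your proposal is correct and follows essentially the same route as the paper: both rest on the observation that incomparability across components forces any (strict or non-strict) chain in $T_1 \oplus T_2$ to lie entirely within $T_1$ or entirely within $T_2$, giving the disjoint decomposition of the chain sets and hence $L_n(\mu_1 \oplus \mu_2) = L_n(\mu_1) + L_n(\mu_2)$ for $n \geq 1$, after which the formal power series identity is immediate. The only cosmetic difference is that you recycle the $A_{k,n}$ indexing from the series lemma and observe that only $k = 0$ and $k = n$ survive, whereas the paper defines the three chain sets $A_n$, $B_n$, $C_n$ directly; the substance is identical.
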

\begin{proof}
For all integers $ n \geq 1 $, we define $ A_n, B_n, C_n \subseteq T $ as
\[
A_n := \{a_1 < \ldots < a_n\}_{T_1^n}
\quad \text{ and } \quad
B_n := \{a_1 < \ldots < a_n\}_{T_2^n}
\quad \text{ and } \quad
C_n := \{a_1 < \ldots < a_n\}_{(T_1 \oplus T_2)^n}
\]
Immediately, we have $ A_n, B_n \subseteq C_n $.
Let $ a = \langle a_1, \ldots, a_n \rangle \in C_n $ then
either $ a_1 \in T_1 $ or $ a_1 \in T_2 $.
If $ a_1 \in T_1 $ then since $ a_2, \ldots, a_n $ are comparable to $ a_1 $,
we know $ a_2, \ldots, a_n \in T_1 $ and so $ a \in A_n $.
Otherwise, if $ a_1 \in T_2 $ then $ a \in B_n $.
Hence, we have $ C_n \subseteq A_n \cup B_n $ and so $ C_n = A_n \cup B_n $.
\medskip

Notice that $ A_n \cap B_n \subseteq T_1^n \cap T_2^n = \emptyset $ thus
\[
\open{L}_n(\mu_1 \oplus \mu_2) = (\mu_1 \oplus \mu_2)^n(C_n)
= (\mu_1 \oplus \mu_2)^n(A_n) + (\mu_1 \oplus \mu_2)^n(B_n)
= \mu_1^n(A_n) + \mu_2^n(B_n) = \open{L}_n(\mu_1) + \open{L}_n(\mu_2).
\]
We may apply this same argument to the non-strict case
so $ L_n(\mu_1 \oplus \mu_2) = L_n(\mu_1) + L_n(\mu_2) $ for $ n \geq 1 $.
Therefore, the run functions will be
\[
P_{\mu_1 \oplus \mu_2}(Z) - 1 = \sum_{n=1}^\infty L_n(\mu_1 \oplus \mu_2) Z^n
= \sum_{n=1}^\infty L_n(\mu_1) Z^n + \sum_{n=1}^\infty L_n(\mu_2) Z^n
= (P_{\mu_1}(Z) - 1) + (P_{\mu_2}(Z) - 1).
\]
\end{proof}

\begin{prop} \label{prop:comb-funcs}
Suppose $ M = \{\mu_\gamma\}_{\gamma \in \Gamma} $ is a collection of finite non-negative measures on
the collection of partial orders $ \mathcal{T} = \{T_\gamma\}_{\gamma \in \Gamma} $.
We assume that the sum of the norms of $ \{\mu_\gamma\} $ is finite.
Let $ R_\otimes $ and $ R_\oplus $ be the radii of convergence of $ P_{\bigotimes M} $ and $ P_{\bigoplus M} $, respectively.
If $ \Gamma $ is countable then for all $ z \in \mathbb{C} $,
\[
P_{\bigotimes M}(z) = \prod_{\gamma \in \Gamma} P_{\mu_\gamma}(z)
\; \text{ when } |z| < R_\otimes
\quad \text{ and } \quad
P_{\bigoplus M}(z) - 1 = \sum_{\gamma \in \Gamma} (P_{\mu_\gamma}(z) - 1)
\; \text{ when } |z| < R_\oplus.
\]
\end{prop}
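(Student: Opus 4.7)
The plan is to bootstrap from the finite case (Lemmas \ref{lem:fin-series-func} and \ref{lem:fin-para-func}) to the countable case using Proposition \ref{prop:func-limit}. Since $\Gamma$ is countable, fix any enumeration $\Gamma = \{\gamma_1, \gamma_2, \ldots\}$ (using just the first finitely many indices of $\Gamma$ if $\Gamma$ is finite, in which case the result is already immediate from induction on Lemmas \ref{lem:fin-series-func} and \ref{lem:fin-para-func}). For each $k$, let $\Gamma_k = \{\gamma_1, \ldots, \gamma_k\}$ with the order induced from $\Gamma$, and let $M_k = \{\mu_{\gamma_i}\}_{i \le k}$, $\mathcal{T}_k = \{T_{\gamma_i}\}_{i \le k}$.

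First I would observe that $\bigotimes \mathcal{T}_k$ embeds into $\bigotimes \mathcal{T}$ as an order-preserving subset (and similarly $\bigoplus \mathcal{T}_k$ into $\bigoplus \mathcal{T}$), since the comparison relation between elements of distinct $T_{\gamma_i}, T_{\gamma_j}$ is determined by the order on $\Gamma$ which $\Gamma_k$ inherits. Through these embeddings, $\bigotimes M_k$ and $\bigoplus M_k$ may be viewed as finite non-negative measures on $\bigotimes \mathcal{T}$ and $\bigoplus \mathcal{T}$, respectively, which are supported on the natural sub-order. Since they differ from $\bigotimes M$ and $\bigoplus M$ only by omitting the mass contributed by $\{\mu_{\gamma_i}\}_{i > k}$, monotonicity in $k$ is immediate, and
\[
\left\| \bigotimes M - \bigotimes M_k \right\| = \sum_{i > k} \|\mu_{\gamma_i}\| \longrightarrow 0,
\]
and likewise for $\bigoplus$; this uses the assumed summability $\sum_{\gamma} \|\mu_\gamma\| < \infty$.

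Second, because the run functions depend only on the measure together with the order relation on its support, and the embedding is order-preserving, $P_{\bigotimes M_k}$ (as defined for $\bigotimes M_k$ viewed internally on $\bigotimes \mathcal{T}_k$) equals $P_{\bigotimes M_k}$ viewed as a measure on $\bigotimes \mathcal{T}$; the analogous statement holds in the parallel case. Proposition \ref{prop:func-limit} then yields uniform convergence on compact subsets of $\{|z| < R_\otimes\}$ of $P_{\bigotimes M_k}(z) \to P_{\bigotimes M}(z)$, and likewise $P_{\bigoplus M_k}(z) \to P_{\bigoplus M}(z)$ for $|z| < R_\oplus$. By induction on Lemmas \ref{lem:fin-series-func} and \ref{lem:fin-para-func},
\[
P_{\bigotimes M_k}(z) = \prod_{i=1}^{k} P_{\mu_{\gamma_i}}(z), \qquad P_{\bigoplus M_k}(z) - 1 = \sum_{i=1}^{k} \bigl(P_{\mu_{\gamma_i}}(z) - 1\bigr),
\]
so passing to the limit in $k$ gives the claimed infinite product and sum formulas along the chosen enumeration. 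To conclude the formula as stated (without reference to an enumeration), I would note that both sides are unconditionally convergent: by Lemma \ref{lem:func-conv-rad} and the bound $L_n(\mu_\gamma) \le \|\mu_\gamma\|^n/n!$ (strict) or the root-test estimate (non-strict), one has $|P_{\mu_\gamma}(z) - 1| = O(\|\mu_\gamma\|)$ uniformly for $z$ in a compact set, so $\sum_\gamma |P_{\mu_\gamma}(z) - 1| < \infty$, giving both absolute convergence of the sum and absolute convergence of the product, independent of the ordering of $\Gamma$.

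The main obstacle is the bookkeeping in step one: verifying that the "internal" run function of $\bigotimes M_k$ on $\bigotimes \mathcal{T}_k$ really coincides with its run function viewed as a measure on all of $\bigotimes \mathcal{T}$, so that Proposition \ref{prop:func-limit} may be applied in the ambient space. This reduces to the order-preserving nature of the inclusion $\bigotimes \mathcal{T}_k \hookrightarrow \bigotimes \mathcal{T}$, which follows directly from \cref{defn:series-para-ord} since $\Gamma_k$ carries the restricted order from $\Gamma$. Everything else is a routine combination of the finite composition lemmas, the function-limit proposition, and the summability hypothesis.
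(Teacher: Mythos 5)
Your proposal is correct and follows essentially the same route as the paper: truncate to the first $k$ components of an enumeration, apply the finite composition lemmas, and pass to the limit via \cref{prop:func-limit} using monotonicity and total-variation convergence. The only (cosmetic) difference is that the paper realizes the truncations directly on the ambient order $\bigotimes \mathcal{T}$ by replacing the tail measures with zero, which sidesteps the embedding bookkeeping you flag as the main obstacle; your closing remark on unconditional convergence is a small bonus the paper omits.
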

\begin{proof}
Since $ \Gamma $ is countable, let $ \{\gamma_1, \gamma_2, \ldots\} $ be an enumeration for $ \Gamma $.
We will say $ \mu^\otimes = \bigotimes M $ and $ \mu^\oplus = \bigoplus M $.
For every integer $ k \geq 1 $, we define a new collection of measures $ \tilde{M}_k := \{\tilde{\mu}_{k,\gamma}\}_{\gamma \in \Gamma} $ where
\[
\tilde{\mu}_{k,\gamma} = \begin{cases}
\mu_\gamma & \text{ when } \gamma \in \{\gamma_1, \ldots, \gamma_k\} \\
0 & \text{ otherwise}
\end{cases}.
\]
Then, we can define $ \mu^\otimes_k := \bigotimes \tilde{M}_k $ and $ \mu^\oplus_k := \bigoplus \tilde{M}_k $
as finite non-negative measures on $ \bigotimes \mathcal{T} $ and $ \bigoplus \mathcal{T} $, respectively.
Using \cref{lem:fin-series-func} and \cref{lem:fin-para-func}, we know
\[
P_{\mu^\otimes_k}(Z) = \prod_{j=1}^k P_{\mu_{\gamma_j}}(Z) \quad \text{ and } \quad
P_{\mu^\oplus_k}(Z) - 1 = \sum_{j=1}^k (P_{\mu_{\gamma_j}}(Z) - 1).
\]
Now, notice that for $ j \leq k $, $ \mu^\otimes_j \leq \mu^\otimes_k $ and $ \mu^\oplus_j \leq \mu^\oplus_k $ as well as
when $ k \to \infty $,
\[ \norm{\mu^\otimes_k - \mu_k}, \norm{\mu^\oplus_k - \mu_k} \leq \norm{\sum_{j=k+1}^\infty \mu_{\gamma_j}} \to 0. \]
since $ \sum_\gamma \|\mu_\gamma\| < \infty $.
Thus by \cref{prop:func-limit}, for all $ z \in \mathbb{C} $,
\[ \begin{split}
P_{\mu^\otimes}(z) &= \lim_{k \to \infty} P_{\mu^\otimes_k}(z)
= \lim_{k \to \infty} \prod_{j=1}^k P_{\mu_{\gamma_j}}(z)
= \prod_{\gamma \in \Gamma} P_{\mu_\gamma}(z)
\quad \text{ when } |z| < R^\otimes \text{ and} \\
P_{\mu^\oplus}(z) - 1 &= \lim_{k \to \infty} \left[ P_{\mu^\oplus_k}(z) - 1 \right]
= \lim_{k \to \infty} \sum_{j=1}^k \left(P_{\mu_{\gamma_j}}(z) - 1 \right)
= \sum_{\gamma \in \Gamma} \left(P_{\mu_\gamma}(z) - 1 \right)
\quad \text{ when } |z| < R^\oplus.
\end{split} \]
\end{proof}

\section{Run Lengths in Total Orders and Countably Series-Parallel Partial Orders}

First, we will consider the case of a total order with no atoms
and then generalize to the atomic case.

\begin{lem} \label{lem:total-diffuse-func}
Suppose $ T $ is a total order and $ \mu $ is a finite non-negative measure on $ T $.
If $ \mu $ has no atoms then $ \open{P}_\mu(Z) = \closed{P}_\mu(Z) = e^{\|\mu\| Z} $.
\end{lem}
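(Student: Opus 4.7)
The plan is to exploit the fact that a diffuse (atomless) measure on a total order assigns zero probability to ties, so strict and non-strict orderings agree almost everywhere, and by the symmetry of the product measure every permutation type of $n$ points has the same mass.

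First, I would show that $\mu^2(\{a_1 = a_2\}_{T^2}) = 0$ when $\mu$ has no atoms. By Tonelli's theorem, $\mu^2(\{a_1 = a_2\}_{T^2}) = \int \mu(\{x\}) \dr\mu(x)$, and since every singleton has zero $\mu$-measure, the integrand is identically zero. From this, a union bound gives $\mu^n(\{a_i = a_j \text{ for some } i \neq j\}_{T^n}) = 0$. Consequently $\open{L}_n(\mu) = \closed{L}_n(\mu)$, so it suffices to compute either one, and in what follows I just call it $L_n$.

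Next, I would use the symmetry of $\mu^n$ under permutation of coordinates. For each $\sigma \in \Sigma_n$, set $A_\sigma := \{a_{\sigma(1)} < \ldots < a_{\sigma(n)}\}_{T^n}$. Since $T$ is totally ordered, the union $\bigcup_\sigma A_\sigma$ equals $T^n$ minus the zero-measure set of tied $n$-tuples, and the sets $A_\sigma$ are pairwise disjoint. Because $\mu^n$ is invariant under the action of $\Sigma_n$ on coordinates, $\mu^n(A_\sigma) = \mu^n(A_{\mathrm{id}}) = L_n$ for every $\sigma$. Summing gives $n! \cdot L_n = \|\mu\|^n$, hence $L_n = \|\mu\|^n/n!$.

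Finally, substituting into the definition of the run function,
\[
\open{P}_\mu(Z) = \closed{P}_\mu(Z) = \sum_{n=0}^\infty \frac{\|\mu\|^n}{n!} Z^n = e^{\|\mu\| Z},
\]
which holds for every $Z \in \mathbb{C}$ by \cref{lem:func-conv-rad} (the strict series has infinite radius of convergence, and the non-strict series coincides with it here). I do not foresee a real obstacle: the only subtlety is making sure the boundary set where some $a_i = a_j$ is handled correctly, but since that set has $\mu^n$-measure zero, it is harmless both in the partition argument and in identifying the strict and non-strict coefficients.
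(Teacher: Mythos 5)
Your proposal is correct and follows essentially the same route as the paper's proof: Tonelli's theorem to show ties have $\mu^n$-measure zero, a partition of the set of distinct $n$-tuples into the permutation cells $A_\sigma$, and symmetry of the product measure to conclude $n!\,L_n = \|\mu\|^n$. The only cosmetic difference is that you identify $\open{L}_n = \closed{L}_n$ up front via a union bound, whereas the paper handles the non-strict case at the end by noting $\{a_1 \leq \ldots \leq a_n\} \setminus \{a_1 < \ldots < a_n\}$ lies in the null set of tied tuples; the substance is identical.
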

\begin{proof}
Let $ \Sigma_n $ be the set of permutations on $ \{1, \ldots, n\} $.
Then, for every $ \sigma \in \Sigma_n $, we define the subset
\[ A_\sigma := \{a_{\sigma(1)} < \ldots < a_{\sigma(n)}\}_{T^n} \]
so that $ \open{L}_n = \mu^n(A_\sigma) $.
Notice that if $ a_{\sigma(1)} < \ldots < a_{\sigma(n)} $ then no other permutation of $ a_1, \ldots, a_n $
will be strictly ascending so $ A_\sigma $ is disjoint from all other $ A_{\sigma'} $.
Next, we define
\[ D := \{a_1, \ldots, a_n \text{ are pairwise distinct}\}_{T^n}. \]
Because $ T $ is a total order, for any tuple $ a = \langle a_1, \ldots, a_n \rangle \in D $, there is a permutation $ \sigma $
such that $ a_{\sigma(1)} < \ldots < a_{\sigma(n)} $ and therefore $ a \in A_\sigma $.
Conversely, for any $ a \in A_\sigma $, $ a_{\sigma(1)} < \ldots < a_{\sigma(n)} $ so $ a_1, \ldots, a_n $ are distinct and $ a \in D $.
Thus, $ D = \bigcup_\sigma A_\sigma $ and by disjointness
\[
\mu^n(D) = \sum_{\sigma \in \Sigma_n} \mu^n(A_\sigma)
= \sum_{\sigma \in \Sigma_n} \open{L}_n
= |\Sigma_n| \cdot \open{L}_n = n! \cdot \open{L}_n.
\]
Now, for every $ i, j \in \{1, \ldots, n\} $, we define $ E_{i,j} := \{a_i = a_j\}_{T^n} $.
By symmetry, we know $ \mu^n(E_{i,j}) = \mu^n(E_{1,2}) $ and by diffuseness (lack of atoms)
\[
\mu^n(E_{1,2}) = \mu^n(\{a_1 = a_2\}_{T^2} \times T^{n-2}) = \mu^2(\{a_1 = a_2\}_{T^2}) \cdot \mu^{n-2}(T^{n-2})
= \|\mu\|^{n-2} \cdot \int \mu(\{t\}) \dr\mu(t)= 0.
\]
Next, for every $ a = \langle a_1, \ldots, a_n \rangle \in D^c $,
there exist $ i, j \in n $ with $ a_i = a_j $ so $ a \in E_{i,j} $.
Thus,
\[ D^c \subseteq \bigcup_{i,j \in [1,n]} E_{i,j} \qquad \text{ implying } \qquad \mu^n(D^c) \leq \mu^n\left(\bigcup_{i,j} E_{i,j}\right) = 0 \]
and so $ \|\mu\|^n = \mu^n(T^n) = \mu^n(D) + \mu^n(D^c) = n! \cdot \open{L}_n + 0 $
showing that $ \open{L}_n = \frac{\|\mu\|^n}{n!} $.
\medskip

Now, we address the case of $ \closed{L}_n $.
We define $ A = \{a_1 < \ldots < a_n\} $ and $ B = \{a_1 \leq \ldots \leq a_n\} $.
From above, we know $ \mu^n(A) = \frac{\|\mu\|^n}{n!} $.
If $ a = \langle a_1, \ldots, a_n \rangle \in B \setminus A $ then
there must exist $ i \in \{1, \ldots, n\} $ with $ a_i = a_{i+1} $ so $ a \in E_{i,i+1} $.
Thus, $ B \setminus A \subseteq \bigcup_{i,j} E_{i,j} $ and $ \mu^n(B \setminus A) = 0 $.
Therefore, we have
\[ \closed{L}_n = \mu^n(B) = \mu^n(A) + \mu^n(B \setminus A) = \frac{\|\mu\|^n}{n!} + 0. \]
Finally, the run functions will be
\[ \closed{P}_\mu(Z) = \open{P}_\mu(Z) = \sum_{n=0}^\infty \frac{\|\mu\|^n}{n!} Z^n = e^{\|\mu\| Z}. \]
\end{proof}
\bigskip

As before, we assume $ \mathcal{A} $ is the set of atoms of $ \mu $
and that for every $ \alpha \in \mathcal{A} $, the mass of $ \alpha $ is $ m_\alpha = \mu(\{\alpha\}) $.
Note that because $ \sum_{\alpha \in \mathcal{A}} m_\alpha \leq \|\mu\| < \infty $, we must have at most countably many atoms.

\begin{proof}[Proof of \cref{prop:total-run-funcs}]
First, we will prove the result when $ \mathcal{A} $ is finite.
We enumerate the atoms as $ \{\alpha_1, \ldots, \alpha_k\} = \mathcal{A} $.
Because $ T $ is total, we can define $ R_0, \ldots, R_k \subseteq T $ to separate $ T $ into
\[
R_0 \cup \{\alpha_1\} \cup R_1 \cup \{\alpha_2\} \cup \ldots \cup \{\alpha_k\} \cup R_k :=
(-\infty, \alpha_1) \cup \{\alpha_1\} \cup (\alpha_1, \alpha_2) \cup \{\alpha_2\} \cup \ldots \cup \{\alpha_k\} \cup (\alpha_k, \infty) = T.
\]
Treating each section as its own total order, we have $ T = R_0 \otimes \{\alpha_1\} \otimes \ldots \otimes \{\alpha_k\} \otimes R_k $.
Next, we take $ \lambda_0, \ldots, \lambda_k $ to be the restrictions of $ \mu $ to $ R_0, \ldots, R_k $
and $ \delta_1, \ldots, \delta_k $ to be the restrictions of $ \mu $ to $ \{\alpha_1\}, \ldots, \{\alpha_k\} $.
Then, $ \mu = \lambda_0 \otimes \delta_1 \otimes \ldots \otimes \delta_k \otimes \lambda_k $ so by \cref{lem:fin-series-func},
\[ P_\mu(Z) = P_{\lambda_0}(Z) \cdot P_{\delta_1}(Z) \cdots P_{\delta_k}(Z) \cdot P_{\lambda_k}(Z). \]
Because $ R_0 \cup \ldots \cup R_k \subseteq T \setminus \mathcal{A} $,
we know $ R_0 \cup \ldots \cup R_k $ will contain no atoms of $ \mu $.
Thus, every one of $ \lambda_0, \ldots, \lambda_k $ must be diffuse and so \cref{lem:total-diffuse-func}
tells us that $ P_{\lambda_i}(Z) = e^{\|\lambda_i\| Z} = e^{\mu(R_i) Z} $ for all $ 0 \leq i \leq k $.
\bigskip

Next, let $ 1 \leq i \leq k $ and we will consider $ \delta_i $.
For the strict case,
we know $ \open{L}_0(\delta_i) = 1 $ and $ \open{L}_1(\delta_i) = \|\delta_i\| = \mu(\{\alpha_i\}) = m_{\alpha_i} $.
Then, for all $ n \geq 2 $, because $ \{\alpha_i\} $ does not have $ n $ distinct elements,
\[ \open{L}_n(\delta_i) = \delta_i^n(\{a_1 < \ldots < a_n\}_{\{\alpha_i\}^n}) = 0. \]
Thus, $ \open{P}_{\delta_i}(Z) = 1 + m_{\alpha_i} Z $.
Next, for the non-strict case,
we have $ \closed{L}_0(\delta_i) = 1 $ and for all $ n \geq 1 $, all elements of $ \{\alpha_i\} $ are equal so
\[
\closed{L}_n(\delta_i) = \delta_i^n(\{a_1 \leq \ldots \leq a_n\}_{\{\alpha_i\}^n})
= \delta_i^n(\{\alpha_i\}^n) = \|\delta_i\|^n = m_{\alpha_i}^n.
\]
Thus, $ \closed{P}_{\delta_i}(Z) = \sum_{n=0}^\infty m_{\alpha_i}^n Z^n = \frac{1}{1 - m_{\alpha_i} Z} $.
Finally, using these results in the above product, we get
\[ \begin{split}
\closed{P}_\mu(Z) &= e^{\mu(S_0) Z} \cdot \frac{1}{1 - m_{\alpha_1} Z} \cdots \frac{1}{1 - m_{\alpha_k} Z} \cdot e^{\mu(S_k) Z}
= e^{\mu(S_0 \cup \ldots \cup S_k) Z} \cdot \prod_{i=1}^k \frac{1}{1 - m_{\alpha_i} Z}
= e^{m_d Z} \prod_{\alpha \in \mathcal{A}} \frac{1}{1 - m_\alpha Z}
\quad \text{ and} \\
\open{P}_\mu(Z) &= e^{\mu(S_0) Z} \cdot (1 + m_{\alpha_1} Z) \cdots (1 + m_{\alpha_k} Z) \cdot e^{\mu(S_k) Z}
= e^{\mu(S_0 \cup \ldots \cup S_k) Z} \cdot \prod_{i=1}^k (1 + m_{\alpha_i} Z)
= e^{m_d Z} \prod_{\alpha \in \mathcal{A}} (1 + m_\alpha Z).
\end{split} \]
which proves the result in the finite case.
\bigskip

Second, we will extend our result to countable atoms using a limit.
We enumerate the atoms as $ \{\alpha_1, \alpha_2, \ldots\} = \mathcal{A} $.
For each $ k \geq 1 $, we define the measurable set $ F_k = T \setminus \{\alpha_k, \alpha_{k+1}, \ldots\} $.
Next, we define $ \mu_k = \mu \big|_{F_k} $ so that $ \mu_k $ is the restriction of $ \mu $ to $ F_k $.
Then, $ \mu_k $ will have exactly the atoms $ \{\alpha_1, \ldots, \alpha_{k-1}\} $.
So the diffuse mass of $ \mu_k $ will be
\[
\|\mu_k\| - \sum_{i=1}^{k-1} m_{\alpha_i} = \|\mu\| - \sum_{i=k}^\infty m_{\alpha_i} - \sum_{i=1}^{k-1} m_{\alpha_i}
= \|\mu\| - \sum_{i=1}^\infty m_{\alpha_i} = m_d
\]
where $ m_d $ is the diffuse mass of $ \mu $.
Using the results for the finite case, we have
\[
\open{P}_{\mu_k}(Z) = e^{m_d Z} \prod_{i=1}^{k-1} (1 + m_{\alpha_i} Z)
\quad \text{ and } \quad
\closed{P}_{\mu_k}(Z) = e^{m_d Z} \prod_{i=1}^{k-1} \frac{1}{1 - m_{\alpha_i} Z}.
\]
Next, we observe that as $ k \to \infty $
\[
\|\mu - \mu_k\| = \left\|\mu \big|_{F_k^c} + \mu \big|_{F_k} - \mu \big|_{F_k} \right\|
= \left\| \mu \big|_{F_k^c} \right\| = \mu(F_k^c) = \sum_{i=k}^\infty m_{\alpha_i} \to 0
\]
so $ \mu_k \to \mu $.
Because $ \{F_k\}_{k=1}^\infty $ is an increasing sequence of sets,
we know $ \{\mu_k\}_{k=1}^\infty $ is an increasing sequence of measures.
From \cref{lem:func-conv-rad}, we know that the radius of convergence of $ \open{P}_\mu(Z) $ is infinite
and the radius of convergence of $ \closed{P}_\mu(Z) $ is greater than $ \frac{1}{\|\mu\|} $.
Thus by \cref{prop:func-limit}, for all $ z \in \mathbb{C} $,
\[ \begin{split}
\open{P}_\mu(z) &= \lim_{k \to \infty} \open{P}_{\mu_k}(z)
= \lim_{k \to \infty} e^{m_d z} \prod_{i=1}^{k-1} (1 + m_{\alpha_i} z)
= e^{m_d z} \prod_{\alpha \in \mathcal{A}} (1 + m_\alpha z)
\quad \text{ and} \\
\text{if } |z| \leq \frac{1}{\|\mu\|} \text{ then } \quad
\closed{P}_\mu(z) &= \lim_{k \to \infty} \closed{P}_{\mu_k}(z)
= \lim_{k \to \infty} e^{m_d z} \prod_{i=1}^{k-1} \frac{1}{1 - m_{\alpha_i} z}
= e^{m_d z} \prod_{\alpha \in \mathcal{A}} \frac{1}{1 - m_\alpha z}.
\end{split} \]
\end{proof}

\begin{lem} \label{lem:total-coef2}
For any finite non-negative measure $ \mu $ on a total order,
\begin{equation} \label{eq:coef2}
\open{P}_\mu''(0) = \|\mu\|^2 - \sum_{\alpha \in \mathcal{A}} m_\alpha^2 \quad \text{ and } \quad
\closed{P}_\mu''(0) = \|\mu\|^2 + \sum_{\alpha \in \mathcal{A}} m_\alpha^2.
\end{equation}
\end{lem}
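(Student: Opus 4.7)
The plan is to reduce the statement to computing $L_2(\mu)$, since the power series definition of $P_\mu(Z)$ gives $P_\mu''(0) = 2 L_2(\mu)$ directly, so both equations in \cref{eq:coef2} will follow once we have closed-form expressions for $\open{L}_2(\mu)$ and $\closed{L}_2(\mu)$.

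First I would split $T^2$ into the three measurable pieces $\{a_1 < a_2\}$, $\{a_1 = a_2\}$, and $\{a_1 > a_2\}$. Because $T$ is totally ordered, these partition $T^2$ (up to measurability concerns, which are handled by the standing assumption that $\{a_1 < a_2\}$ is measurable). By symmetry of the product measure under swapping coordinates, $\mu^2(\{a_1 < a_2\}) = \mu^2(\{a_1 > a_2\})$, so
\[
\|\mu\|^2 \;=\; \mu^2(T^2) \;=\; 2\,\open{L}_2(\mu) \;+\; \mu^2(\{a_1 = a_2\}).
\]

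Next I would compute $\mu^2(\{a_1 = a_2\})$ using Tonelli: it equals $\int_T \mu(\{t\})\dr\mu(t)$. The integrand $t \mapsto \mu(\{t\})$ vanishes off the countable set $\mathcal{A}$ of atoms (so the diffuse part of $\mu$ contributes nothing), leaving $\sum_{\alpha \in \mathcal{A}} m_\alpha \cdot m_\alpha = \sum_{\alpha \in \mathcal{A}} m_\alpha^2$. Substituting gives $\open{L}_2(\mu) = \tfrac{1}{2}\bigl(\|\mu\|^2 - \sum_\alpha m_\alpha^2\bigr)$, so $\open{P}_\mu''(0) = \|\mu\|^2 - \sum_\alpha m_\alpha^2$ as desired. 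For the non-strict case, observe that $\{a_1 \leq a_2\} = \{a_1 < a_2\} \sqcup \{a_1 = a_2\}$, giving $\closed{L}_2(\mu) = \open{L}_2(\mu) + \sum_\alpha m_\alpha^2 = \tfrac{1}{2}\bigl(\|\mu\|^2 + \sum_\alpha m_\alpha^2\bigr)$, hence $\closed{P}_\mu''(0) = \|\mu\|^2 + \sum_\alpha m_\alpha^2$.

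There is no serious obstacle here; the only point that requires mild care is verifying $\mu^2(\{a_1 = a_2\}) = \sum_\alpha m_\alpha^2$, for which Tonelli's theorem together with the fact that $t \mapsto \mu(\{t\})$ is supported on the countable atomic set $\mathcal{A}$ suffices. (As an alternative sanity check, one could differentiate the explicit product formulae from \cref{prop:total-run-funcs} twice at $z = 0$ via a logarithmic derivative and recover the same expressions, but the direct combinatorial route above is shorter and does not invoke \cref{prop:total-run-funcs}.)
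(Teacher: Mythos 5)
Your proof is correct, but it takes a genuinely different route from the paper. The paper proves this lemma by differentiating the explicit product formulae of \cref{prop:total-run-funcs} twice: it sets $ \open{Q}(z) = \prod_\alpha (1 + m_\alpha z) $ and $ \closed{Q}(z) = \prod_\alpha \frac{1}{1 - m_\alpha z} $, computes $ Q'(0) $ and $ Q''(0) $ by expanding the derivatives of the infinite products, and then assembles $ P''_\mu(0) = m_d^2 Q(0) + 2 m_d Q'(0) + Q''(0) $. Your argument instead uses only the identity $ P''_\mu(0) = 2 L_2(\mu) $ (which the paper itself records in the proof of \cref{coro:length-stats}) and computes $ L_2 $ directly from the trichotomy $ T^2 = \{a_1 < a_2\} \sqcup \{a_1 = a_2\} \sqcup \{a_1 > a_2\} $, the swap symmetry $ \mu^2(\{a_1 < a_2\}) = \mu^2(\{a_1 > a_2\}) $, and the Tonelli computation $ \mu^2(\{a_1 = a_2\}) = \int \mu(\{t\}) \dr\mu(t) = \sum_\alpha m_\alpha^2 $ --- all ingredients the paper already deploys in \cref{lem:func-conv-rad} and \cref{lem:total-diffuse-func}. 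What your route buys: it is shorter, avoids justifying term-by-term differentiation of an infinite product, and does not depend on \cref{prop:total-run-funcs} at all (in particular it covers the degenerate case without comment, whereas \cref{prop:total-run-funcs} is stated with a non-degeneracy hypothesis). What the paper's route buys: it reuses machinery it needs anyway (the derivatives $ Q'(1) $, $ Q''(1) $ of the same products reappear in the variance computations for \cref{coro:total-length-stats}), so the computation is not wasted there. The only point worth making explicit in your write-up is the measurability of $ \{a_1 = a_2\} = \{a_1 \leq a_2\} \cap \{a_1 \geq a_2\} $ and of $ t \mapsto \mu(\{t\}) $, which follows from the paper's standing assumption and Tonelli exactly as in \cref{lem:func-conv-rad}.
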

\begin{proof}
We define $ \theta := \sum_{\alpha \in \mathcal{A}} m_\alpha^2 $.
From \cref{prop:total-run-funcs}, we know
\[ \open{P}_\mu(z) = \sum_{n=0}^\infty \open{L}_n z^n = e^{m_d z} \prod_{\alpha \in \mathcal{A}} (1 + m_\alpha z) \]
for all $ z \in \mathbb{C} $ and the product converges uniformly on bounded sets.
So we define $ \open{Q}(z) := \prod_{\alpha \in \mathcal{A}} (1 + m_\alpha z) $
and take the derivative
\[
\open{Q}'(z) = \sum_{\alpha \in \mathcal{A}} m_\alpha \prod_{\substack{\beta \in \mathcal{A} \\ \beta \neq \alpha}} (1 + m_\beta z)
\quad \text{ so the constant term is } \quad
\open{Q}'(0) = \sum_{\alpha \in \mathcal{A}} m_\alpha = \|\mu\| - m_d.
\]
The second derivative and its constant term will be
\[ \begin{split}
\open{Q}''(z) &= \sum_{\substack{\alpha, \beta \in \mathcal{A} \\ \alpha \neq \beta}} m_\alpha m_\beta
\prod_{\substack{\gamma \in \mathcal{A} \\ \gamma \neq \alpha, \beta}} (1 + m_\gamma z) \quad \text{ and} \\
\open{Q}''(0) &= \sum_{\substack{\alpha, \beta \in \mathcal{A} \\ \alpha \neq \beta}} m_\alpha m_\beta
= \sum_{\alpha, \beta \in \mathcal{A}} m_\alpha m_\beta - \sum_{\substack{\alpha, \beta \in \mathcal{A} \\ \alpha = \beta}} m_\alpha m_\beta \\
&= \left(\sum_{\alpha \in \mathcal{A}} m_\alpha\right)^2 - \sum_{\alpha \in \mathcal{A}} m_\alpha^2 = (\|\mu\| - m_d)^2 - \theta.
\end{split} \]
Thus, the second derivative of $ \open{P}_\mu $ at zero is
\[ \begin{split}
\open{P}''_\mu(0) &= m_d^2 e^{m_d \cdot 0} \; \open{Q}(0) + 2 m_d e^{m_d \cdot 0} \; \open{Q}'(0) + e^{m_d \cdot 0} \; \open{Q}''(0) \\
&= m_d^2 + 2 m_d (\|\mu\| - m_d) + (\|\mu\| - m_d)^2 - \theta \\
&= m_d^2 + 2 \|\mu\| m_d - 2 m_d^2 + \|\mu\|^2 - 2 \|\mu\| m_d + m_d^2 - \theta = \|\mu\|^2 - \theta.
\end{split} \]
proving the first part of \cref{eq:coef2}.
\bigskip

Now, we will address the non-strict case.
We define $ \closed{Q}(z) := \prod_{\alpha \in \mathcal{A}} \frac{1}{1 - m_\alpha z} $
for $ z \in \mathbb{C} $ sufficiently close to zero and the first derivative near zero is
\[
\closed{Q}'(z) = \sum_{\alpha \in \mathcal{A}} \frac{m_\alpha}{(1 - m_\alpha z)^2} \prod_{\substack{\beta \in \mathcal{A} \\ \beta \neq \alpha}} \frac{1}{1 - m_\beta z}
\quad \text{ so the constant term is } \quad
\closed{Q}'(0) = \sum_{\alpha \in \mathcal{A}} \frac{m_\alpha}{(1 - 0)^2} = \|\mu\| - m_d.
\]
For the second derivative, we may differentiate the $ \alpha $ term again or differentiate a different term
\[
\closed{Q}''(z) = \sum_{\alpha \in \mathcal{A}} \left[
\frac{2 m_\alpha^2}{(1 - m_\alpha z)^3} \prod_{\substack{\beta \in \mathcal{A} \\ \beta \neq \alpha}} \frac{1}{1 - m_\beta z}
+ \sum_{\substack{\beta \in \mathcal{A} \\ \beta \neq \alpha}} \frac{m_\alpha m_\beta}{(1 - m_\alpha z)^2 (1 - m_\beta z)^2}
\prod_{\substack{\gamma \in \mathcal{A} \\ \gamma \neq \alpha, \beta}} \frac{1}{1 - m_\gamma z}
\right]
\]
and its constant term will be
\[ \begin{split}
\closed{Q}''(0) &= \sum_{\alpha \in \mathcal{A}} \left[ \frac{2 m_\alpha^2}{(1 - 0)^3}
+ \sum_{\substack{\beta \in \mathcal{A} \\ \beta \neq \alpha}} \frac{m_\alpha m_\beta}{(1 - 0)^2 (1 - 0)^2} \right]
= 2 \left(\sum_{\alpha \in \mathcal{A}} m_\alpha^2\right) + \left(\sum_{\substack{\alpha, \beta \in \mathcal{A} \\ \alpha \neq \beta}} m_\alpha m_\beta \right) \\
&= \sum_{\alpha \in \mathcal{A}} m_\alpha^2 + \sum_{\alpha, \beta \in \mathcal{A}} m_\alpha m_\beta = \theta + (\|\mu\| - m_d)^2.
\end{split} \]
Hence, evaluating $ \closed{P}''_\mu(0) $ we get that
\[ \begin{split}
\closed{P}''_\mu(0) &= m_d^2 e^{m_d \cdot 0} \; \closed{Q}(0) + 2 m_d e^{m_d \cdot 0} \; \closed{Q}'(0) + e^{m_d \cdot 0} \; \closed{Q}''(0) \\
&= m_d^2 + 2 m_d (\|\mu\| - m_d) + \theta + (\|\mu\| - m_d)^2 \\
&= m_d^2 + 2 \|\mu\| m_d - 2 m_d^2 + \theta + \|\mu\|^2 - 2 \|\mu\| m_d + m_d^2 = \|\mu\|^2 + \theta
\end{split} \]
proving the second part of \cref{eq:coef2}.
\end{proof}

\begin{proof}[Proof of \cref{thm:total-length-pgf} and \cref{coro:total-length-stats}]
It simply remains to apply \cref{thm:length-pgf} and \cref{coro:length-stats}
to the results from \cref{prop:total-run-funcs} and \cref{lem:total-coef2}.
Here, we are assuming $ \|\mu\| = 1 $.
We get the PGFs by substituting \cref{eq:total-run-funcs} into \cref{eq:init-length-pgf} and \cref{eq:length-pgf}.
The expected values of $ \open{N}_0 $ and $ \closed{N}_0 $
follow immediately from applying \cref{eq:init-length-mean}
to the formulae in \cref{prop:total-run-funcs}.
For $ i \geq 1 $, we get $ \expectWhen{\open{N}_i}{\open{S}_i} $ and $ \expectWhen{\closed{N}_i}{\closed{S}_i} $
from applying \cref{eq:length-mean} to \cref{lem:total-coef2}.
\medskip

Next, we will prove the formulae for the variances.
For $ z $ sufficiently close to one, the derivative of $ P_\mu(z) $ is
\[ \begin{split}
\open{P}_\mu'(z) &= m_d e^{m_d z} \prod_{\alpha \in \mathcal{A}} (1 + m_\alpha z)
+ e^{m_d z} \sum_{\alpha \in \mathcal{A}} m_\alpha
\prod_{\substack{\beta \in \mathcal{A} \\ \beta \neq \alpha}} (1 + m_\beta z)
= \left(e^{m_d z} \prod_{\alpha \in \mathcal{A}} (1 + m_\alpha z)\right)
\left(m_d + \sum_{\alpha \in \mathcal{A}} \frac{m_\alpha}{1 + m_\alpha z} \right)
\quad \text{ and} \\
\closed{P}_\mu'(z) &= m_d e^{m_d z} \prod_{\alpha \in \mathcal{A}} \frac{1}{1 - m_\alpha z}
+ e^{m_d z} \sum_{\alpha \in \mathcal{A}} \frac{m_\alpha}{(1 - m_\alpha z)^2}
\prod_{\substack{\beta \in \mathcal{A} \\ \beta \neq \alpha}} \frac{1}{1 - m_\beta z}
= \left(e^{m_d z} \prod_{\alpha \in \mathcal{A}} \frac{1}{1 - m_\alpha z}\right)
\left(m_d + \sum_{\alpha \in \mathcal{A}} \frac{m_\alpha}{1 - m_\alpha z} \right).
\end{split} \]
Then, the derivatives at one are
\[ \begin{split}
\open{P}_\mu'(1)
&= \open{P}_\mu(1) \left(1 - \sum_{\alpha \in \mathcal{A}} m_\alpha + \sum_{\alpha \in \mathcal{A}} \frac{m_\alpha}{1 + m_\alpha} \right)
= \open{P}_\mu(1) \left(1 + \sum_{\alpha \in \mathcal{A}} \left( \frac{m_\alpha}{1 + m_\alpha} - m_\alpha\right) \right)
= \open{P}_\mu(1) \left(1 + \sum_{\alpha \in \mathcal{A}} \frac{-m_\alpha^2}{1 + m_\alpha} \right)
\quad \text{ and} \\
\closed{P}_\mu'(1)
&= \closed{P}_\mu(1) \left(1 - \sum_{\alpha \in \mathcal{A}} m_\alpha + \sum_{\alpha \in \mathcal{A}} \frac{m_\alpha}{1 - m_\alpha} \right)
= \closed{P}_\mu(1) \left(1 + \sum_{\alpha \in \mathcal{A}} \left( \frac{m_\alpha}{1 - m_\alpha} - m_\alpha\right) \right)
= \closed{P}_\mu(1) \left(1 + \sum_{\alpha \in \mathcal{A}} \frac{m_\alpha^2}{1 - m_\alpha} \right).
\end{split} \]
Then, the variances will be
\[ \begin{split}
\var{\open{N}_0} &= \open{P}_\mu(1) - \open{P}_\mu(1)^2 + 2 \open{P}_\mu'(1)
= \left(e^{m_d} \prod_{\alpha \in \mathcal{A}} (1 + m_\alpha) \right)
\left(3 - e^{m_d} \prod_{\alpha \in \mathcal{A}} (1 + m_\alpha) - 2 \sum_{\alpha \in \mathcal{A}} \frac{m_\alpha^2}{1 + m_\alpha} \right)
\quad \text{ and} \\
\var{\closed{N}_0} &= \closed{P}_\mu(1) - \closed{P}_\mu(1)^2 + 2 \closed{P}_\mu'(1)
= \left(e^{m_d} \prod_{\alpha \in \mathcal{A}} \frac{1}{1 - m_\alpha}\right)
\left(3 - e^{m_d} \prod_{\alpha \in \mathcal{A}} \frac{1}{1 - m_\alpha} + 2 \sum_{\alpha \in \mathcal{A}} \frac{m_\alpha^2}{1 - m_\alpha} \right).
\end{split} \]
For $ i \geq 1 $, we can simply substitute our formulae for $ P_\mu''(0) $ and $ P_\mu(1) $ into \cref{eq:length-var}.
Again by substitution, we get \cref{eq:total-init-length-pgf} and \cref{eq:total-length-pgf}.
\end{proof}

\begin{proof}[Proof of \cref{thm:series-parallel}]
Suppose $ \mu $ is a finite non-negative measure on a countably series-parallel partial order $ T $.
If $ T $ is a total order then apply \cref{prop:total-run-funcs} to get the run functions.
Otherwise, $ T $ is a countable series or parallel composition.
So there exists a collection $ \{T_\gamma\}_{\gamma \in \Gamma} $ with countable $ \Gamma $
where $ T = \bigotimes_\gamma T_\gamma $ or $ T = \bigoplus_\gamma T_\gamma $.
Because $ \Gamma $ is countable, we can use $ \sigma $-additivity to say
for any measurable $ E $,
\[
\sum_{\gamma \in \Gamma} \left( \mu \big|_{T_\gamma} \right)(E) =
\sum_{\gamma \in \Gamma} \mu(E \cap T_\gamma) = \mu(E).
\]
Thus, $ \mu = \bigotimes_\gamma \mu \big|_{T_\gamma} $ when $ T $ is a series composition
and $ \mu = \bigoplus_\gamma \mu \big|_{T_\gamma} $ when $ T $ is a parallel composition.
Therefore, by \cref{prop:comb-funcs},
\[ \begin{split}
P_\mu(z) &= \prod_\gamma P_{\mu |_{T_\gamma}}(z) \; \text{ when $ T $ is a series composition and} \\
P_\mu(z) - 1 &= \sum_\gamma \left( P_{\mu |_{T_\gamma}}(z) - 1 \right) \; \text{ when $ T $ is a parallel composition}
\end{split} \]
for all $ z \in \mathbb{C} $ within the radius of convergence.
\end{proof}

\filbreak
\providecommand{\bysame}{\leavevmode\hbox to3em{\hrulefill}\thinspace}
\providecommand{\MR}{\relax\ifhmode\unskip\space\fi MR }
\providecommand{\MRhref}[2]{%
  \href{http://www.ams.org/mathscinet-getitem?mr=#1}{#2}
}
\providecommand{\href}[2]{#2}

\end{document}